\newcommand{\RR}{{\mathbb R}}
\newcommand{\CC}{{\mathbb C}}
\newcommand{\be}{\begin{equation}}
\newcommand{\ee}{\end{equation}}
\newcommand{\ba}{\begin{array}}
\newcommand{\ea}{\end{array}}
\newcommand{\bea}{\begin{eqnarray}}
\newcommand{\eea}{\end{eqnarray}}
\newtheorem{theorem}{Theorem} 
\newtheorem{lemma}{Lemma}[section]
\newtheorem{proposition}{Proposition}[section]
\newtheorem{definition}{Definition}
\DeclareMathOperator{\sgn}{sgn}
\numberwithin{equation}{section}
\numberwithin{table}{section}
\begin{document}
 
\begin{center}
{\bf{\LARGE Long time asymptotics for the KPII equation}}\\

\end{center}
\vskip.2in
\begin{center}
   \textrm{\large Derchyi Wu} \\
 \textrm{Institute of Mathematics}\\
\textrm{Academia Sinica, Taipei, Taiwan} \\
{\today}
\end{center}  
\medskip

\vskip 10pt
{\bf ABSTRACT}

\begin{enumerate}
\item[]{\small  The long-time asymptotics of small  Kadomtsev-Petviashvili II (KPII) solutions is derived using the inverse scattering theory and the stationary phase method.}

\end{enumerate}

{\tableofcontents}
\section{Introduction}\label{S:Intro}

The Kadomtsev-Petviashvili II (KPII) equation 
\be\label{E:KPII-intro}
(-4u_{x_3}+u_{x_1x_1x_1}+6uu_{x_1})_{x_1}+3u_{{x_2}{x_2}}=0     
\ee plays a significant role in plasma physics, water waves, and various other areas of mathematical physics. As one of the few physically relevant integrable systems in multiple spatial dimensions, the KPII equation has been the focus of extensive research. In particular, its global well-posedness and stability properties have been investigated through both partial differential equation (PDE) methods and the inverse scattering theory (IST). For a comprehensive overview of these developments, we refer the reader to the monograph by Klein and Saut \cite{KS21}. 

 Despite this progress, a complete description of the long-time behavior of KPII solutions remains largely open. Using PDE methods, the asymptotic behavior of small solutions to generalized KPII equations,  excluding the KPII equation itself, has been investigated in works   such as \cite{HNS99, N11}. {\color{black}  In addition, the long-time asymptotics of   the $x_1$-derivative of  KPII solutions were studied in \cite{HN14}}.  On the other hand, Kiselev formally derived the long-time $\color{black} o(t^{-1})$ behavior of small KPII solutions using the IST \cite{Ki06}. However, his analysis relies on non-physical and non-generic assumptions, particularly the integrability of $(1+|\lambda |)  s_c$  and boundedness of $\partial_{\lambda_I}s_c,\, \partial^2_{\lambda_I}s_c$. Since the Lax operator associated with the KPII  is the heat operator, 
  the scattering data $s_c$ is naturally differentiable and decaying in $(\overline\lambda-\lambda,\overline\lambda^2-\lambda^2)$,  and the associated eigenfunction $m(x,\lambda)$ depends nontrivially  on the entire $\color{black}\lambda$-complex plane. 
 As a result, the assumptions imposed by Kiselev lead to highly degenerate scattering data along the real axis $\lambda_I=0$.

The goal of this paper is to rigorously establish the large-time asymptotic behavior of  small solutions to the KPII equation, without imposing any non-physical assumptions. Our approach is based on IST \cite{W87}, the representation formula \eqref{E:intro-Lax} for the KPII solution $u$, 
\begin{align}
u(x )=&\ u_1(x)+u_{2,0}(x)+ u_{2,1}(x),\quad x=(x_1,x_2,x_3),\label{E:rep-dec}\\
u_1(x )=& -\frac 1{\pi i}\partial_{x_1}\iint e^{2\pi i tS_0 }  \widetilde {  s}_c(\lambda ' )    \ d\overline\lambda'\wedge d\lambda',\label{E:rep-dec-1}\\
u_{2,0}(x)=&-\dfrac 1{\pi i} \iint  e^{2\pi i tS_0 } \widetilde {  s}_c(\lambda'  )      (\overline\lambda'-\lambda')(\widetilde m(x,{\color{black}\overline\lambda'})-1)   \ d\overline\lambda'\wedge d\lambda',\label{E:rep-dec-20}\\
u_{2,1}(x)=&
 -\dfrac 1{\pi i} \iint e^{2\pi i tS_0 }  \widetilde {  s}_c(\lambda'  )\partial_{x_1}\widetilde m(x,{\color{black}\overline\lambda'})   \ d\overline\lambda'\wedge d\lambda' , \label{E:rep-dec-u21}
\end{align}
 novel representation formulas for the Cauchy  integrals (see Lemmas \ref{L:CIO}, \ref{L:dCIO}, and \ref{L:dCIO-u21}), and the stationary phase method \cite{DLP25}. We eliminate  non-physical conditions by performing integration by parts with respect to  $\lambda'_I$    or $\xi_h''$   in regimes where   $|\lambda_R'|< C$ or $|\lambda_I'|>1/ C$, and by carefully exploiting the factor  $(\overline\lambda'-\lambda')$ or $(\xi_h''-\xi_{h+1}'')$, which arise from taking the $x_1$-derivative in the representation formulas \eqref{E:rep-dec-20} or \eqref{E:rep-dec-u21}, in regimes where   $|\lambda_R'|>1/C$ or $|\lambda_I'|<1/ C$. See {\color{black}Appendix}  \ref{App} for the definitions of $  \widetilde {  s}_c,\,\widetilde m, \,S_0 $, $C$, $\lambda' $, $  \lambda_R',\,\lambda_I'$, and $\xi_h''$.

Our main result is as follows:
\begin{theorem}\label{T:u} Let $a=\pm 3r^2=\frac{x_2^2-3x_1x_3}{3x_3^2}$, $r>0$,  $t=-x_3$, $ {|x_2|}/t <C $, $0<\delta $,   {\color{black}and $0<\epsilon\ll 1 $}. Suppose  
\be\label{intro-ID}\sum_{ l_1+l_2 \le { \color{black}8 }} |\partial_{x_1 }^{l_1 }\partial_{x_2 }^{l_2 } (1+|x_1|+|x_2|)^3u_0(x_1,x_2) |_  {L^1\cap L^2 }<\infty,\quad |u_0 (x_1,x_2)|_  {L^1\cap L^2 }{\color{black}<} 1.\ee Then,  the solution $u $ to the Cauchy problem for \eqref{E:KPII-intro} with initial data $ u_0$ satisfies : as $t\to +\infty$,
\begin{itemize}
\item [$\blacktriangleright$] For $\color{black}a >+\delta>0$, 
\[u _1(x )\sim      o( t^{-1 }), \qquad 
       u _{2,0}(x ),\ u _{2,1}(x )\sim      \mathcal O( t^{-1 }).  
  \]
  \item [$\blacktriangleright$] For $\color{black}a <-\delta<0$,
\begin{gather*}
   u_1 (x )\sim  \frac{2  i e^{i 4\pi     t  r^3    
  }}{3t  }     s_c (-\frac{x_2}{3x_3}+ ir   )
-\frac{2  ie^{-i   4\pi t  r^3    
  }}{ 3 t   }     s_c( -\frac{x_2}{3x_3}- ir  )
 + o( t^{-1 }), \\
 u _{2,0}(x ),\  u _{2,1}(x )\sim      {\color{black}o_\epsilon( t^{-11/12+\epsilon })}.
  \end{gather*}
\end{itemize}
 Here, $s_c(\lambda)$ denotes the scattering data of $u_0$, $a$ characterizes the stationary points of the phase function,  and $t$ corresponds to the direction of KPII propagation. {\color{black}Finally, $\mathcal O_\epsilon,\,o_\epsilon$ denote  convergences that depend  on $\epsilon$, whereas $\mathcal{O}$ and $o$ do not.} 
\end{theorem}
 The proof follows from Theorems \ref{T:u-1}-\ref{T:-u-2-u21}, which are established in the subsequent sections. {\color{black}Owing to (i) the lack of efficient estimates for higher derivatives of the Cauchy integrals, and (ii) the fact that, regardless of how small the integration region is, the first derivatives of the Cauchy integrals admit at best an $\mathcal{O}(1)$ bound, the $\mathcal{O}(t^{-1})$ and $o_\epsilon \left(t^{-11/12+\epsilon}\right)$ estimates for $u_{2,0}$ and $u_{2,1}$ for $a\gtrless \pm\delta\gtrless  0$ are optimal within our approach.} Whether $o(t^{-1})$ estimates hold for these terms, for generic initial data $u_0$ satisfying the assumptions of Theorem \ref{T:u}, remains an open question. For comparison, in the asymptotic theory of the KPI equation \cite{DLP25}, a $\frac{\pi}{2}$-phase shift is obtained. Moreover,  one derives an $o(t^{-1})$    and an $\mathcal{O}(t^{-1})$ estimates for $ u_{2,0}$ and $u_{2,1}$ for $a \gtrless \pm\delta \gtrless 0$. These results rely on distinct analytical features: the associated Lax operator is the Schrödinger operator, the phase function is antisymmetric in $k, l$, the scattering data lies in Sobolev spaces in $l$, and the eigenfunction $m$ depends only on $k \in \mathbb{R}$. 


The paper is organized as follows. In Section \ref{S:Pre}, we present preliminary materials, including the IST for the KPII equation and an introduction to the stationary phase method. 

In Section \ref{S:sigma--}, we first establish the $\lambda'$-derivative estimates for the scattering data, which, together with Theorem \ref{T:u}, form the basis of the asymptotic analysis. We then derive the asymptotic behavior of $u_1$ by applying the stationary phase method near the stationary points and using integration by parts away from them.

In Section \ref{S:m}, we derive  new representation formulas for the Cauchy integrals $\ \widetilde{\left(\mathcal CT\right)^n1}$. Based on these formulas, we establish $L^\infty$-estimates for the Cauchy integrals and their derivatives and make a reduction for analyzing the asymptotics of $u_{2,0}$, as detailed in Subsection  \ref{SS:CI-0}.   

To illustrate the structure of the new formulas, we note that $\widetilde{ \mathcal CT 1}$ is a triple integral over the spatial variables $(x_1',x_2')$ and the spectral variable $\xi_1''$. The $(x_1',x_2')$-integral is well-behaved under sufficient regularity of the initial data $u_0 $. The $\xi_1''$-integral features an oscillatory Airy-type propagator $e^{2\pi i t \mathfrak G}$,  multiplied by a bounded exponential amplitude function $\mathcal F$. Consequently, the asymptotic behavior of the Cauchy integrals $\ \widetilde{\left(\mathcal CT\right)^n1}$ is obtained by applying the stationary phase method to the propagator $e^{2\pi i t \mathfrak G }$, and analyzing the singularities of the amplitude $\mathcal F$, where decay may fail.

In Subsection \ref{SS:u+2} and \ref{SS:u-2}, we determine asymptotic behavior of $u_{2,0}$ in the regimes $a\gtrless \pm\delta\gtrless  0$,  respectively. This is achieved by refining the decomposition of the representation formulas, establishing the integrability of $\partial_{\lambda'_I}\widetilde s_c$ or $(1+|\lambda'|)\widetilde s_c$ in various regimes, discarding terms with rapidly decaying amplitudes, and using several key tools:  smallness of the integration domains,  the   factor    $(\overline\lambda'-\lambda')$, 
integration by parts, and the estimates developed in Subsection \ref{SS:CI-0}.

In Section \ref{S:m-1}, we adapt the approach from Section \ref{S:m} to investigate the Cauchy integrals  $\partial_{x_1} 
 \widetilde{\left(\mathcal CT\right)^n1}$ and derive the asymptotic behavior of $u_{2,1}$.  To facilitate integration by parts without imposing additional conditions on $\partial_{\lambda'_I}\widetilde s_c$ and $(1+|\lambda'|)\widetilde s_c$, particular care is needed, and the argument becomes more involved.
 
In Appendices A and B, we provide a key estimate used in the derivation of the new representation formulas, along with a list of symbols used throughout the paper.

{\bf Acknowledgments}.  I thank Jean-Claude Saut for suggesting the asymptotic problem for the KP equations. I am also grateful to Jiaqi Liu for insightful discussions that led to new representation formulas for the Cauchy integrals, and {\color{black}to Barbara Prinari for thoroughly reading and discussing the manuscript, as well as for pointing out several sharp estimates. I further thank Theodoros Horikis and the Department of Mathematics at the University of Ioannina for their warm hospitality.} This work was supported by NSC 113-2115-M-001-007-. 

\section{Preliminaries}\label{S:Pre}

\subsection{The IST for KPII equations}\label{SS:KP-IST}Denote $x=(x_1,x_2,x_3)$, $l=(l_1,l_2,l_3)$, $\partial_x^l=\partial_{x_1}^{l_1}\partial_{x_2}^{l_2}\partial_{x_3}^{l_3}$,  $|l|=|l_1|+|l_2|+|l_3|$,  $\widehat f(\xi )=\widehat f(\xi_1,\xi_2)=\iint f(x)e^{-2\pi i (x_1\xi_1+x_2\xi_2)}dx_1dx_2$, $C$ a uniform constant that is independent of  $x$, $\lambda$, and $\mathfrak M^{p,q}=\{f{\color{black} (x_1,x_2)}: \sum_{|l|\le { q }} |{\color{black}\partial_{x_1 }^{l_1 }\partial_{x_2 }^{l_2 }} (1+{\color{black}|x_1|+|x_2|})^pf |_  {L^1 \cap L^2  }<\infty\}$.

By establishing an IST, Wickerhauser solved the Cauchy problem of the KPII equation with a vacuum background: 
\begin{theorem}[The Cauchy Problem   \cite{W87}]\label{T:cauchy-0} Let $q\ge {\color{black} 8}$. If the initial data  $u_0\in \mathfrak M^{0,q}$ satisfies 
 \be\label{E:intro-ID} 
 u_0(x_1,x_2)=\overline{u_0(x_1,x_2)}, \qquad     |u_0  |_ {\mathfrak M^{0,0} } {\color{black}<} 1. 
\ee Then, we can construct the {\sl forward scattering transform}:
\be\label{E:intro-SD} 
\begin{split}
\mathcal S:u_0\mapsto  s _c(\lambda)=& \frac{\sgn(\lambda_I)}{2\pi i} \left[{u_0(\cdot)m_0(\cdot,\lambda)}\right]^\wedge(\frac{\overline\lambda-\lambda}{2\pi i},\frac{\overline\lambda^2-\lambda^2}{2\pi i}) 
\equiv \frac{\sgn(\lambda_I)}{2\pi i} \left[{u_0 m_0 }\right]^\wedge(\xi_1,\xi_2) ,
\end{split}
\ee satisfying the algebraic and analytic constraints:
 \begin{gather}
   s_c(\lambda)=  \overline{s_c( \overline\lambda)}, \quad    | (1+ |\xi  |)^{q }       s_c (\lambda(\xi))  |  _{
  L^\infty\cap L^2(d\xi_1d\xi_2)} 
    \le    {C |u_0|_{\mathfrak M^{0,q}}}.\label{E:intro-s-c-ana-c-0} 
 \end{gather} Here $m_0$ solves  the boundary value problem of the Lax equation:
\begin{gather}
(-\partial_{x_2}+\partial_{x_1}^2 +2\lambda\partial_{x_1}
+u_0(x_1,x_2))m_0(x_1,x_2 ,\lambda)=0, \ \label{E:intro-Lax}\ \
\lim_{|x|\to\infty}m_0(x_1,x_2,\lambda)= 1,  
\end{gather}
 
 Moreover, the solution $u$ to the KPII Cauchy problem  is given by: 
\begin{align}
u(x )= -\frac 1{\pi i}\partial_{x_1}\iint  T  m  \ d\overline\zeta\wedge d\zeta , & \label{E:intro-Lax-u-new-1-0}
\end{align} satisfying
\begin{align}
u(x)=\overline {u(x)},\quad |   u   |_{\mathfrak M^{0,q-{\color{black} 4}}}\le C |u_0|_{\mathfrak M^{0,q}}.& \label{E:intro-Lax-u-new-2-0}
\end{align}
Here $m$ solves the  Cauchy integral equation:
\begin{gather}   
    {   m} (x , \lambda) =1 +\mathcal C  T 
     m (x , \lambda) ,\quad m_0(x_1,x_2,\lambda)=m(x_1,x_2,0,\lambda)  \label{E:intro-CIE-t}   
     \end{gather} with  $\mathcal C$ being the Cauchy integral operator, and $T$   the continuous scattering operator:
\begin{align}
\mathcal C \phi  (x,\lambda)
\equiv & -\frac{1}{2\pi i}\iint_{\CC}\frac{\phi(x,\zeta)}{\zeta-\lambda}d\overline\zeta\wedge d\zeta,\label{E:ct-operator-0}\\
 T  \phi  (x ,\lambda)
\equiv & e^{(\overline\lambda-\lambda)x_1+(\overline\lambda^2-\lambda^2)x_2 +(\overline\lambda^3-\lambda^3)x_3 }{  s}_c(\lambda  )\phi(x, \overline\lambda). \label{E:ct-operator-1}
\end{align}

\end{theorem}

\subsection{The stationary points}\label{SS:stationary}

Building upon Theorem \ref{T:cauchy-0}, we are going to investigate the long-time asymptotic behavior of the KPII solution using the stationary phase method (cf \cite{DLP25} for the corresponding analysis in the KPI case). The natural coordinates for applying this method are the variables $(\zeta_R',\zeta_I')$ introduced in \eqref{E:real-ima-new}.   To motivate their use, we define :
\be\label{E:real-ima} 
\begin{split}
t_1 =\frac{x_1}{t}, \quad t_2=\frac{x_2}{t}, &\quad t= -x_3,\\
2\pi i\xi_1= \overline\zeta-\zeta, & \quad 2\pi i\xi_2=\overline\zeta^2-\zeta^2,\\
\zeta=\frac {\xi_2}{2\xi_1}-i\pi\xi_1=\zeta_R+i\zeta_I,  &  \quad d\overline\zeta\wedge d\zeta=2i\,d\zeta_Rd\zeta_I=\frac{ i\pi}{ | \xi_1| }d\xi_1 d\xi_2.
\end{split}
\ee
and the phase function $\mathbb S_0$ by  
\be \label{E:phase} 
   \mathbb S_0(t_1,t_2 ;\zeta( \xi) )
\equiv  \frac{(\bar\zeta -\zeta )x_1+(\bar\zeta^2-\zeta^2)x_2+(\bar\zeta^3-\zeta^3)x_3 }{ 2\pi it}.
\ee
Notice that due to the propagation of the KPII equation \eqref{E:KPII-intro}, we will investigate the asymptotic of the KPII solution $u(x)$ as $t\to \infty$.

To simplify the computation by eliminating  quadratic terms, we introduce :
\begin{align}
(\zeta,\overline\zeta)=(\zeta' + \frac{t_2}{3},\overline\zeta' +
 \frac{t_2}{3}), &\quad (\xi'_1,\xi'_2)=(\xi_1,\xi_2-\frac {2t_2}3\xi_1),\label{E:real-ima-new}\\
2\pi i\xi'_1= \overline\zeta'-\zeta',  &\quad 2\pi i\xi_2'=\overline\zeta'^2-\zeta'^2,\nonumber\\
\zeta'=\frac {\xi_2'}{2\xi_1'}-i\pi\xi_1'=\zeta'_R+i\zeta'_I,    &\quad d\overline\zeta'\wedge d\zeta'=2i\,d\zeta_R'd\zeta_I'=\frac{i\pi
}{  |\xi_1'| }d\xi'_1 d\xi_2',\nonumber\\
\partial_{\zeta'_I}= -\frac 1{\pi}\partial_{\xi'_1}-\frac{1}{\pi}\frac {\xi'_2}{\xi'_1}\partial_{\xi'_2},&\quad
\partial_{\zeta'_R}=   2\xi'_1 \partial_{\xi'_2},\nonumber
\end{align}
 which induces 
 the definition, estimates
\be\label{E:scatter-new}
\begin{gathered}
f(\zeta)=   f(\zeta' + \frac{t_2}{3}  )\equiv \widetilde f(\zeta'),\\
{\color{black}(| \zeta_R|^{l_1 }  +|\zeta_I|^{l_2} )|\partial_{\zeta_R}^{j_1}\partial_{\zeta_I}^{j_2} s_c    |\sim   (|\zeta_R'|^{l_1 }  +|\zeta_I'|^{l_2} )|\partial_{\zeta'_R}^{j_1}\partial_{\zeta'_I}^{j_2}    \widetilde s_c | } 
   ,\quad \zeta'_I\ne 0,
\end{gathered}
\ee{\color{black}by $|t_2|<C$,} and changes the phase function to
 \be \label{E:phase-new}
 \begin{split}
\mathbb  S_0(t_1,t_2 ;\zeta(\xi))\equiv& \frac{1 }{2\pi i  }  [a(\overline\zeta'-\zeta') - (\overline\zeta'^3-\zeta'^3)]=-\frac 1{\pi  }(a\zeta_I'+\zeta_I'^3-3\zeta_I'\zeta_R'^2)\\
= & a\xi_1'+ \pi^2\xi_1'^3-\frac 34\frac{\xi_2'^2}{\xi_1'}  
 \equiv  S_0(a;\zeta'(\xi ')), 
 \end{split}
\ee with
\be\label{E:a}
 a= t_1+\frac 13 t_2^2 .
 \ee

\begin{definition}\label{D:stationary} Let the phase function $S_0(a;\zeta')$ be defined by \eqref{E:phase-new} and \eqref{E:a}. Define 
\begin{itemize}
\item For $a<0$, the stationary points of $S_0$ are purely imaginary:
\be\label{E:stationary-+}
\zeta'_R=0, \quad\zeta'_I=  \pm \sqrt{\frac {-a}3}\equiv \pm r,\quad r>0.
\ee
\item For $a>0$, the stationary points of $S_0$ are purely real:
\be\label{E:stationary--}
 \zeta'_R=\pm \sqrt{\frac{ a}3}\equiv \pm r,\quad\zeta'_I=  0,\quad r>0.
\ee
\end{itemize}
\end{definition}

\section{Long time asymptotics of $u_1(x) $}\label{S:sigma--}

                                                                                                                                                                                                                                                                                                                                                                                                                                                                                                                                                                                                                                                                                                                                                                                                                                                                                                                                                                                                                                                                                                                                                                                                                                                                                                                                                                                                                                                                                                                                                                                                                                                                                                                                                                                                                                                                                                                                                                                                                                                                                                                                                                                                                                                                                                                                                                                                                                                                                                                                                                                                                                                                                                                                                                                                                                                                                                                                                                                                                                                                                                                                                                                                                                                                                                                                                                                                                                                                                                                                                                                                                                                                                                                                                                                                                                                                                                                                                                                                                                                                                                                                                                                                                                                                \subsection{Estimates on scattering data}\label{SS:sd}                                                                                                                                                                                                                                                                                                                                                                                                                                                                                                                                                                                                                                                                                                                                                                                                                                                                                                                                                                                                                                                                                                                                                                                                                                                                                                                                                                                                                                                                                                                                                                                                                                                                                                                                                                                                                                                                                                                                                                                                                                                                                                                                                                                                                                                                                                                                                                                                                                                                                                                                                                                                                                                                                                                                                                                                                                                                                                                                                                                                                                                                                                                                                                                                                                                                                                                                                                                                                                                                                                                                                                                                                                                                                                                                                                                                                                                                                                                                                                                                                                                                                                                                                                                                      In this subsection, we provide estimates of derivatives of the scattering data. 
                                                                                                                                                                                                                                                                                                                                                                                                                                                                                                                                                                                                                                                                                                                                                                                                                                                                                                                                                                                                                                                                                                                                                                                                                                                                                                                                                                                                                                                                                                                                                                                                                                                                                                                                                                                                                                                                                                                                                                                                                                                                                                                                                                                                                                                                                                                                                                                                                                                                                                                                                                                                                                                                                                                                                                                                                                                                                                                                                                                                                                                                                                                                                                                                                                                                                                                                                                                                                                                                                                                                                                                                                                                                                                                                                                                                                                                                                                                                                                                                                                                                                                                                                                                                                                                                
                                                                                                                                                                                                                                                                                                                                                                                                                                                                                                                                                                                                                                                                                                                                                                                                                                                                                                                                                                                                                                                                                                                                                                                                                                                                                                                                                                                                                                                                                                                                                                                                                                                                                                                                                                                                                                                                                                                                                                                                                                                                                                                                                                                                                                                                                                                                                                                                                                                                                                                                                                                                                                                                                                                                                                                                                                                                                                                                                                                                                                                                                                                                                                                                                                                                                                                                                                                                                                                                                                                                                                                                                                                                                                                                                                                                                                                                                                                                                                                                                                                                                                                                                                                                                                                                Notice that, for fixed $\lambda$ \cite[Equation (II.9)]{W87},
\begin{align}
m_0(x_1,x_2,\lambda)  
\equiv&1+G_{u_0}m_0,\label{E:wick-m}
\end{align}  where
\be\label{E:p-green}
\begin{gathered}
 G_{u_0}f=   \iint e^{2\pi i(x_1\xi_1+x_2\xi_2)} \frac{\left[u_0f\right]^\wedge(\xi_1,\xi_2,\lambda)}{p_\lambda(\xi_1,\xi_2)}d\xi_1 d\xi_2,\\
 p_{\lambda}(\xi_1,\xi_2)= (2\pi i\xi _1+\lambda )^2-(2\pi i\xi _2+{\lambda }^2).
\end{gathered}
\ee

Applying  the estimates  \cite{W87}:
\be\label{E:p-0}
\begin{gathered}
\left|\frac 1{p_{\lambda}}\right|_{L^1(\Omega_{\lambda}, d\xi_1 d\xi_2 )}\le  \frac C{(1+|\lambda_I|^2)^{1/2}},\quad 
\left|\frac 1{p_{\lambda}}\right|_{L^2(\Omega_{\lambda}^c,d\xi_1 d\xi_2)}\le \frac C{(1+|\lambda_I|^2)^{1/4}}, 
\end{gathered}
\ee where  $\Omega_{\lambda}=\{(\xi_1,\xi_2)\in\RR^2 : |p_{\lambda}(\xi_1,\xi_2)|<1\}$, we have
\be|G_{u_0}f|_{L^\infty}\le C |u_0|_{\mathfrak M^{0,0}}|f|_{L^\infty} .\label{E:AYF-G-infty-G}
\ee

Moreover,  via the {\color{black}Fourier analysis, the residue theorem,   and a principal value interpretation,} the operator $G_{u_0}$ can be written as  \cite{AYF83}:
\begin{align}
&G_{u_0}f(x_1,x_2,\lambda)\label{E:green} \\
=&\iint\left[\frac{1}{p_\lambda(\xi_1,\xi_2)}  \right]^{\vee_{\xi_1,\xi_2}}(x_1-x'_1,x_2-x'_2)\left[u_0f\right](x'_1,x'_2)dx'_1dx'_2 \nonumber\\
=&\iint dx'_1dx'_2 \ \left[u_0f\right](x'_1,x'_2)\iint \frac{e^{2\pi i((x_1-x_1')\xi_1+(x_2-x_2')\xi_2)}}{p_\lambda(\xi_1,\xi_2)}d\xi_1 d\xi_2 \nonumber\\
=&-\frac{1}{2\pi i}\iint dx'_1dx'_2 \ \left[u_0f\right](x'_1,x'_2)\iint \frac{e^{2\pi i((x_1-x_1')\xi_1+(x_2-x_2')\xi_2)}}{\xi_2-(2\pi i\xi_1^2+2\xi_1\lambda)}d\xi_1 d\xi_2 \nonumber\\
=&-\frac{1}{2\pi i}\iint dx'_1dx'_2 \ \left[u_0f\right](x'_1,x'_2){\color{black}\sgn (x_2-x_2')}\int d\xi_1e^{2\pi i [ (x_1-x_1') +(x_2-x_2')2\lambda_R]\xi _1}\nonumber\\
\times&\theta(( x_2-  x_2')\xi_1(   {\xi _1} + \frac{\lambda_I }\pi    ))e^{-   4\pi^2  \xi_1( x_2-  x_2')(   {\xi _1} + \frac{\lambda_I }\pi    )},\nonumber
\end{align}where $\theta(s)$ is the Heaviside function.                                              Hence,  
\begin{align}
&\left[\partial_{\lambda_R}G_{u_0}\right]f(x_1,x_2,\lambda)\label{E:green-d-r}\\
=&-\frac{1}{2\pi i} \iint dx'_1dx'_2 \ \left[u_0f\right](x'_1,x'_2){\color{black}\sgn (x_2-x_2')}\int d\xi_1e^{2\pi i [ (x_1-x_1') +(x_2-x_2')2\lambda_R]\xi _1}\nonumber\\
\times&\theta(( x_2-  x_2')\xi_1(   {\xi _1} + \frac{\lambda_I }\pi    ))\left[4\pi i\xi_1(x_2-x_2')\right]e^{-   4\pi^2  \xi_1( x_2-  x_2')(   {\xi _1} + \frac{\lambda_I }\pi    )}  \nonumber\\
=&-\frac{1}{2\pi i} \iint dx'_1dx'_2 \ \left[u_0f\right](x'_1,x'_2){\color{black}\sgn (x_2-x_2')}\int d\xi_1e^{2\pi i [ (x_1-x_1') +(x_2-x_2')2\lambda_R]\xi _1} \nonumber\\
\times&\theta(( x_2-  x_2')\xi_1(   {\xi _1} + \frac{\lambda_I }\pi    ))\left(\frac{4\pi i}{-8\pi^2}\right)\partial_{\xi_1}e^{-   4\pi^2  \xi_1( x_2-  x_2')(   {\xi _1} + \frac{\lambda_I }\pi    )}  \nonumber\\
+ &{\color{black}\left(\frac{4\pi i}{-8\pi^2}\right)(4\pi\lambda_I  )(x_2G_{   u_0}-G_{   x_2u_0}) f }   
.\nonumber
\end{align}

As a result,
\be\label{E:sd-der-m}
\begin{gathered}
 \partial_{\lambda_R}m_0 
= \sum_{k=0}^1\sum_{k'=0}^k \lambda_I^k x_2^{k'}m_{1,k,k'}  , \quad
| m_{1,k,k'}|_{L^\infty}\le C   |u_0| _{\mathfrak M^{k-k',0}}, 
\end{gathered}
\ee {\color{black}with
\begin{align*}
m_{1,0,0}=&(1-G_{u_0})^{-1}(-\frac{1}{2\pi i}) \iint dx'_1dx'_2 \ \left[u_0m_0\right](x'_1,x'_2){\color{black}\sgn (x_2-x_2')}\\
\times&\int d\xi_1e^{2\pi i [ (x_1-x_1') +(x_2-x_2')2\lambda_R]\xi _1} \\
\times&\theta(( x_2-  x_2')\xi_1(   {\xi _1} + \frac{\lambda_I }\pi    ))\left(\frac{4\pi i}{-8\pi^2}\right)\partial_{\xi_1}e^{-   4\pi^2  \xi_1( x_2-  x_2')(   {\xi _1} + \frac{\lambda_I }\pi    )},\\
m_{1,1,1}=& \left(\frac{4\pi i}{-8\pi^2}\right)(4\pi  )G_{u_0}m_0,\\
m_{1,1,0}=&-2 i    (1-G_{u_0})^{-1} G_{x_2u_0} \left[ G_{ u_0}-1\right]m_0 .
\end{align*}

 Moreover,} for $\lambda'_I\ne 0$,
\begin{align}
 &|\partial_{\lambda'_R} \widetilde s_c| = |\partial_{\lambda _R}   s_c|\label{E:o-sd-der-hot-0}\\
 \le &C |\lambda_I\left[x_2{u_0 m_0 }\right]^\wedge |+ C | \left[{u_0 m_{1,0,0} }\right]^\wedge |+ C | \lambda_I\left[{ u_0 m_{1,1,0} }\right]^\wedge |+ C | \lambda_I\left[{x_2u_0 m _{1,1,1} }\right]^\wedge |\nonumber \\
 \le &C(1+|u_0|^2 _{\mathfrak M^{1,0}}) (1+|\lambda'_I|).\nonumber 
\end{align}

\begin{lemma}\label{L:sd-der} {\color{black} Suppose   $|t_2|<C$ and $\lambda'_I\ne 0$.}
\begin{align}
|\partial_{\lambda'_R}^j\widetilde s_c| \le & C(1+ |u_0|^2_{\mathfrak M^{j ,0}})(1+|\lambda'_I  |)^{j}  ,\label{E:sd-der-hot}\\
  |\partial_{\lambda'_I}^j \widetilde s_c|\le & C(1+ |u_0|^2_{\mathfrak M^{j ,0}})(1+|\lambda'  |)^{j} ,\label{E:sd-der-hot-1}\\
|\partial_{\lambda'_R} \partial_{\lambda'_I}  \widetilde s_c|\le &C (1+|u_0|^2_{\mathfrak M^{2,0}})(1+|\lambda' |)^{2} .\label{E:sd-der-hot-2}
\end{align}

\end{lemma}
\begin{proof}Similarly to the argument for \eqref{E:o-sd-der-hot-0}, for $\lambda'_I\ne 0$,    using
\begin{align}
&\left[\partial_{\lambda_R}^2G_{u_0}\right]f(x_1,x_2,\lambda)\label{E:green-d-r-2}\\
=& -\frac{1}{2\pi i} \iint dx'_1dx'_2 \ \left[u_0f\right](x'_1,x'_2){\color{black}\sgn (x_2-x_2')}\int d\xi_1e^{2\pi i [ (x_1-x_1') +(x_2-x_2')2\lambda_R]\xi _1} \nonumber\\
\times&\theta(( x_2-  x_2')\xi_1(   {\xi _1} + \frac{\lambda_I }\pi    ))\left(\frac{4\pi i}{-8\pi ^2}\right)^2\partial^2_{\xi_1}e^{-   4\pi^2  \xi_1( x_2-  x_2')(   {\xi _1} + \frac{\lambda_I }\pi    )}\nonumber\\
-&\frac{1}{2\pi i}\iint dx'_1dx'_2 \ \left[u_0f\right](x'_1,x'_2){\color{black}\sgn (x_2-x_2')}\int d\xi_1e^{2\pi i [ (x_1-x_1') +(x_2-x_2')2\lambda_R]\xi _1} \nonumber\\
\times&\theta(( x_2-  x_2')\xi_1(   {\xi _1} + \frac{\lambda_I }\pi    ))\left(\frac{4\pi i}{-8\pi ^2}\right)^2(4\pi \lambda_I)(x_2-x_2')\partial _{\xi_1}e^{-   4\pi^2  \xi_1( x_2-  x_2')(   {\xi _1} + \frac{\lambda_I }\pi    )}\nonumber \\
{\color{black}+}& \left(\frac{4\pi i}{-8\pi ^2}\right)(4\pi i\lambda_I  ) \partial_{\lambda_R}\left[{\color{black}(x_2G_{  u_0} -G_{ x_2 u_0})f}\right] ,
 \nonumber\\
 &\vdots\nonumber\\
&\left[\partial_{\lambda_I}G_{u_0}\right]f(x_1,x_2,\lambda)\label{E:green-d-i}\\
=&-\frac{1}{2\pi i} \iint dx'_1dx'_2 \ \left[u_0f\right](x'_1,x'_2){\color{black}\sgn (x_2-x_2')}\int d\xi_1e^{2\pi i [ (x_1-x_1') +(x_2-x_2')2\lambda_R]\xi _1}\nonumber\\
\times&\left[\partial_{\lambda_I}\theta(( x_2-  x_2')\xi_1(   {\xi _1} + \frac{\lambda_I }\pi    ))\right]e^{-   4\pi^2  \xi_1( x_2-  x_2')(   {\xi _1} + \frac{\lambda_I }\pi    )}\nonumber\\
-&\frac{1}{2\pi i}\iint dx'_1dx'_2 \ \left[u_0f\right](x'_1,x'_2){\color{black}\sgn (x_2-x_2')}\int d\xi_1e^{2\pi i [ (x_1-x_1') +(x_2-x_2')2\lambda_R]\xi _1}\nonumber\\
\times&  \theta(( x_2-  x_2')\xi_1(   {\xi _1} + \frac{\lambda_I }\pi    )) \cdot [-4\pi \xi_1  (x_2-x_2')]  e^{-   4\pi^2  \xi_1( x_2-  x_2')(   {\xi _1} + \frac{\lambda_I }\pi    )},  \nonumber\\
&\vdots \nonumber
\end{align} we have
{\color{black}\be\label{E:sd-der-m-2-2-1}
\partial _{\lambda_I}m_0 
=    { \sum_{k =0}^1 \sum_{k'=0}^k\lambda_I^{k } x_2 ^{k'}m^{+}_{1 ,k,  k'}}, \quad |m^{+}_{1 ,k,  k'}|_{L^\infty}\le C|u_0|_{\mathfrak M^{k-k',0}},
\ee  and, for $j>1$,
\begin{align}
\partial _{\lambda_R}\partial _{\lambda_I}m_0 
= &  \sum_{k =0}^2\sum_{  h+h'=0}^2
\lambda_I^{k } x_2 ^{h}m _{ k, h,h'},\quad |m_{  k, h,h'}|\le C (1+|u_0| _{\mathfrak M^{2,0}}),\label{E:sd-der-m-2-3}\\
\partial^j_{\lambda_R}m_0 
= &\sum_{k=0}^j \sum_{ h+h'=0} ^j\lambda_I^kx_2^{h} m_{j,k, h,h'},\quad |m_{j,k, h,h'}|\le C(1+|u_0| _{\mathfrak M^{j,0}}), \label{E:sd-der-m-2}\\
\partial^j_{\lambda_I}m_0 
= &  { \sum_{k =0}^j \sum_{  h+h'=0}^j \lambda_I^{k } x_2 ^{h}m^{+}_{j ,k, h,h'}}+\sum_{k+k' =0}^{j-1} \sum_{h+h'+l+l'=0}^{j-l} \lambda_I^{k }\lambda_R^{k'} x_1^{h} x_2 ^{l} m^{-}_{j,k,k',h,h',l,l'}, \label{E:sd-der-m-2-2}\\
&{ | m^{+}_{j,k,h,h'}|,\  |   m^{-}_{j,k,k',h,h',l,l'}|\le C(1+|u_0| _{\mathfrak M^{j,0}}).} \nonumber 
\end{align}}

Hence the proof of the lemma can be justified by taking derivatives of \eqref{E:intro-SD}.

\end{proof}

 We have  sharper estimates for the following 
 first derivatives:
\begin{lemma}\label{L:sd-der-new} {\color{black} Suppose the assumption of Theorem \ref{T:u} holds. For $\lambda'_I\ne 0$,}
\begin{gather}
  |\partial_{\lambda'_R} \widetilde s_c|\le   C {\color{black}(1+|u_0|^{2}_{\mathfrak M^{1  ,1}} } ),\quad  |\partial_{\lambda'_I}  \widetilde s_c| \le   C (1+\min\{|\lambda_R|, \frac 1{|\lambda_I|}\}){\color{black}(1+|u_0|^{2}_{\mathfrak M^{1  ,1}} } ),\label{E:sd-der-hot-1-bdd}\\
|\lambda'_I\partial_{\lambda'_R} \widetilde s_c|\le C{\color{black}(1+|u_0|^{2}_{\mathfrak M^{1  ,2}} }),\quad |\lambda'_I\partial_{\lambda'_I}  \widetilde s_c| \le   C(1+\min\{|\lambda_R|, \frac 1{|\lambda_I|}\}) {\color{black}(1+|u_0|^{2}_{\mathfrak M^{1  ,2}} }) .\label{E:sd-der-hot-1-bdd-new}
\end{gather} 

\end{lemma}                                                                                                                                                                                                                                                                                                                                                                                                                                                                                                                                                                                                                                                                                                                                                                                                                                                                                                                                                                                                                                                                                                                                                                                                                                                                                                                                                                                                                                                                                                                                                                                                                                                                                                                                                                                                                                                                                                                                                                                                                                                                                                                                                                                                                                                                                                                                                                                                                                                                                                                                                                                                                                                                                                                                                                                                                                                                                                                                                                                                                                                                                                                                                                                                                                                                                                                                                                                                                                                                                                                                                                                                                                                                                                                                                                                                                                                                                                                                                                                                                                                                                                                                                                                                                                                      \begin{proof} For $j=1,2$, via \eqref{E:intro-Lax},  \eqref{E:wick-m}, the Fourier theory,  and integration by parts,
\begin{gather} 
|\left[\partial _{x_j} G_{u_0}\right]f|_{L^\infty}\le C (| u_0 |_{\mathfrak M^{0,1}}|f| _{L^\infty} +| u_0 |_{\mathfrak M^{0,0}}|\partial _{x_j} f| _{L^\infty}) ,\label{E:AYF-G-infty}\\
| \partial _{x_j} m_0|_{L^\infty} \le  C|u_0|_{\mathfrak M^{0,1}}, \label{E:sd-der-m-x}\\
|\left[\partial _{x_j} G_{u_0}\right]m_0|_{L^\infty}\le C | u_0 |_{\mathfrak M^{0,1}},\label{E:AYF-m-infty}\\
| \partial _{x_j}  m_{1,1,k}|_{L^\infty}\le C (1+| u_0 | _{\mathfrak M^{1,1}}),\quad k=0,1.\label{E:AYF-m-infty-m}
\end{gather}Combining with \eqref{E:green-d-r} and integration by parts, for $\lambda'_I\ne 0$,  we obtain
\begin{align}
 &|\partial_{\lambda'_R} \widetilde s_c| = |\partial_{\lambda _R}   s_c|\label{E:o-sd-der-hot}\\
 \le &C( |\lambda_I\left[x_2{u_0 m_0 }\right]^\wedge |+   | \left[{u_0 m_{1,0,0} }\right]^\wedge |+   | \lambda_I\left[{ u_0 m_{1,1,0} }\right]^\wedge |+   | \lambda_I\left[{x_2u_0 m _{1,1,1} }\right]^\wedge |)\nonumber \\
 \le &C (| \left[\partial_{x_1}\{x_2{u_0 m_0 }\}\right]^\wedge|+  |u_0|_{\mathfrak M^{0 ,0}}  +   |  \left[\partial_{x_1}\{{ u_0  m_{1,1,0} }\}\right]^\wedge |+   |  \left[\partial_{x_1}\{{x_2 u_0  m _{1,1,1} }\}\right]^\wedge | ) \nonumber \\
 \le & C(1+|u_0|^2 _{\mathfrak M^{1,1}}),\nonumber
\end{align}and, similarly,
\begin{align}
 &|\lambda'_I\partial_{\lambda'_R} \widetilde s_c| \le C |\lambda_I\partial_{\lambda _R}   s_c|  \label{E:o-sd-der-hot-i}\\
\le&C|\partial_{\lambda _R} \left[(\partial_{x_1}u_0) m_0  \right]^\wedge|+C|\partial_{\lambda _R} \left[u_0 (\partial_{x_1} m_0 ) \right]^\wedge|  
\nonumber\\
\le&C|\partial_{\lambda _R} \left[(\partial_{x_1}u_0) m_0  \right]^\wedge|+C|\partial_{\lambda _R} \left[u_0 (1-G_{u_0})^{-1}(\partial_{x_1}u_0)m_0   \right]^\wedge|  
\nonumber\\
\le & C(1+|u_0|^2 _{\mathfrak M^{1,2}}).\nonumber
\end{align}

In an entirely similar way, we can justify the corresponding estimates for $\partial_{\lambda_I'}\widetilde s_c$ and $\lambda_I'\partial_{\lambda_I'}\widetilde s_c$:
\begin{align}
 &|\partial_{\lambda'_I} \widetilde s_c| = |\partial_{\lambda _I}   s_c|\label{E:I-o-sd-der-hot}\\
 \le &C| \left[x_1{u_0 m_0 }\right]^\wedge |+C |\lambda_R\left[x_2{u_0 m_0 }\right]^\wedge |\nonumber\\
 +& C | [{u_0 m^+_{1,0,0} } ]^\wedge |+ C | \lambda_I [{\color{black} u_0 m^+_{1,1,0} } ]^\wedge |+ C | \lambda_I [{\color{black}x_2u_0 m^+ _{1,1,1} } ]^\wedge |\nonumber \\
 \le &C |  [   x_1{u_0 m_0 }  ]^\wedge|+C\min\{\ |\lambda_R|\times | \left[x_2{u_0 m_0 }\right]^\wedge |,\ \frac 1{|\lambda_I|}\times|\left[\partial_{x_2}\{x_2{u_0 m_0 }\}\right]^\wedge |\ \}   \nonumber\\
+& C |u_0m^+_{1,0,0}|_{\mathfrak M^{0 ,0}}  + C |  \left[\partial_{x_1}\{{\color{black} u_0  m^+_{1,1,0} }\}\right]^\wedge |+ C |  \left[\partial_{x_1}\{{\color{black}x_2 u_0  m^+ _{1,1,1} }\}\right]^\wedge |  \nonumber \\
 \le & C(1+\min\{|\lambda_R|, \frac 1{|\lambda_I|}\})(1+|u_0|^2 _{\mathfrak M^{1,1}}),\nonumber
\end{align}and
\begin{align}
 &|\lambda'_I\partial_{\lambda'_I} \widetilde s_c| \le C(1+\min\{|\lambda_R|, \frac 1{|\lambda_I|}\})(1+|u_0|^2 _{\mathfrak M^{1,2}}).  \label{E:I-o-sd-der-hot-i} 
\end{align}

\end{proof}

                                                                                                                                                                                                                                                                                                                                                                                                                                                                                                                                                                                                                                                                                                                                                                                                                                                                                                                                                                                                                                                                                                                                                                                                                                                                                                                                                                                                                                                                                                                                                                                                                                                                                                                                                                                                                                                                                                                                                                                                                                                                                                                                                                                                                                                                                                                                                                                                                                                                                                                                                                                                                                                                                                                                                                                                                                                                                                                                                                                                                                                                                                                                                                                                                                                                                                                                                                                                                                                                                                                                                                                                                                                                                                                                                                                                                                                                                                                                                                                                                                                                                                                                                                                                                                                                \subsection{Long time asymptotics of $u_1(x) $}\label{SS:sigma--} 
Throughout this subsection,   $a$, $r$, $t_i$, $t$ are as defined in Definition \ref{D:stationary} and the assumption of Theorem \ref{T:u} holds. 
Let $\psi$ be a non negative smooth   cutoff function such that $\psi(s)=1$ for $|s|\le \frac 12$ and $\psi(s)=0$ for $|s|\ge 1$. Given $w_0\in\mathbb R$, define
\be\label{E:localize-fct}
\psi_{r,w_0}(s)=\psi\left(\frac{16(s-w_0)}{r}\right)+\psi\left(\frac{16(s+w_0)}{r}\right).
\ee Let   
\be\label{E:local}
 \chi( \zeta'  )= 
\left\{ 
{\begin{array}{ll}
 \psi_{r,r}(\zeta'_R)\psi_{r,0}(\zeta'_I),&\textit{ for $a>0$,}\\
\psi_{r,r}(\zeta'_I)\psi_{r,0}(\zeta'_R),&\textit{ for $a<0$.}
\end{array}}
\right.
\ee  Decompose the linearized term $u_1(x)$, defined by \eqref{E:rep-dec-1}, into 
\begin{align}
&\qquad u_1(x )= u_{1,1}(x)+ u_{1,2}(x),\label{E:rep-dec-1-dec}\\
u_{1,1}(x)=&
-\dfrac 1{\pi i} \iint  \widetilde {  s}_c(\zeta'  )e^{2\pi i tS_0 }    (\overline\zeta'-\zeta')\chi( \zeta'  )   \ d\overline\zeta'\wedge d\zeta',\label{E:rep-dec-11}
\\
 u_{1,2}(x)=&
 -\dfrac 1{\pi i} \iint  \widetilde {  s}_c(\zeta'  )e^{2\pi i tS_0 }    (\overline\zeta'-\zeta')(1-\chi( \zeta'  ))   \ d\overline\zeta'\wedge d\zeta' .\label{E:rep-dec-12}
\end{align}

We provide a quadratic growth estimate on the phase function away from stationary points:
\begin{lemma}\label{L:growth} On the support of $1-\chi(\zeta')$, the phase function $S_0$ satisfies:
\begin{align}
|\nabla S_0|\equiv|(\partial_{\zeta'_R} S_0,\partial_{\zeta'_I} S_0)|\ge & {\color{black}\frac 1C} (|a|+|\zeta'|^2) ,\label{E:phase-d}\\
|\Delta S_0|\equiv|{\color{black}(\partial_{\zeta'_R}^2S_0,\partial_{\zeta'_I} ^2 S_0)}|\le & C|\zeta'|.  \label{E:phase-d-2}
\end{align}  
\end{lemma}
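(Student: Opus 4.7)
The plan is to compute $\nabla S_0$ and $\Delta S_0$ directly from \eqref{E:phase-new}, factor the gradient through the stationary points of Definition \ref{D:stationary}, and then bound the resulting product using the support structure of the cutoff \eqref{E:local}. Since $S_0$ is real-valued, $|\nabla S_0|^2 = 4|\partial_{\zeta'}S_0|^2$, so \eqref{E:i-2-est-1} immediately yields $|\nabla S_0| = \pi^{-1}|a - 3\zeta'^2|$. Writing $3\zeta'^2 - a = 3(\zeta' - z_+)(\zeta' - z_-)$ with $(z_+, z_-) = (r,-r)$ for $a > 0$ and $(ir,-ir)$ for $a < 0$, this becomes
\[
|\nabla S_0| = \frac{3}{\pi}\,|\zeta' - z_+|\,|\zeta' - z_-|,
\]
so the stationary points of Definition \ref{D:stationary} appear explicitly as the zeros of $\nabla S_0$.

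I would then establish \eqref{E:phase-d} by splitting into two regimes. In the far regime $|\zeta'|\ge 2r$, the triangle inequality gives $|\zeta' - z_\pm|\ge |\zeta'|-r\ge |\zeta'|/2$, so $|\nabla S_0| \ge C|\zeta'|^2$; since $|a| = 3r^2 \le \tfrac{3}{4}|\zeta'|^2$ here, the right-hand side $|a|+|\zeta'|^2$ is absorbed. In the near regime $|\zeta'|\le 2r$ one has $|a|+|\zeta'|^2 \le 7r^2$, so it suffices to prove $|\zeta' - z_\pm|\ge r/16$ on supp$(1-\chi)$. This is the one place where the product form \eqref{E:local} enters: outside supp$(\chi)$ at least one of the two scalar cutoffs in \eqref{E:localize-fct} vanishes, forcing either the ``transverse'' coordinate of $\zeta'$ (namely $\zeta_I'$ when $a>0$ and $\zeta_R'$ when $a<0$) to satisfy $|\zeta_I'|\ge r/16$ (resp.\ $|\zeta_R'|\ge r/16$), or both of the shifted conditions $|\zeta_R' - r|, |\zeta_R' + r|\ge r/16$ (resp.\ their imaginary analogue) to hold simultaneously; in each sub-case the bound $|\zeta' - z_\pm|\ge r/16$ follows by inspecting the relevant coordinate alone.

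For \eqref{E:phase-d-2}, I would simply differentiate $S_0 = -\pi^{-1}(a\zeta_I' + \zeta_I'^3 - 3\zeta_I'\zeta_R'^2)$ twice in each variable to obtain $\partial_{\zeta_R'}^2 S_0 = 6\zeta_I'/\pi$ and $\partial_{\zeta_I'}^2 S_0 = -6\zeta_I'/\pi$, which cancel to give $\Delta S_0\equiv 0$ (reflecting that $S_0$ is the imaginary part of the holomorphic function $-\pi^{-1}(a\zeta' - \zeta'^3)$); a fortiori \eqref{E:phase-d-2} holds with arbitrary constant, and each individual second derivative is bounded by $C|\zeta'|$ as well. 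The only place requiring any care is the near-regime bookkeeping of the preceding paragraph, which is precisely where the particular product structure of $\chi$ in \eqref{E:local} was engineered so that exclusion of either scalar factor suffices to push $\zeta'$ away from \emph{both} stationary points; the far regime and the Laplacian bound are purely geometric and algebraic, respectively.
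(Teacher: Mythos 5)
Your proof is correct, and it takes a genuinely different route from the paper's. The paper argues component-wise: it records $\partial_{\zeta'_R}S_0=\tfrac6\pi\zeta'_R\zeta'_I$ and $\partial_{\zeta'_I}S_0=\tfrac1\pi(-a+3\zeta_R'^2-3\zeta_I'^2)$, and then (for $a<0$ only, the case $a>0$ being symmetric) runs a three-way case analysis driven by which scalar factor of $\chi$ is off its plateau --- the annulus $||\zeta'_I|-r|\le r/32$ with $|\zeta'_R|$ bounded below, the region $|\zeta'_I|\le r/C$, and the region $|\zeta'_I|\ge Cr$ --- estimating one or both components of the gradient separately in each case. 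Your identity $|\nabla S_0|=2|\partial_{\zeta'}S_0|=\tfrac3\pi|\zeta'-z_+|\,|\zeta'-z_-|$ compresses all of this into a single product formula whose zeros are exactly the stationary points, after which the near/far dichotomy in $|\zeta'|$ is routine; it also handles both signs of $a$ uniformly and makes transparent why $\chi$ is built as a product of one-dimensional bumps. Your observation that $\Delta S_0\equiv 0$ (harmonicity of $S_0$, as the imaginary part of $-\pi^{-1}(a\zeta'-\zeta'^3)$) strengthens \eqref{E:phase-d-2}, and you also supply the bound $C|\zeta'|$ on each individual second derivative, which is what is actually used downstream. One imprecision to repair: the lemma is stated on $\mathrm{supp}(1-\chi)=\overline{\{\chi\ne1\}}$, not on the complement of $\mathrm{supp}(\chi)$, so in the near regime you may not assume that a scalar factor of $\chi$ \emph{vanishes} --- only that it differs from its plateau value $1$. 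Since $\psi(s)=1$ for $|s|\le\tfrac12$, a factor off its plateau forces the relevant coordinate distance to exceed $r/32$ (rather than the $r/16$ you quote, which corresponds to the vanishing set); with that substitution your sub-case analysis goes through verbatim and also covers the transition annuli where $0<\chi<1$.
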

\begin{proof} We have
\be\label{E:partial-x}
\begin{split}
\partial_{\zeta'_R}S_0=&+\frac{6}{\pi}\zeta'_R\zeta'_I,\quad
\partial_{\zeta'_I}S_0= +\frac 1{ \pi}(-a+3({\zeta_R'}^2-{\zeta_I'}^2)).
\end{split}
\ee Therefore \eqref{E:phase-d-2} is justified and
\begin{align}
 |\partial_{\zeta'_R}S_0|^2+|\partial_{\zeta'_I}S_0|^2 
\ge &{\color{black} \frac{9}{\pi^2}}[{\zeta'_R}^4+2{\zeta'_R}^2 {\zeta'_I}^2 +({\zeta'_I}^2+\frac a3)^2] , \qquad a<0,\label{E:ge--}\\
|\partial_{\zeta'_R}S_0|^2+|\partial_{\zeta'_I}S_0|^2
\ge &{\color{black} \frac{9}{\pi^2}}[{\zeta'_I}^4+2{\zeta'_R}^2 {\zeta'_I}^2 +({\zeta'_R}^2-\frac a3)^2], \qquad a>0 .\label{E:ge-+}  
\end{align} 

Since proofs are identical. We only give the proof of \eqref{E:phase-d} for $a<0$ for simplicity. 
By assumption (1), if $\psi_{r,r}(\zeta'_I)= 1$, then $\psi_{r,0}(\zeta'_R)\ne 1$. Namely,
\be\label{E:1-not-1}
\frac{||\zeta'_I|-r|}r \le \frac 1{32}< \frac{|\zeta'_R|}r,
\ee
along with $r\sim \pm\sqrt{\frac {-a}3}$ and \eqref{E:ge--}, implies that
\be\label{E:ima}
|\partial_{\zeta'_R}S_0|^2+|\partial_{\zeta'_I}S_0|^2 
\ge {\color{black}\frac 1C}( {\zeta'_R}^4+  {\zeta'_I}^4)\ge  {\color{black}\frac 1C}( {\zeta'_R}^4+  {\zeta'_I}^4+a^2).
\ee 

On the other hand, if  $\psi_{r,r}(\zeta'_I)\ne 1$, then there exists $\color{black}C_0>1$ such that  
\be \label{E:ge}
\textit{ either $|\zeta'_I|\le  \frac 1{\color{black}C_0}r$ or $
|\zeta'_I|\ge  {\color{black}C_0}r$ holds}.
\ee

Applying \eqref{E:ge--}, we have
\be\label{E:le-1}
\begin{split}
|\partial_{\zeta'_R}S_0|^2+|\partial_{\zeta'_I}S_0|^2 
\ge& {\color{black}\frac 1C}( {\zeta'_R}^4+  a^2)\ge   {\color{black}\frac 1C}( {\zeta'_R}^4+  {\zeta'_I}^4+a^2),\quad\,  |\zeta'_I|\le  \frac 1{\color{black}C_0}r,\\
|\partial_{\zeta'_R}S_0|^2+|\partial_{\zeta'_I}S_0|^2   
\ge &  {\color{black}\frac 1C}(  {\zeta'_R}^4+{\zeta'_I}^4)\ge   {\color{black}\frac 1C}( {\zeta'_R}^4+  {\zeta'_I}^4+a^2),\quad |\zeta'_I|\ge  {\color{black}C_0}r.
\end{split}
\ee 

\end{proof}

\begin{proposition}\label{P:u-12} {\color{black}Suppose the assumption of Theorem \ref{T:u} holds and  $|a|>\delta$.} Then 
\be \label{E:u-12}
  |u_{1,2}(x)|\sim o(t^{-1}).
\ee

\end{proposition}
\begin{proof}  
Integration by parts, applying \eqref{E:intro-s-c-ana-c-0}, {\color{black} the factor $(\overline\zeta'-\zeta')$,} and Lemmas \ref{L:sd-der-new}, \ref{L:growth},  we have
\begin{align}
    |u_{1,2}(x) |
\le & \frac Ct|   \iint   e^{-2  it(a\zeta'_I+{\zeta'_I}^3-3\zeta'_I{\zeta'_R}^2) }\nabla\cdot \left( \widetilde{  s}_c(\zeta'  ) (\overline\zeta'-\zeta')(1-\chi)\frac{\nabla S_0}{|\nabla S_0|^2}\right) d\zeta'_R d\zeta'_I  | , \label{E:i-2-est-o}
\end{align}with
\be\label{E:integration-by-parts-o}
|\nabla\cdot \left(\widetilde{  s}_c(\zeta'  )(\overline\zeta'-\zeta')(1-\chi)\frac{\nabla S_0}{|\nabla S_0|^2}\right)|_{L^1(d\zeta'_R d\zeta'_I)}<C .
\ee Here note that discontinuity of $\tilde s_c$ at $\zeta_I'=0$ can be neglected thanks to the factor $(\overline\zeta'-\zeta')$.

Setting $ \tilde\zeta_R   = \zeta'_I{\zeta'_R}^2    $, for $\zeta'_R\gtrless 0$, $ \zeta_I\gtrless 0$,
\begin{align}
& | u_{1,2}(x) | \label{E:i-2-est-o-new}\\
\le &  \frac Ct|  \int _{0}^\infty\int_{0}^\infty  e^{-2  it(a\zeta'_I+{\zeta'_I}^3-3\tilde\zeta_R ) }\nabla\cdot \left(\widetilde{  s}_c (\overline\zeta'-\zeta')(1-\chi)\frac{\nabla S_0}{|\nabla S_0|^2}\right)\frac{\partial(\zeta'_R,\zeta'_I)}{\partial(\tilde\zeta_R ,\zeta'_I)} d\tilde \zeta_R  d\zeta'_I |   \nonumber\\
+&  \frac Ct|  \int _{-\infty}^0\int_{-\infty}^0  e^{-2  it(a\zeta'_I+{\zeta'_I}^3-3\tilde \zeta_R ) }\nabla\cdot \left(\widetilde{  s}_c(\zeta  )(\overline\zeta'-\zeta')(1-\chi)\frac{\nabla S_0}{|\nabla S_0|^2}\right)\frac{\partial(\zeta'_R,\zeta'_I)}{\partial(\tilde \zeta_R ,\zeta'_I)}d\tilde \zeta_R  d\zeta'_I |   \nonumber
\end{align}where
\begin{multline}\label{E:change-variables}
\qquad|\nabla\cdot \left(\widetilde{  s}_c(\zeta'  )(\overline\zeta'-\zeta')(1-\chi)\frac{\nabla S_0}{|\nabla S_0|^2}\right)\times \frac{\partial(\zeta'_R,\zeta'_I)}{\partial(\tilde \zeta_R ,\zeta'_I)}|_{L^1(d\tilde \zeta_R  d\zeta'_I   )}\\
= |\nabla\cdot \left(\widetilde{  s}_c(\zeta'  )(\overline\zeta'-\zeta')(1-\chi)\frac{\nabla S_0}{|\nabla S_0|^2}\right)|_{L^1(d\zeta'_R d\zeta'_I)}<C.\qquad
\end{multline}
Therefore \eqref{E:u-12} follows from Fubini's theorem and the Riemann-Lebesgue lemma.

\end{proof}

\begin{proposition}\label{P:u-11} {\color{black}Suppose the assumption of Theorem \ref{T:u}} holds. Then, as $t\to+\infty$: 
\begin{gather} 
   u_{1,1}(x)\sim    \frac{2  i e^{i 4\pi     t  r^3    
 }}{3t  }    \widetilde s_c ( + ir   )
-\frac{2  ie^{-i   4\pi t  r^3    
  }}{ 3 t   }    \widetilde s_c(  - ir  )
 +  \mathcal O( {\color{black}t^{- 3/2}}),\  \textit{for $\color{black}a <-\delta<0$},
\label{E:u-11-}\\
   u_{1,1}(x)\sim   \mathcal O( t^{-4/3}),\qquad \textit{for $\color{black}a>+\delta> 0$}.  \label{E:u-11+}
\end{gather}

\end{proposition}

\begin{proof} 
\noindent  {$\blacktriangleright$}  $\underline{\textbf{ Proof  of $\color{black}a <-\delta<0$ :}}$  Write
\begin{align}
  u_{1,1}(x) 
=& -\frac 2{ \pi}\int  d\zeta'_I e^{-2  i t(a\zeta'_I +{\zeta'_I}^3)}\psi_{r,r}(\zeta'_I)(\overline\zeta'-\zeta')\int  d\zeta'_R \ \ e^{ -\pi i t (-\frac 6\pi  \zeta'_I ){\zeta'_R}^2     } \psi_{r,0}(\zeta'_R)  \widetilde{  s}_c(\zeta'  ) .\label{E:u-11-asym-+}
\end{align}

Define the Fourier transforms as $\widehat \phi(\eta'_R,\eta'_I)= \phi^{\wedge_{\zeta'_R}}\phi^{\wedge_{\zeta'_I}}$ where
\be\label{E:fourier}
\begin{gathered}
\phi^{\wedge_{\zeta'_R}}(\eta'_R,\zeta'_I)= \int e^{-2\pi i\zeta'_R\eta'_R}\phi(\zeta'_R,\zeta'_I)d\zeta'_ R,\\ 
\phi^{\wedge_{\zeta'_I}}(\zeta'_R,\eta'_I)= \int e^{-2\pi i\zeta'_I\eta'_I}\phi(\zeta'_R,\zeta'_I)d\zeta'_I  .
\end{gathered}
\ee Setting $f  \equiv\psi_{r,r}(\zeta'_I)\psi_{r,0}(\zeta'_R) (\overline\zeta'-\zeta')\widetilde{  s}_c(\zeta'  )$, applying   Lemma \ref{L:sd-der}, 
$\color{black}u_0\in\mathfrak M^{3,q}$,  and H$\ddot{\mbox{o}}$lder's inequality, we obtain successively: for $0\le j\le 3$,
\begin{gather*}
   | \partial^j_{\zeta'_R}f |_{     L^2(d\zeta'_R)} <C    , \quad
|   (1+ |\eta'_R|^3)   f  ^{\wedge_{\zeta'_R}}|_{ L^2(  d\eta'_R)  }<C ,\quad
 |(1+ {\eta'_R}^2) f   ^{\wedge_{\zeta'_R}} |_{ L^1(d\eta'_R) }<C . 
\end{gather*} 
Hence we can apply  the stationary phase theorem to get
\begin{align}
   u_{1,1}  
=& -\frac 2{ \pi}\frac{1}{\sqrt t}\int d\zeta'_I e^{-2  i t(a\zeta'_I +{\zeta'_I}^3)} e^{ \pi i \frac{\sgn(\zeta'_I)}4}\frac{1}{\sqrt{|\frac 6\pi \zeta'_I|}} \int d\eta'_R\left(1+ \mathcal O(\frac{{\eta'}^2_R}{t|\zeta'_I|})\right)f ^{\wedge_{\zeta'_R}} (\eta'_R,\zeta'_I  ) \label{E:stationary-r}\\
=& -\frac 2{ \pi}\frac{1}{\sqrt t}\int d\zeta'_ I e^{-2  i t(a\zeta'_I +{\zeta'_I}^3)}e^{+\pi i \frac{\sgn(\zeta'_I)}4}\frac{1}{\sqrt{|\frac 6\pi \zeta'_I|}}\psi_{r,r}(\zeta'_I)(\overline\zeta'-\zeta')\widetilde{  s}_c(0,\zeta'_I  )+ \mathcal O(\frac{1}{t^\frac 32}).\nonumber
\end{align}

Setting $g \equiv\psi_{r,r}(\zeta'_I)(\overline\zeta'-\zeta')e^{+\pi i \frac{\sgn(\zeta'_I)}4}\frac{\widetilde{  s}_c(0,\zeta'_I  )}{\sqrt{|\frac 6\pi \zeta'_I|}}  $,   applying   Lemma \ref{L:sd-der}, 
$\color{black}u_0\in\mathfrak M^{3,q}$, and H$\ddot{\mbox{o}}$lder's inequality, for $0\le j\le 3$, we have
\begin{gather} 
|\partial^j_{\zeta'_I}g |_{L^2(  d\zeta'_I)}<C , \quad
| (1+ {\eta'_I}^3)  g ^{\wedge_{\zeta'_I}}|_{L^2(  d\eta'_I)}<C ,\quad
 (1+ {\eta'_I}^2) g ^{\wedge_{\zeta'_I}}( 0, \eta'_I)\in L^1(d\eta'_I)  .\label{E:i-L1} 
\end{gather}

Besides, recall the Airy function
\be\label{E:airy}
Ai(z)=\frac{1}{2\pi}\int_{\mathbb R}e^{i(\frac{s^3}{3}+zs)}ds
\ee
which satisfies
\begin{gather}
|Ai(z)|\le C(1+|z|)^{-\frac{1}4},\quad z\in\mathbb R,\label{E:airy-bdd}\\
Ai(-x)\sim\frac{1}{\sqrt \pi x^\frac 14}\cos\left(\frac 23 x^\frac 32-\frac \pi 4  \right)+\mathcal O( x^{-\frac 74 }),\quad x\to\infty,
\label{E:airy-asy}\\
\left(e^{-2  i t(a\zeta'_I +{\zeta'_I}^3)}\right)^{\wedge_{\zeta'_I}}(-\eta'_I)= 
 \frac{2\pi}{ (6  t)^\frac 13} Ai\left(\frac{(2  t)^\frac 23}{\sqrt[3]{3}} (a-\frac{ \pi\eta'_I}{ t})\right) .\label{E:airy-app}
\end{gather}

Using  \eqref{E:i-L1}, the Fourier multiplication formula, \eqref{E:airy-bdd}, and \eqref{E:airy-app}, \eqref{E:stationary-r} turns into
\begin{align}
   u_{1,1}(x)
= &  -\frac 2{ \pi}\frac{1}{\sqrt t} \int d\eta'_I\left(e^{-2  i t(a\zeta'_I +
{\zeta'_I}^3)}\right)^{\wedge_{\zeta'_I}}(-\eta'_I) g^{\wedge_{\zeta'_I}}( 0,\eta'_I) +  \mathcal O(\frac{1}{t^\frac 32})\label{E:u-11-FirstReduction}\\
= & - \frac{4}{ (6 t)^\frac 13}\frac{1}{\sqrt t}\int  d\eta'_I\ Ai\left(\frac{ (2  t)^\frac 23}{\sqrt[3]{3}} (a-\frac{ \pi\eta'_I}{ t})\right)  g ^{\wedge_{\zeta'_I}}( 0  , \eta'_I) + \mathcal O(\frac{1}{t^\frac32}) . \nonumber
\end{align}

Moreover, let 
\be\label {E:eta-t}
z =\frac{ (2  t)^\frac 23}{\sqrt[3]{3}} (a-\frac{\pi \eta'_I}{ t}),\quad \eta'_I(t)=\frac t\pi(a+\frac{\sqrt[3]{3}}{(2  t)^{\frac 23}}).
\ee Note that $\eta'_I<-\frac tCr^2$ for $\eta'_I<\eta'_I(t)$ and $t\gg 1$. Hence from  \eqref{E:i-L1}, as $t\to\infty$,
\be\label{E:g-four}
\begin{split}
&|\theta(-\frac{t}{C}r^2-\eta'_I)g ^{\wedge_{\zeta'_I}}(0,\eta'_I)|_{L^1(d\eta'_I)} \le  C  {\color{black}t^{- 5/2}} , \qquad |\frac{{\eta'_I}^2}t \cdot g ^{\wedge_{\zeta'_I}}(0,\eta'_I)|_{L^1(d\eta'_I)}\le C t^{-1}  .
\end{split}
\ee  Consequently,  \eqref{E:u-11-FirstReduction} implies
\begin{align}
   u_{1,1}(x)
\le &  - \frac{4}{ (6 t)^\frac 13}\frac{1}{\sqrt t}\int_{\eta'_I>\eta'_I(t)}  d\eta'_I\ Ai\left(\frac{ (2  t)^\frac 23}{\sqrt[3]{3}} (a-\frac{ \pi\eta'_I}{ t})\right)  g ^{\wedge_{\zeta'_I}}( 0  , \eta'_I) + \mathcal O({\color{black}t^{- 3/2}}).\label{E:u-11-2ndReduction}
\end{align}

Finally, for $\eta'_I>\eta'_I(t)$, we have $z<-1$ and the Airy analysis \eqref{E:airy-asy} applies  to \eqref{E:airy-app}. Along with the mean value theorem and \eqref{E:g-four}, yields
\begin{align*}
&  u_{1,1}(x)\\
=& \frac{-2}{ (6  t)^\frac 13\sqrt t}  \int_{\eta'_I>\eta'_I(t)} d\eta'_I
 \frac{e^{i(\frac 23\left|\frac{(2 t)^\frac 23}{\sqrt[3]{3}} |a-\frac{\pi \eta'_I}{ t}|\right|^\frac 32-\frac \pi 4)}+e^{-i(\frac 23\left|\frac{(2t)^\frac 23}{\sqrt[3]{3}} |a-\frac{\pi \eta'_I}{ t}|\right|^\frac 32-\frac \pi 4)}}{\sqrt\pi\left[\frac{(2  t)^\frac 23}{\sqrt[3]{3}} |a-\frac{\pi \eta'_I}{ t}|\right]^\frac 14}g ^{\wedge_{\zeta'_I}}( 0   , \eta'_I) 
 + \mathcal O({\color{black}t^{- 3/2}})\nonumber\\
=&-\frac{2}{ (6  t)^\frac 12}\frac{1}{\sqrt{\pi r t}} \int_{\eta'_I>\eta'_I(t)} d\eta'_I
   \left[e^{i(  4   t  r^3 (1-\frac 32\frac{ \pi \eta'_I}{ t{\color{black} |a|}})+\mathcal O(\frac { {\eta' _I}^2}t) -\frac \pi 4)}+c.c.\right]g ^{\wedge_{\zeta'_I}}( 0, \eta'_I)+ \mathcal O({\color{black}t^{- 3/2}})\nonumber\\
 =& \frac{2  i e^{i 4\pi     t  r^3    
 }}{3t  }    \widetilde s_c ( + ir   )
-\frac{2  ie^{-i   4\pi t  r^3    
  }}{ 3 t   }    \widetilde s_c(  - ir  )
 + \mathcal O({\color{black}t^{- 3/2}})  \nonumber
\end{align*}where c.c. denotes the complex conjugate of the preceding number.
Therefore, we prove \eqref{E:u-11-}.

\vskip.1in
\noindent $\blacktriangleright$ $\underline{\textbf{ Proof  of $\color{black}a>+\delta> 0$ :}}$ Using Lemma  \ref{L:sd-der}, {\color{black} the factor $(\overline\zeta'-\zeta')$,} $\color{black}u_0\in\mathfrak M^{1,q}$, \eqref{E:intro-s-c-ana-c-0}, and integration by parts,
\begin{align}
 u_{1,1}(x)=  &-  \frac 1{ 3\pi t } \int d\zeta'_R\    \int d \zeta'_I\ e^{ 2 \pi i t S_0(a; \zeta')} \psi_{r,0}(\zeta'_I)
\partial_{ \zeta'_R} \left(\frac{1}{\zeta_R'} \psi_{r,r}(\zeta'_R) \widetilde{  s}_c(\zeta'  ) \right). \label{E:u-+-dec}
\nonumber
\end{align}
  Let $g_+= \psi_{r,0}(\zeta'_I)\partial_{ \zeta'_R} \left(\frac{1}{\zeta_R'} \psi_{r,r}(\zeta'_R) \widetilde{  s}_c(\zeta'  ) \right)$. Applying  Lemma  \ref{L:sd-der}, {\color{black}the factor $\chi$,} and $ u_0\in\mathfrak M^{2,q}$, 
  \begin{gather*}
|g_+ |_{L^2(d \zeta'_I)}, \  |
\partial_{\zeta'_I}g_+ |_{L^2(  d\zeta'_I)}, \  |
 g_+^{\wedge_{\zeta'_I}} |_{L^1(  d\eta'_I)}<C.
\end{gather*} Therefore, taking the Fourier transform, and applying the Airy function analysis in the above proof, we obtain:
\be \label{E:u-11-asym---plancherel-0} 
   |u_{1,1} |
 \le C      |   \frac{2\pi}{ (6 t)^\frac 43} \int d\zeta'_R \int  d\eta'_I \, Ai\left(\frac{ (2  t)^\frac 23}{\sqrt[3]{3}} (a-3{\zeta'_R}^2-\frac{ \pi\eta'_I}{ t})\right)   g_+ ^{\wedge_{\zeta'_I}}( \zeta_R'  , \eta'_I) | \le \frac{C.}{t^{\frac 43 }}.  
\ee

\end{proof}

We conclude this subsection by:
\begin{theorem}\label{T:u-1} Suppose that \eqref{E:intro-ID} holds. Then, as $t\to +\infty$,
\begin{align*}
 \blacktriangleright\ & u_{1 }(x)\sim\frac{2  i e^{i 4\pi     t  r^3    
  }}{3t  }     s_c (+\frac{t_2}{3}+ ir   )
-\frac{2  ie^{-i   4\pi t  r^3    
  }}{ 3 t   }     s_c( +\frac{t_2}{3}- ir  )
 + o( t^{-1 }),\  \textit{for $\color{black}a<-\delta< 0$,}\\
 \blacktriangleright\ & u_{1 }(x)\sim o( t^{-1 }), \qquad  \textit{for $\color{black}a>+\delta> 0$.} 
  \end{align*}
\end{theorem}

\section{Long time asymptotics of    $u_{2,0}(x)$}\label{S:m}

Throughout this subsection,   $a$, $r$, $t_i$, $t$ are as defined in Definition \ref{D:stationary} and {\color{black}the assumption of Theorem \ref{T:u} holds.} To adapt the approach of $u_1$ in Section \ref{S:sigma--} to study the asymptotics of $u_{2,0}$, 
 it reduce to analysing  $ (\widetilde m-1)$ and   $\nabla \widetilde m$. From
\begin{align}
 m     =& 1+ \mathcal CT1+\cdots+  \left(\mathcal CT\right)^n1+\cdots,\label{E:m-CIE-t-AYF}  
\end{align} we are led to study the Cauchy integrals $\widetilde{\left(\mathcal CT\right)^n1}$ and their derivatives.
\subsection{The Cauchy integrals}\label{SS:CI-0}

\subsubsection{Representation formulas of the Cauchy integrals}\label{SS:m-pre}  
\begin{lemma}\label{L:wickerhauser}\cite{W87} {\color{black}Suppose the assumption of Theorem \ref{T:u}} holds.
\[
| \partial_{x_1}^j\widetilde{\mathcal CTf }|_{L^\infty}\le   C |f|_{L^\infty}, \quad      j=0,\,1  .
\]

\end{lemma}
\begin{proof} The proof follows from \eqref{E:real-ima-new}, \eqref{E:scatter-new}, and 
\begin{align}
  \partial_{x_1}^j \widetilde{\mathcal CTf }  = &   -\frac{1}{2\pi i}\iint\frac{e^{2\pi i tS_0 }(2\pi i\xi_1')^j\widetilde{  s}_c(\zeta ' ) }{\zeta'-\lambda'}\widetilde f(\zeta ' )  d\overline\zeta'\wedge d\zeta'   \label{E:CT-wickerhauser}\\
{\color{black}=}&   {\color{black} -  \iint \frac{ e^{2\pi i tS_0(a;\zeta'(\xi'_1,\xi'_2))} (2\pi i\xi_1')^j{\color{black}\widetilde{\left[u_0m_0\right]^\wedge (\xi_1,\xi_2,\zeta' )}}} {p_{\lambda'}(\xi'_1,\xi_2')} \widetilde f(\zeta'(\xi'_1 ,\xi'_2 ) )  d\xi'_1  d\xi'_2 },  \nonumber
\end{align}     with
\be\label{E:p}
\begin{gathered}
p_{\lambda'}(\xi'_1,\xi'_2)=(2\pi i\xi'_1+\lambda')^2-(2\pi i\xi'_2+{\lambda'}^2),\\
\left|\frac 1{p_{\lambda'}}\right|_{L^1(\Omega_{\lambda'}, d\xi'_1 d\xi_2')}\le  \frac C{(1+|\lambda'_I|^2)^{1/2}},\quad 
\left|\frac 1{p_{\lambda'}}\right|_{L^2(\Omega_{\lambda'}^c,d\xi'_1 d\xi'_2)}\le \frac C{(1+|\lambda'_I|^2)^{1/4}},  \\
|\xi_1^js_c|_{L^\infty\cap L^2(d\xi_1  d\xi_2 )}\sim |{\xi_1'}^j\widetilde s_c|_{L^\infty\cap L^2(d\xi'_1 d\xi_2')},
\end{gathered}
\ee where  $\Omega_{\lambda'}=\{(\xi'_1,\xi'_2)\in\RR^2 : |p_{\lambda'}(\xi'_1,\xi'_2)|<1\}$.

\end{proof}

To study the long time asymptotics of the Cauchy integrals,   inspired by \cite{AYF83} (cf. \eqref{E:green}), we present new representation formulas for  $\widetilde{(\mathcal CT)^n1}$ in Lemma \ref{L:CIO} and \ref{L:dCIO}. 

\begin{lemma}\label{L:CIO} If  {\color{black}  the assumption of Theorem \ref{T:u}} holds  then 
\begin{align}
 &\widetilde{\mathcal CT1 } (t_1,t_2,t;\lambda')
=   e^{ i\pi tS_0(a;\lambda') } \iint dx_1'dx_2' \    [u_0  \mathfrak m_0  ](x'_1-\frac{2t_2}{3}x'_2,x'_2 )  e^{i\lambda'_I(x_1'+2\lambda'_Rx_2')} \label{E:CT1-prime}\\
& \quad\times\int d\xi_1''   e^{2\pi i t \mathfrak G^\sharp}\mathcal F(t;\lambda';x_2';\xi_1'')\equiv e^{ i\pi tS_0(a;\lambda') }[\mathfrak C \mathfrak T1]^{0,(1)}\equiv e^{ i\pi tS_0(a;\lambda') } \mathfrak C \mathfrak T_{0,(1)} 1  \nonumber
\end{align} is holomorphic in $\lambda'_R\lambda'_I$ when ${\lambda'}_I\ne 0$.
 Here
\be \label{E:M} 
\begin{gathered}
\mathfrak m_0 (x'_1,x'_2 )-1=\iint ( m_0(x_1,x_2;\overline{\zeta(\xi_1,\xi_2)})-1)^{\wedge_{x_1,x_2}} e^{2\pi i(x'_1\xi_1+ x'_2\xi_2)}d\xi_1 d\xi_2,\\
|\partial_{x_1'}^j(\mathfrak m_0-1)|_{L^\infty}\le |  \left(\partial^j_{x_1}(m_0(x_1,x_2;\overline{\zeta(\xi_1,\xi_2)})-1)\right)^{\wedge_{x_1,x_2}}|_{L^1(d\xi_1 d\xi_2)}\le C ,\      
\end{gathered}
\ee  and $j=0,1$, with $\theta$  being the Heaviside function,
\begin{align}
&e^{2\pi it\mathfrak S ^\sharp(a,t;x_1',x_2';\lambda'_R;\xi_1'')}= e^{2\pi i   t[4 \pi^2\xi_1''^3 +(a-3{\lambda'}_R^2 -\frac{ x_1'+ 2{\lambda'}_Rx_2'}{t} )\xi_1''] } =e^{2\pi i t\mathfrak S }e^{-2\pi i (x_1'+2\lambda'_R x_2')\xi_1''},\nonumber\\
&\mathfrak S(a;\lambda_R';\xi_1'')=4 \pi^2\xi_1''^3 +(a-3{\lambda'}_R^2  )\xi_1'',\label{E:tilde-s-0}\\
&\mathcal F(t;\lambda';x_2';\xi_1'')= {\color{black}-\sgn(x_2'+3t\lambda_R')}  \theta(-( x_2'+3  t\lambda'_R)(\xi_1''-\frac{\lambda'_I}{2\pi})  (\xi_1''+\frac{\lambda'_I}{2\pi}  ))\nonumber\\
 &\hskip1.5in\times    e^{4\pi^2 (  x_2'+3  t\lambda'_R)(\xi_1''-\frac{\lambda'_I}{2\pi})  (\xi_1''+\frac{\lambda'_I}{2\pi}  )}.\nonumber
\end{align}

\end{lemma}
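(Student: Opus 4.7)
The strategy is to begin with the Wickerhauser identity \eqref{E:CT-wickerhauser} for $j=0$ and $f\equiv 1$, restore the Fourier representation of $\widetilde s_c$ in terms of $u_0 m_0$, interchange the spatial and spectral integrals by Fubini, and then evaluate the inner $\xi_2'$-integral by contour deformation, thereby collapsing the two-dimensional spectral integral to a one-dimensional Airy-type integral in a shifted variable $\xi_1''$.

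Starting from
\[\widetilde{\mathcal CT1}(x,\lambda')=-\pi\iint \frac{\widetilde{(u_0m_0)^{\wedge_{x_1,x_2}}}(\zeta'(\xi_1',\xi_2'))\,e^{2\pi it S_0(a;\zeta'(\xi_1',\xi_2'))}}{p_{\lambda'}(\xi_1',\xi_2')}\,d\xi_1'd\xi_2',\]
the change of variable $(x_1,x_2)=(x_1'-\tfrac{2t_2}{3}x_2',\,x_2')$ absorbs the shift $\xi_2=\xi_2'+\tfrac{2t_2}{3}\xi_1'$ that distinguishes $\zeta$ from $\zeta'$, while folding the $\overline\zeta$-dependence of $m_0$ through the definition \eqref{E:M} converts $u_0(\cdot)m_0(\cdot;\overline{\zeta(\xi)})$ into $[u_0\mathfrak m_0](x_1'-\tfrac{2t_2}{3}x_2',x_2')$ times a pure Fourier phase $e^{-2\pi i(x_1'\xi_1'+x_2'\xi_2')}$ in $(\xi_1',\xi_2')$. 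An application of Fubini, justified by the bounds in \eqref{E:p} and $u_0\in\mathfrak M^{2,q}$, leaves as inner object
\[I(\lambda';x_1',x_2')=\iint\frac{e^{2\pi it S_0(a;\zeta'(\xi_1',\xi_2'))-2\pi i(x_1'\xi_1'+x_2'\xi_2')}}{p_{\lambda'}(\xi_1',\xi_2')}\,d\xi_1'd\xi_2'.\]

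For fixed $\xi_1'$, the integrand $I$ is meromorphic in $\xi_2'$ with a simple pole at $\xi_2^*=2\xi_1'\lambda'+2\pi i\xi_1'^2$, while the $\xi_2'$-dependence of the exponent is quadratic via $S_0\supset -\tfrac 34\xi_2'^2/\xi_1'$ and linear via $-2\pi i x_2'\xi_2'$; completing the square in $\xi_2'$ shows that the natural contour closure is governed by $\sgn(x_2'+3t\lambda_R')$, which determines the orientation and yields the residue with weight $(-\pi)\sgn(x_2'+3t\lambda_R')$. Evaluating $S_0$ at $\xi_2^*$ produces the polynomial $a\xi_1'-3\lambda'^2\xi_1'+4\pi^2\xi_1'^3-6\pi i\lambda'\xi_1'^2$ in $\xi_1'$, whose quadratic term is cleared by $\xi_1'=\xi_1''-\lambda_I'/(2\pi)$; the remaining exponent then reorganizes algebraically into
\[2\pi it\,\mathfrak S^\sharp \;+\; i\pi t S_0(a;\lambda')\;+\;i\lambda_I'(x_1'+2\lambda_R'x_2')\;+\;4\pi^2(x_2'+3t\lambda_R')\bigl(\xi_1''-\tfrac{\lambda_I'}{2\pi}\bigr)\bigl(\xi_1''+\tfrac{\lambda_I'}{2\pi}\bigr),\]
with the Heaviside factor in $\mathcal F$ marking precisely the region where the last (real) exponent is non-positive, so that the contour-closure gives an absolutely convergent result.

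Holomorphy of $\widetilde{\mathcal CT1}$ in $\lambda'$ on $\{\lambda_I'\neq 0\}$ is inherited from the analytic dependence of $\xi_2^*(\lambda')$ and the locally constant orientation of the contour; the bound $|\widetilde{\mathcal CT1}|\le C\epsilon_0$ then follows by taking absolute values, using $|u_0\mathfrak m_0|_{L^1}\le C\epsilon_0$ from \eqref{E:M}, $|e^{2\pi it\mathfrak S^\sharp}|=1$, and the integrability of $\mathcal F$ in $\xi_1''$ guaranteed by the Heaviside-enforced Gaussian-type decay. The main obstacle will be the careful sign bookkeeping---threading $\sgn(\zeta_I')=-\sgn(\xi_1')$ from the definition of $\widetilde s_c$ through the orientation of the $\xi_2'$-contour so that they combine into the $\sgn(x_2'+3t\lambda_R')$ appearing in $\mathcal F$---together with the recognition that the superficially unbounded amplitude $e^{4\pi^2(x_2'+3t\lambda_R')(\xi_1''^2-(\lambda_I'/2\pi)^2)}$ is in fact tamed exactly on the Heaviside-selected region, so that the representation is uniformly convergent.
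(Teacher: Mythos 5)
Your formal derivation follows the paper's route step for step: you start from the Wickerhauser formula \eqref{E:CT-wickerhauser}, rewrite $\widetilde s_c$ through $u_0 m_0$ and fold the $\overline\zeta$-dependence into $\mathfrak m_0$ via \eqref{E:M}, evaluate the $\xi_2'$-integral by residue at $\xi_2^*=2\pi i\xi_1'^2+2\xi_1'\lambda'$ with orientation fixed by $\sgn(x_2'+3t\lambda_R')$ (the paper phrases this through the Hilbert transform and Sokhotski--Plemelj, but it is the same computation), and then clear the quadratic term with $\xi_1'=\xi_1''-\lambda_I'/(2\pi)$ to reach $\mathfrak S^\sharp$ and $\mathcal F$. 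That part is correct and matches \eqref{E:CT-nabla}--\eqref{E:fourier-1}.

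There is, however, a genuine gap in how you close the argument. You assert that the bound $|\widetilde{\mathcal CT1}|\le C\epsilon_0$ follows ``by taking absolute values'' using $|e^{2\pi it\mathfrak S^\sharp}|=1$ and ``the integrability of $\mathcal F$ in $\xi_1''$ guaranteed by the Heaviside-enforced Gaussian-type decay.'' This fails: the Gaussian damping in $\mathcal F$ degenerates as $x_2'+3t\lambda_R'\to 0$. Writing $A=x_2'+3t\lambda_R'$, on the Heaviside-selected region one has $\int|\mathcal F|\,d\xi_1''\sim |A|^{-1/2}$ when $A<0$ (and $\sim|\lambda_I'|$ when $A>0$), so $\mathcal F$ is \emph{not} uniformly integrable in $\xi_1''$ and absolute values alone do not give a bound uniform in $(x_1',x_2',\lambda')$ — nor do they show the $\xi_1''$-integral even converges in the limiting case $A=0$, where it reduces to a conditionally convergent Airy-type integral. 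The paper supplies exactly this missing step: it proves the uniform convergence of $\lim_{x_2'+3t\lambda_R'\to 0^\pm}\int d\xi_1''\,e^{2\pi it[4\pi^2\xi_1''^3+(\cdots)\xi_1'']}$ by splitting into the three regimes $I$, $II$, $III$ of \eqref{E:rep-airy} according to the size of $a+3\lambda_R'^2-x_1'/t$ and integrating by parts against the cubic oscillatory factor $e^{2\pi it\mathfrak S}$ away from its stationary points (estimates \eqref{E:I}--\eqref{E:III}). The uniform bound $C$ on the $\xi_1''$-integral obtained there, combined with $|u_0\mathfrak m_0|_{L^1}\le C\epsilon_0$ from \eqref{E:M}, is what yields $|\widetilde{\mathcal CT1}|\le C\epsilon_0$; your proposal needs to incorporate this oscillatory-integral analysis (or an equivalent stationary-phase/van der Corput bound uniform in the linear coefficient) rather than relying on the amplitude's decay.
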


\begin{proof} Using \eqref{E:real-ima-new}, Lemma \ref{L:wickerhauser},    the Fourier transform theory, $\exp{(+2\pi it(\pi^2{\xi'_1}^3-\frac 34\frac{{\xi'_2}^2}{\xi'_1}))}$ is holomorphic in $\xi'_2 $ when $\xi_1' \ne 0$ (i.e., holomorphic in $\zeta'_R\zeta_I'$ when $\zeta_I'\ne 0$), and the residue theorem, we formally derive
\be \label{E:CT-nabla}
 \widetilde{\mathcal CT1 }  =     {\color{black} -\iint} [\,\frac{e^{+2\pi it(\pi^2{\xi'_1}^3-\frac 34\frac{{\xi'_2}^2}{\xi'_1})}}{p_{\lambda'}(\xi'_1,\xi'_2)}\,]^{\vee_{\xi'_1,\xi'_2}}(ta-x_1',-x_2')  [u_0\mathfrak m_0] ( x'_1-\frac {2t_2}3x_2',x'_2 )  dx_1'dx_2',  
\ee where $\mathfrak m_0$ satisfies \eqref{E:M} 
(see Lemma \ref{L:I-app} in the Appendix for the proof) and
\begin{align}
& [\ \frac{e^{+2\pi it(\pi^2{\xi'_1}^3-\frac 34\frac{{\xi'_2}^2}{\xi'_1})}}{p_{\lambda'}({\xi'_1},{\xi'_2})} \ ]^{\vee_{{\xi'_1},{\xi'_2}}}(ta-x_1',-x_2')\label{E:inverse-fourier}\\
=&-\frac{1}{2\pi i}\int d{\xi'_1} \int d{\xi'_2} \frac{e^{ 2\pi i [+t(\pi^2{\xi'_1}^3-\frac 34\frac{{\xi'_2}^2}{\xi'_1})+[(ta-x_1'){\xi'_1}-x_2'{\xi'_2}] ]}}{{\xi'_2} -(2\pi i{\xi'_1}^2+2{\xi'_1}\lambda') }\nonumber\\
\equiv& \frac1{2i}\int d{\xi'_1}\ H_{2\pi i{\xi'_1}^2+2{\xi'_1}\lambda'} (e^{ 2\pi i [+t(\pi^2{\xi'_1}^3-\frac 34\frac{{\xi'_2}^2}{\xi'_1})+[(ta-x_1'){\xi'_1}-x_2'{\xi'_2}]\ ]}) .\nonumber
\end{align}Here we have used \eqref{E:p}, and
\be\label{E:hilbert-transform}
H_{s}(u) =\frac{1}{\pi}\int_{-\infty}^\infty \frac{u({\xi'_2})}{s-{\xi'_2}} d{\xi'_2} 
\ee which is holomorphic in $s\in\mathbf C^\pm$, and satisfies $
 H_{s^+}(u) -H_{s^-}(u)  =-2iu(s)$  for $ s\in\RR$. 
Using  the discontinuity is measure zero in ${\xi'_1}$,  the residue theorem, $\xi_1'=\xi_1''-\frac{\lambda'_I}{2\pi}$, 
\begin{align*}
&2\pi i \left[(ta-x_1'){\xi'_1}+t\pi^2{\xi'_1}^3-x_2'{\xi'_2}-t\frac 34\frac{{\xi'_2}^2}{\xi'_1}\right]_{{\xi'_2}=2\pi i{\xi'_1}^2+2{\xi'_1}\lambda'} \\
=&2\pi i   t[4 \pi^2\xi_1''^3 +(
a-3{\lambda'}_R^2  )\xi_1''-\frac{\lambda'_I}{2\pi}(a-3{\lambda'}_R^2+{\lambda'}_I^2)] \\
-&2\pi i(   x_1'+ 2\lambda'_Rx_2')(\xi_1''-\frac{\lambda'_I}{2\pi})+4\pi^2 (  x_2'+3  t\lambda'_R)(\xi_1''-\frac{\lambda'_I}{2\pi})  (\xi_1''+\frac{\lambda'_I}{2\pi}  ),   
\end{align*}and 
\[
\sgn\left(\mathfrak{Im}(2\pi i{\xi'_1}^2+2{\xi'_1}\lambda')\right)=\sgn((\xi_1''-\frac{\lambda'_I}{2\pi})  (\xi_1''+\frac{\lambda'_I}{2\pi}  ))=-\sgn(x_2'+3t\lambda_R') 
\]on the support of $\theta(-(  x_2'+3  t\lambda'_R)(\xi_1''-\frac{\lambda'_I}{2\pi})  (\xi_1''+\frac{\lambda'_I}{2\pi}  ))$, we obtain
\begin{align}
&\left[\frac{e^{-2\pi it(\pi^2{\xi'_1}^3-\frac 34\frac{{\xi'_2}^2}{\xi'_1})}}{p_{\lambda'}({\xi'_1},{\xi'_2})}\right]^{\vee_{{\xi'_1},{\xi'_2}}}(ta-x_1',-x_2')=\sgn(x_2'+3t\lambda_R')  \label{E:fourier-1}\\
\times&  e^{-it( a \lambda'_I +{\lambda'}_I ^3  -3 \lambda'_I{\lambda'}_R^2    )}  e^{-2\pi i(x_1'+2\lambda'_R x_2')(-\frac{\lambda'_I}{2\pi})}\int d\xi_1'' e^{2\pi i   t[4 \pi^2\xi_1''^3 +(a-3{\lambda'}_R^2 -\frac{x_1'+2\lambda'_Rx_2'}{t} )\xi_1''] }
\nonumber\\
\times &  \theta(-(  x_2'+3 t\lambda'_R)(\xi_1''-\frac{\lambda'_I}{2\pi})  (\xi_1''+\frac{\lambda'_I}{2\pi} ) ) e^{4\pi^2 ( x_2'+3 t\lambda'_R)(\xi_1''-\frac{\lambda'_I}{2\pi})  (\xi_1''+\frac{\lambda'_I}{2\pi}  )}.
\nonumber
\end{align}
Plugging \eqref{E:fourier-1} into \eqref{E:CT-nabla}, we justify \eqref{E:CT1-prime}, \eqref{E:tilde-s-0}, and holomorphicy in $\lambda'_R\lambda'_I$ when $\lambda'_I\ne 0$ formally.

For the rigorous analysis,  we first show the  uniform boundedness  when   $\mathcal F$ fails to decay :
\begin{align}
C\ge&\lim_{ x_2'+3 t\lambda'_R \to 0^\pm }\int d\xi_1'' e^{2\pi i   t[4 \pi^2\xi_1''^3 +(a-3{\lambda'}_R^2 -\frac{x_1'+2\lambda'_Rx_2'}{t} )\xi_1''] } 
\label{E:rep-airy}\\
\times &  \theta(-(  x_2'+3 t\lambda'_R)(\xi_1''-\frac{\lambda'_I}{2\pi})  (\xi_1''+\frac{\lambda'_I}{2\pi} ) ) e^{4\pi^2 ( x_2'+3 t\lambda'_R)(\xi_1''-\frac{\lambda'_I}{2\pi})  (\xi_1''+\frac{\lambda'_I}{2\pi}  )}\nonumber\\
=&  \int d\xi_1'' e^{2\pi i   t[4 \pi^2\xi_1''^3 +(a+3{\lambda'}_R^2 -\frac{x_{1 }' }{t} )\xi_1''] }\theta((a+3{\lambda'}_R^2 -\frac{x_{1 }' }{t} )-1)  \nonumber\\
+& \int d\xi_1'' e^{2\pi i   t[4 \pi^2\xi_1''^3 +(a+3{\lambda'}_R^2 -\frac{x_{1 }' }{t} )\xi_1''] }\theta(1-|a+3{\lambda'}_R^2 -\frac{x_{1 }'}{t} |)   \nonumber\\
+& \int d\xi_1'' e^{2\pi i   t[4 \pi^2\xi_1''^3 +(a+3{\lambda'}_R^2 -\frac{x_{1 }' }{t} )\xi_1''] }\theta(-1-(a+3{\lambda'}_R^2 -\frac{x_{1 }'}{t} ))   
\equiv   I+II+III.\nonumber
\end{align}

Integration by parts, using $(a+3{\lambda'}_R^2 -\frac{x_{1 }'
}{t} )>1$,  we obtain  
$
|I|\le C$. 
Similarly,   
\begin{align}
 |II|
\le &|\int d\xi_1'' e^{2\pi i   t[4 \pi^2\xi_1''^3 +(a+3{\lambda'}_R^2 -\frac{x_{1 }' }{t} )\xi_1''] }\theta(1-|a+3{\lambda'}_R^2 -\frac{x_{1 }'}{t} |)\theta(1-|\xi_1''|)\label{E:II}\\
+&|\int d\xi_1'' e^{2\pi i   t[4 \pi^2\xi_1''^3 +(a+3{\lambda'}_R^2 -\frac{x_{1 }' }{t} )\xi_1''] }\theta(1-|a+3{\lambda'}_R^2 -\frac{x_{1 }'}{t} |)\theta(|\xi_1''|-1)\le C ,\nonumber
\\
 |III|
\le &|\int d\xi_1'' e^{2\pi i   t[4 \pi^2\xi_1''^3 +(a+3{\lambda'}_R^2 -\frac{x_{1 }' }{t} )\xi_1''] }\theta(-1-(a+3{\lambda'}_R^2 -\frac{x_{1 }'
}{t} ))\psi_{1,\rho}(\xi_1'')\label{E:III}\\
+&|\int d\xi_1'' e^{2\pi i   t[4 \pi^2\xi_1''^3 +(a+3{\lambda'}_R^2 -\frac{x_{1 }' }{t} )\xi_1''] }\theta(-1-(a+3{\lambda'}_R^2 -\frac{x_{1 }'}{t} ))(1-\psi_{1,\rho}(\xi_1''))\le C  \nonumber
\end{align}  
by letting $\pm\rho=\pm\left[|a+3{\lambda'}_R^2 -\frac{x_{1 }' }{t} ) |\right]^{1/2}$, using   integration by parts and $|\partial_{\xi_1''}\mathfrak S^\sharp|>1/C$ for the second terms. Combining $I$-$III$,   the uniform boundedness of \eqref{E:rep-airy} is proved.

 Therefore, assuming $ u_0  \in {\mathfrak M^{0,q}}$, and using the estimate \eqref{E:M} (see Lemma \ref{L:I-app} in the Appendix), the representation formula \eqref{E:CT1-prime} holds rigorously and is holomorphic in $\lambda'_R\lambda'_I$ when $\lambda'_I\ne 0$. 

\end{proof}

To apply an inductive argument to derive the representation formulas for $\widetilde{(\mathcal CT)^n1}$, particularly in generalizing the reasoning used in \eqref{E:rep-airy}, we require:
\begin{lemma}\label{L:lambda-i-0}
If  {\color{black}  the assumption of Theorem \ref{T:u}} holds for $ u_0  \in {\mathfrak M^{1,q}}$, then we have:
\be |\partial_{\lambda'_I}    \left[\mathfrak C \mathfrak T1\right]^{0, (1)}| \le  C(1+|\lambda_R'|).      \label{E:0-x-CIO-neg-stationary-1-(1)} 
\ee
\end{lemma}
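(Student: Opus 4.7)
The plan is to differentiate the representation formula \eqref{E:CT1-prime} directly under the integral sign. The variable $\lambda'_I$ enters the integrand of $[\mathfrak C\mathfrak T 1]^{0,(1)}$ in two places only: (i) the phase prefactor $e^{i\lambda'_I(x_1'+2\lambda'_R x_2')}$, and (ii) the amplitude $\mathcal F$ via the combination $B:=\xi_1''^2 - \lambda_I'^2/(4\pi^2)$. Crucially, the Airy-type phase $\mathfrak S^\sharp$ displayed in Lemma \ref{L:CIO} is independent of $\lambda'_I$, so the derivative never introduces a factor of $t$---this is the decisive structural feature of the new representation and the main reason the estimate is possible.

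For contribution (i), differentiation of the prefactor produces the factor $i(x_1'+2\lambda'_R x_2')$. After the substitution $(y_1,y_2)=(x_1'-\tfrac{2t_2}{3}x_2',\,x_2')$ so that $[u_0\mathfrak m_0]$ becomes a function of $(y_1,y_2)$ alone, this factor is majorised by $C(1+|\lambda'_R|)(|y_1|+|y_2|)$ on the cone of interest (where $|t_2|$ is bounded). I then combine the decay $u_0\in\mathfrak M^{2,q}$ with the bound $|\mathfrak m_0|_{L^\infty}\le C$ from \eqref{E:M} and the uniform bound $\bigl|\int e^{2\pi i t\mathfrak S^\sharp}\mathcal F\,d\xi_1''\bigr|\le C$ (a direct strengthening of the uniform-convergence argument in the proof of Lemma \ref{L:CIO}, since $|\mathcal F|\le \pi$ and the extra factor $e^{4\pi^2 AB}\le 1$ on the support $-AB\ge 0$ only helps), to conclude that this contribution is bounded by $C(1+|\lambda'_R|)\epsilon_0$.

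For contribution (ii), I split $\partial_{\lambda'_I}\mathcal F$ into a distributional piece coming from $\partial_{\lambda'_I}\theta(-AB)$ (with $A=x_2'+3t\lambda'_R$), supported on $\xi_1''=\pm|\lambda'_I|/(2\pi)$ where $B=0$ and hence $e^{4\pi^2 AB}=1$, plus a smooth piece $2\pi A\lambda'_I\,\theta(-AB)e^{4\pi^2 AB}$. The distributional piece is harmless: the chain-rule Jacobian $|\partial_{\xi_1''}B|=|\lambda'_I|/\pi$ precisely cancels the factor $|A|$, and $\xi_1''$-integration yields the uniformly bounded expression $-\tfrac{\sgn(\lambda'_I)}{2}\bigl[e^{2\pi i t\mathfrak S^\sharp(|\lambda'_I|/(2\pi))}+e^{2\pi i t\mathfrak S^\sharp(-|\lambda'_I|/(2\pi))}\bigr]$. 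The main obstacle lies in the smooth piece, because $|A|$ may grow linearly with $t$; the goal is to extract a $t$-independent bound for $A\lambda'_I J$, where $J:=\int e^{2\pi i t\mathfrak S^\sharp}\theta(-AB)e^{4\pi^2 AB}d\xi_1''$. I plan a dichotomy on the parameter $|A|\lambda_I'^2$: in the regime $|A|\lambda_I'^2\lesssim 1$, the trivial bound $|J|\le |\lambda'_I|/\pi$ on the support of length $|\lambda'_I|/\pi$ immediately yields $|A\lambda'_I J|\le C$; in the regime $|A|\lambda_I'^2\gg 1$, the Gaussian factor $e^{-4\pi^2|A|(\lambda_I'^2/(4\pi^2)-\xi_1''^2)}$ concentrates the integrand in two neighbourhoods of width $\sim 1/(|A||\lambda'_I|)$ about $\xi_1''=\pm|\lambda'_I|/(2\pi)$, giving $|J|\lesssim 1/(|A||\lambda'_I|)$ and thus again $|A\lambda'_I J|\le C$. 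Combining this pointwise bound with $|\mathfrak m_0|_{L^\infty}\le C$ and $u_0\in L^1$ yields a $C\epsilon_0$ contribution; adding the bound from (i) produces $|\partial_{\lambda'_I}[\mathfrak C\mathfrak T 1]^{0,(1)}|\le C(1+|\lambda'_R|)\epsilon_0$, as claimed.
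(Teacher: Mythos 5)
Your proposal is correct and follows the same overall strategy as the paper: differentiate the representation formula \eqref{E:CT1-prime} under the integral sign (exploiting that $\mathfrak S^\sharp$ is independent of $\lambda'_I$) and treat separately the prefactor term, the delta term from the Heaviside factor, and the term carrying the dangerous factor $\lambda'_I(x_2'+3t\lambda'_R)$ --- exactly the paper's decomposition into $I_1+I_2+I_3$ in \eqref{E:0-d-i-CT1-prime}. The execution differs in two sub-steps: for the prefactor term the paper absorbs the weight $x_k'$ into a new admissible potential $u_{\sharp,k}$ via \eqref{E:0-new-potential} and reinvokes Lemma \ref{L:wickerhauser}, whereas you bound the triple integral directly using the uniform boundedness of the inner $\xi_1''$-integral together with $u_0\in\mathfrak M^{2,q}$ and $|\mathfrak m_0|_{L^\infty}\le C$, which is more elementary and avoids the forward reference to Proposition \ref{P:cio-0}; for the amplitude term the paper rewrites $\theta(-AB)\lambda'_I A e^{4\pi^2 AB}$ as a total $\xi_1''$-derivative of a bounded decaying exponential (\eqref{E:0-III-in-dec}), while you run a dichotomy on $|A|\lambda_I'^2$ with a Gaussian-concentration estimate; both arguments deliver the same uniform bound $|A\lambda'_I|\int\theta(-AB)e^{4\pi^2AB}\,d\xi_1''\le C$. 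One small imprecision: your ``trivial bound $|J|\le|\lambda'_I|/\pi$'' in the regime $|A|\lambda_I'^2\lesssim 1$ only applies when $A=x_2'+3t\lambda'_R>0$, since for $A<0$ the support $\{B>0\}$ is unbounded; there you must still invoke the Gaussian factor, which gives $|J|\le Ce^{|A|\lambda_I'^2}|A|^{-1/2}$ and hence $|A\lambda'_I J|\le C\sqrt{|A|\lambda_I'^2}\le C$, so the conclusion is unaffected.
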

\begin{proof}  
  From \eqref{E:CT1-prime},
\begin{align}
&|  \partial_{\lambda'_I}    \left[\mathfrak C \mathfrak T1\right]^{0,(1)}|_{L^\infty}\label{E:0-d-i-CT1-prime}\\
\le& C |\iint dx_{1 }'dx_{2 }' \      (x_{1 }'+2x_{2 }' \lambda'_R )    [u_0  \mathfrak m_0  ](x'_{1 }-\frac{2t_2}{3}x'_{2 },x'_{2 } )    e^{ i\lambda'_I(x_{1 }'+2\lambda'_Rx_{2 }')}  \int  d\xi_1''e^{ 2\pi i   t\mathfrak G^\sharp }\mathcal F     |\nonumber\\
+&C   \iint dx_{1 }'dx_{2 }' \     |   [u_0  \mathfrak m_0  ](x'_{1 }-\frac{2t_2}{3}x'_{2 },x'_{2 } ) |  \nonumber\\
+&  C   \iint dx_{1 }'dx_{2 }' | [u_0  \mathfrak m_0  ](x'_{1 }-\frac{2t_2}{3}x'_{2 },x'_{2 } )   | |\int  d\xi_1''e^{ 2\pi i   t\mathfrak G^\sharp } \theta(-(3  t\lambda'_R+ x_{2 }')(\xi_1''-\frac{\lambda'_I}{2\pi})  (\xi_1''+\frac{\lambda'_I}{2\pi}  ))\nonumber\\
\times&
      \lambda'_I(  x_{2 }'+3  t\lambda'_R)   e^{\ 4\pi^2 (  x_{2 }'+3  t\lambda'_R)(\xi_1''-\frac{\lambda'_I}{2\pi})  (\xi_1''+\frac{\lambda'_I}{2\pi}  )}  | 
\equiv   I _{1} + I _{2}+   I _{3}.\nonumber
\end{align}

Applying   $u_0\in\mathfrak M^{1,q}$, we obtain
\be\label{E:0-III-+-ij12} 
\color{black}
  I  _{1}  
\le   C(1+|\lambda_R'|),\quad I _{2}\le C   .   \ee
For $ I _{3} $,  notice  that
{\color{black}\begin{align*}
  &  |\int_{\xi_1''\lambda'_I\gtrless 0} d\xi_1''\theta(-(x_{2 }'+3  t\lambda'_R)(\xi_1''-\frac{\lambda'_I}{2\pi})  (\xi_1''+\frac{\lambda'_I}{2\pi}  ))
     \lambda'_I(  x_{2 }'+3  t\lambda'_R)  e^{ 4\pi^2 (x_{2 }'+3  t\lambda'_R)(\xi_1''-\frac{\lambda'_I}{2\pi})  (\xi_1''+\frac{\lambda'_I}{2\pi}  )} |\\ 
     \le & 
  |\int_{\xi_1''\lambda'_I\gtrless 0} d\xi_1''\theta(\mp (x_{2 }'+3  t\lambda'_R)(\xi_1''\mp \frac{\lambda'_I}{2\pi})\lambda_I') )
     \lambda'_I(  x_{2 }'+3  t\lambda'_R) e^{ \pm 2\pi  (x_{2 }'+3  t\lambda'_R)(\xi_1''\mp \frac{\lambda'_I}{2\pi})\lambda_I'    } |\\
\le & 
 C |\int_{\xi_1''\lambda'_I\gtrless 0} d\xi_1''\theta(\mp (x_{2 }'+3  t\lambda'_R)(\xi_1''\mp \frac{\lambda'_I}{2\pi})\lambda_I')\partial_{\xi_1''} e^{ \pm 2\pi  (x_{2 }'+3  t\lambda'_R)(\xi_1''\mp \frac{\lambda'_I}{2\pi})\lambda_I'    } |
 \le C.  \nonumber  
\end{align*}}    Hence 
\be\label{E:0-III-+-ij1}  
   I _{ 3} 
\le   C    .
\ee


\end{proof}


\begin{lemma}\label{L:dCIO} If  {\color{black}  the assumption of Theorem \ref{T:u}} holds for $ u_0  \in {\mathfrak M^{1,q}}$ and $\color{black}n\ge 2$,    then 
  
\begin{align}
 &  \widetilde{(\mathcal C T )^n1}(t_1,t_2,t;\lambda')  = e^{\beta_ni\pi   tS_0(a;\lambda')}\left[\mathfrak C \mathfrak T 1\right]^{0,(n)} (t_1,t_2,t;\lambda') \label{E:ct-n}
\end{align}  where 
\begin{align*}
   \left[ \mathfrak C \mathfrak T 1 \right]^{0,(n)}& (t_1,t_2,t;\lambda') 
=  \iint  dx_{1,n}'dx_{2,n}'[u_0  \mathfrak m_0  ](x'_{1,n}-\frac{2t_2}{3}x'_{2,n},x'_{2,n} ) e^{\beta_n i\lambda'_I(x_{1,n}'+2\lambda'_Rx_{2,n}')}\nonumber \\
 \times &\int d\xi_n'' e^{\beta_n2\pi i t \mathfrak G^\sharp(a,t;x'_{1,n},x'_{2,n};\lambda_R';\xi_n'')}  
   \mathcal F^{(n)} \left[ \mathfrak C \mathfrak T 1 \right]^{0,(n-1)} (t_1,t_2,t;   {\lambda'_R}+2\pi i\xi_n'') \\
  \equiv & \mathfrak C \mathfrak T_{0,(n)}  \left[ \mathfrak C \mathfrak T 1 \right]^{0,(n-1)} (t_1,t_2,t;   {\lambda'_R}+2\pi i\xi_n''),
\end{align*}and
\begin{gather}
   \left[ \mathfrak C \mathfrak T 1 \right]^{0,(0)}=  1 , \  x_{1,1}'=x_1',\ x_{2,1}'=x_2',\nonumber\\
    \frac 12\le\beta_n=\frac 12(2-\beta_{n-1})\le 1 ,\ 
    \beta_1=1,\nonumber\\
\mathcal F^{(n)}(t;\lambda'; x_{2,n}';\xi_n'')= {\color{black}-\sgn(x_{2,n}'+3t\lambda_R')}\theta(-(  x_{2,n}'+3  t\lambda'_R)(\xi_n''-\frac{\lambda'_I}{2\pi})  (\xi_n''+\frac{\lambda'_I}{2\pi}  )) \nonumber\\
\times    e^{\beta_n4\pi^2 (  x_{2,n}'+3  t\lambda'_R)(\xi_n''-\frac{\lambda'_I}{2\pi})  (\xi_n''+\frac{\lambda'_I}{2\pi}  )} .\nonumber
\end{gather} 

Moreover, $\widetilde{(\mathcal C T )^n1} $ is holomorphic in $\lambda'_R\lambda'_I$ when ${\lambda'}_I\ne 0$, and  
\begin{gather}
|\partial_{\lambda'_I}\left[\mathfrak C \mathfrak T1\right]^{0, (n)}| \le  C(1+|\lambda_R'|).  \label{E:0-x-CIO-neg-stationary-1}  
\end{gather}

\end{lemma}
\begin{proof}
Once \eqref{E:ct-n} is established, the proof of \eqref{E:0-x-CIO-neg-stationary-1} can be established using the same argument as that for Lemma \ref{L:lambda-i-0}. Hence it is sufficient to justify \eqref{E:ct-n}.

Using Lemma \ref{L:wickerhauser}, \ref{L:CIO}, 
\[
\overline{\zeta'}|_{\xi_2'=2\pi i{\xi_1'}^2+2\xi_1'\lambda',\ \xi_1'=\xi_n''-\frac{\lambda_I'}{2\pi}}=
 \lambda_R'+2\pi i\xi_n'' ,\] and an induction,  formally we obtain: 
\begin{align*}
 & \widetilde{(\mathcal C T )^n1 }(t_1,t_2,t;    \lambda' )\\
=& {\color{black}-  }  \iint \left[\frac{e^{\frac{2 -\beta_{n-1}}2 2\pi iS_0(\zeta')} \left[\mathfrak C  \mathfrak T  \right]^{(n-1)}    (t_1,t_2,t;\overline\zeta' ) }{p_{\lambda'}({\xi'_1},{\xi'_2})}\right]^{\vee_{{\xi'_1},{\xi'_2}}}(ta-x_{1,n}',-x_{2,n}')\nonumber\\
\times& [u_0 \mathfrak m_0 ](x'_{1,n}-\frac{2t_2}{3}x'_{2,n},x'_{2,n} )  dx_{1,n}'dx_{2,n}'\nonumber\\
=&   e^{\beta_ni\pi   tS_0(\lambda';a)}                  \iint  dx_{1,n}'dx_{2,n}'[u_0 \mathfrak m_0 ](x'_{1,n}-\frac{2t_2}{3}x'_{2,n},x'_{2,n} )e^{\beta_n i\lambda'_I(x_{1,n}'+2\lambda'_Rx_{2,n}')}\nonumber\\
\times&\int d\xi_n'' \ e^{\beta_n2\pi i t \mathfrak G^\sharp(a,t;x'_{1,n},x'_{2,n};\lambda_R';\xi_n'')}   \mathcal F^{(n)} 
 \left[ \mathfrak C \mathfrak T 1 \right]^{0,(n-1)} (t_1,t_2,t;   {\lambda'_R}+2\pi i\xi_n'')\\
=&  e^{\beta_ni\pi   tS_0(\lambda';a)}  \left[ \mathfrak C \mathfrak T 1 \right]^{0,(n)} (t_1,t_2,t;    \lambda' ). 
\end{align*} To make the above formula hold rigorously, be  holomorphic in  $\lambda'_R\lambda'_I$ when ${\lambda'}_I\ne 0$, beyond the argument in Lemma \ref{L:CIO}, the key  step here is to  justify the uniformly boundedness of corresponding \eqref{E:rep-airy} using integration by parts. Precisely, 
\begin{align}
&\lim_{ x_{2,n}'+3 t\lambda'_R \to 0^\pm}\int d\xi_n'' e^{2\pi i   t[4 \pi^2\xi_n''^3 +(a-3{\lambda'}_R^2 -\frac{x_{1,n}'+2\lambda'_Rx_{2,n}'}{t} )\xi_n''] }\mathcal F^{(n)}\left[ \mathfrak C \mathfrak T 1 \right]^{0,(n-1)} (     \lambda'_R+2\pi i\xi_n'' )
\label{E:rep-airy-h}\\
=& \int d\xi_n'' e^{2\pi i   t[4 \pi^2\xi_n''^3 +(a+3{\lambda'_R}^2 -\frac{x_{1,n}' }{t} )\xi_n''] }\theta((a+3{\lambda'_R}^2 -\frac{x_{1,n}' }{t}  )-1) \nonumber\\
\times&\mathcal F^{(n)}\left[ \mathfrak C \mathfrak T 1 \right]^{0,(n-1)} (t_1,t_2,t;    \lambda'_R+2\pi i\xi_n'' ) \nonumber\\
+& \int d\xi_n'' e^{2\pi i   t[4 \pi^2\xi_h''^3 +(a+3{\lambda'_R}^2 -\frac{x_{1,n}' }{t} )\xi_n''] }\theta(1-|a+3{\lambda'_R}^2 -\frac{x_{1,n}' }{t}  |)\nonumber\\
\times&\mathcal F^{(n)}\left[ \mathfrak C \mathfrak T 1 \right]^{0,(n-1)} (t_1,t_2,t;    \lambda'_R+2\pi i\xi_n'' ) \nonumber\\
+& \int d\xi_n'' e^{2\pi i   t[4 \pi^2\xi_n''^3 +(a+3{\lambda'_R}^2 -\frac{x_{1,n}' }{t} )\xi_n''] }\theta(-1-(a+3{\lambda'_R}^2 -\frac{x_{1,n}' }{t}  ))\nonumber\\
\times&\mathcal F^{(n)}\left[ \mathfrak C \mathfrak T 1 \right]^{0,(n-1)} (t_1,t_2,t;    \lambda'_R+2\pi i\xi_n'' ) \equiv I^{(n)}+II^{(n)}+III^{(n)}.\nonumber
\end{align}

Integration by parts, using     Lemma   \ref{L:lambda-i-0}, and \eqref{E:0-x-CIO-neg-stationary-1} inductively, analogous to Lemma \ref{L:CIO},  
\be\label{E:I-n}
|I^{(n)}|,\, |II^{(n)}|,\, |III^{(n)}|\le C(1+|\lambda_R'|) .
\ee

Thanks to  $u_0 \in {\mathfrak M^{1,q}}$, we have 

\begin{gather*}
 \lim_{ x_{2,n}'+3 t\lambda'_R \to 0^\pm} |(1+|\lambda'_R|)[u_0 \mathfrak m_0 ](x'_{1,n}-\frac{2t_2}{3}x'_{2,n},x'_{2,n} )| \\
\le  |(1+|x_{1,n}'|+|x_{2,n}'|)[u_0 \mathfrak m_0 ](x'_{1,n}-\frac{2t_2}{3}x'_{2,n},x'_{2,n} )|.
\end{gather*}
Hence, for   $ u_0 \in\mathfrak M^{1,q}$, the representation formula \eqref{E:ct-n}  holds rigorously and is holomorphic in  $\lambda'_R\lambda'_I$ when ${\lambda'}_I\ne 0$. 

\end{proof}

\begin{definition}\label{D:stationary-g-frak}
Let the phase function $\mathfrak S (a;\lambda_R';\xi_1'')$ be defined by \eqref{E:tilde-s-0}. In view of 
\be\label{E:d-tilde-s-0}
\begin{split}
\partial_{\xi_1''}\mathfrak  S(a;\lambda_R';\xi_1'')=&+12\pi^2 {\xi_1''}^2+(a-3{\lambda'}_R^2  ),\\
\partial_{\xi_1''}^2\mathfrak  S(a;\lambda_R';\xi_1'')=&+24\pi^2 {\xi_1''},
\end{split}\ee  we have the definition for stationary points :
\begin{align*}
&\textit{If $a-3{\lambda'}_R^2>0$, there are no stationary points of $\mathfrak S$.}\\
&\textit{If $a-3{\lambda'}_R^2<0$, there are two stationary points  ${\xi_1''}=\pm b\gtrless 0$, $b^2=\frac{ 3{\lambda'}_R^2-a}{12\pi^2}$, of $\mathfrak S$.}
\end{align*}

\end{definition}

\subsubsection{Asymptotics of the Cauchy integrals}\label{SS:m}\hfill

\begin{proposition}\label{P:cio-0}
If  {\color{black}  the assumption of Theorem \ref{T:u}} holds for $ u_0  \in {\mathfrak M^{1,q}}$  then  we have 
\begin{align} 
   |  \widetilde{\left(\mathcal CT\right)^n1}|\le  {\color{black}  C_\epsilon t^{-\frac 12+\epsilon} }\quad \textit{as $t\to\infty$}.\label{E:CIO-+new}
\end{align} 
  
\end{proposition}

\begin{proof} Applying   Lemmas \ref{L:wickerhauser}, \ref{L:CIO}, and \ref{L:dCIO}, it reduces  to studying the asymptotics of $\mathfrak C\mathfrak T 1$. {We divide the analysis into two regimes: 
\begin{itemize}
\item [$\blacktriangleright$]{\color{black}$|\lambda_R'|\ge r/2$: Decompose
\be\label{E:ge-r}
\begin{split}
 |\mathfrak C\mathfrak T 1|\le &|\mathfrak C\mathfrak T  \theta(|x_2'|-t|\lambda_R'|)|+|\mathfrak C\mathfrak T [1-\theta( ||\xi_1'|-\frac{|\lambda_I'|}{2\pi} |-t^{ -\frac12+\epsilon})]\theta(t|\lambda_R'|-|x_2'|)|\\
 +&|\mathfrak C\mathfrak T \theta( ||\xi_1'|-\frac{|\lambda_I'|}{2\pi} |-t^{-\frac12+\epsilon})\theta(t|\lambda_R'|-|x_2'|)|\equiv I_1+I_2+I_3.
 \end{split}\ee
 
 Apparently, $|I_2|\le Ct^{-\frac12+\epsilon}$. 
 
 Using $ u_0  \in {\mathfrak M^{1,q}}$ and $|\lambda_R'|\ge r/2$,
\begin{align*}
&\iint  dx_1'dx_2' \ \theta(|x_2'|-t|\lambda_R'|) |[u_0    \mathfrak m_0] (x'_1-\frac{2t_2}{3}x'_2,x'_2 )|\\
=&\iint  dx_1'dx_2' \ \theta(|x_2'|-t|\lambda_R'|)\frac{1}{ |x_2'| } | [x_2'u_0    \mathfrak m_0] (x'_1-\frac{2t_2}{3}x'_2,x'_2 )|\le \mathcal O(\frac 1t).
\end{align*} Consequently, $|I_1|\le Ct^{-1}$.

Moreover, notice that 
\begin{multline*}
  \hskip.5in    \theta( |\lambda'_R|-r/2)\theta(t|\lambda'_R|-|x_2'|) \theta( ||\xi_1'|-\frac{|\lambda_I'|}{2\pi} |-t^{-\frac12+\epsilon})\ \\
  \times  |(  x_2'+3  t\lambda'_R)(\xi_1''-\frac{\lambda'_I}{2\pi})  (\xi_1''+\frac{\lambda'_I}{2\pi}  )|\ge Ct^{2\epsilon }.  \hskip.5in  
\end{multline*}This implies $|\mathcal F|_{L^1(d\xi_n'')}\sim C_\epsilon t^{-1}$ and $|I_3|\le C_\epsilon t^{-\frac12+\epsilon}$.}
\item[$\blacktriangleright$]$|\lambda_R'|\le r/2$: 
\begin{itemize}
\item  {\color{black}If $a>+\delta>0$, then $|\partial_{\xi_1''}\mathfrak  S(a;\lambda_R';\xi_1'')|\ge \frac 1C ({\xi_1''}^2+a  )$.} Integration by parts, we obtain 
\be\label{E:le-r-+}
\color{black} |   \mathfrak C\mathfrak T 1 |\le \mathcal O(t^{-1}).
\ee 
\item If $a<-\delta<0$,  the stationary points $\pm b$ are well separated.  As a result, we obtain estimates for the measure  : 
\begin{align}
&|\Sigma_{t^{-1/2}}|\le C t^{-1/2} ,\label{E:obs-+}
\end{align}  where $\Sigma_s(a;\lambda_R';\xi_1'')=\{\xi_1'': |\partial_{\xi_1''} \mathfrak  S(a;\lambda_R';\xi_1'')|\le s\}$. 
 Hence, if $|\lambda'_R|<r$, then using integration by parts,  \eqref{E:obs-+}, and $ u_0 \in\mathfrak M^{1,q}$, we get
\begin{align}
 |   \mathfrak C\mathfrak T 1 |
\le &|\mathfrak C\mathfrak T \theta(t^{-1/2}-|\partial_{\xi_1''} \mathfrak  S(a;\lambda_R';\xi_1'')|) |+|\mathfrak C\mathfrak T\theta(|\partial_{\xi_1''} \mathfrak  S(a;\lambda_R';\xi_1'')|-t^{-1/2})) |\label{E:Add-new-0}\\
\le &C t^{-1/2}+\frac C{t }| \iint dx_1'dx_2' \  [u_0    \mathfrak m_0] (x'_1-\frac{2t_2}{3}x'_2,x'_2 ) e^{i\lambda'_I(x_1'+2\lambda'_Rx_2')} 
\nonumber\\
\times& \int_{\xi_1''\in\Sigma_{t^{-1/2}}^c} d\xi_1''   e^{2\pi i t \mathfrak G }   \partial_{\xi_1''}   \frac{1}{ \partial_{\xi_1''}  \mathfrak S  }       \{  e^{-2\pi i (x_1'+2\lambda'_R x_2')\xi_1''} \sgn(x_2'+3  t\lambda'_R) \nonumber \\
\times& \theta(-(x_2'+3  t\lambda'_R)(\xi_1''-\frac{\lambda'_I}{2\pi})  (\xi_1''+\frac{\lambda'_I}{2\pi}  ))  e^{4\pi^2 (  x_2'+3  t\lambda'_R)(\xi_1''-\frac{\lambda'_I}{2\pi})  (\xi_1''+\frac{\lambda'_I}{2\pi}  )}\} |\le C {t^{-1/2}}.\nonumber
\end{align} 
\end{itemize} 
\end{itemize}}

\end{proof}

Applying Lemma \ref{L:wickerhauser}-\ref{L:dCIO}, and Proposition \ref{P:cio-0}, we obtain the first reduction: 
\begin{proposition}\label{P:first-reduction}   If  {\color{black}  the assumption of Theorem \ref{T:u}} holds for $ u_0  \in {\mathfrak M^{3,q}}$.    For   $\color{black} |a|>\delta>0$, as $t\to\infty$,
\begin{align}
   |u_{2,0}(x)|
 \le & C   \sum_{n=1}^\infty  |\iint d\overline\lambda'\wedge d\lambda'  \widetilde {  s}_c(\lambda'  )e^{\beta_{n+1}2\pi i tS_0 }      (\overline\lambda'-\lambda')  \theta(|\lambda_R'|-t^{\color{black}-\frac12-2\epsilon})  \label{E:first-reduction-1}\\
 \times & \mathfrak C \mathfrak T_{0,(n)}\theta(t|\lambda_R'|-|x_{2,n}'|)
 \left[ \mathfrak C \mathfrak T 1 \right]^{0,(n-1)}| + {\color{black}o_\epsilon  (t^{-1}) } \nonumber. 
\end{align}

\end{proposition}

\begin{proof} 

If $\color{black} |a|>\delta>0$, from \eqref{E:intro-s-c-ana-c-0}, Lemma \ref{L:CIO}, \ref{L:dCIO}, and Proposition \ref{P:cio-0}, for $u_0\in\mathfrak M^{1,q}$,   
 we obtain
\begin{align}
   |u_{2,0}(x)|
 \le  &C   \sum_{n=1}^\infty  |\iint d\overline\lambda'\wedge d\lambda'  \widetilde {  s}_c(\lambda'  )e^{\beta_{n+1}2\pi i tS_0 }      (\overline\lambda'-\lambda')  \theta(|\lambda_R'|-t^{\color{black}-\frac12-2\epsilon})  \label{E:first-reduction-1-2}\\
  \times & \mathfrak C \mathfrak T_{0,(n)}
 \left[ \mathfrak C \mathfrak T 1 \right]^{0,(n-1)}| + {\color{black}C_\epsilon  t^{-1-\epsilon}}     \nonumber. 
\end{align}

Besides,   $u_0 \in {\mathfrak M^{3,q}}$ implies
\begin{align}
&|\theta(|\lambda_R'|-t^{\color{black}-\frac12-2\epsilon}) \theta(| x_{2,n}'|- t|\lambda'_R|)  [u_0 \mathfrak m_0 ](x'_{1,n}-\frac{2t_2}{3}x'_{2,n},x'_{2,n} )|_{L^1(dx'_{1,n}dx'_{2,n})}\label{E:first-reduction-1-3}\\
\le &C\frac{|\theta(|\lambda_R'|-t^{\color{black}-\frac12-2\epsilon}) \theta(| x_{2,n}'|- t|\lambda'_R|) (1+|x'_{2,n}|)^3 [u_0 \mathfrak m_0 ](x'_{1,n}-\frac{2t_2}{3}x'_{2,n},x'_{2,n} )|_{L^1(dx'_{1,n}dx'_{2,n})} }{(1+|t\lambda'_R|)^3 } 
\nonumber \\
\le &Ct^{\color{black} -( \frac 12-2\epsilon)\times 3}|u_0|_{{\mathfrak M^{3,q}}}.\nonumber 
\end{align}Along with \eqref{E:intro-s-c-ana-c-0}, the factor $(\overline\lambda'-\lambda')$,   \eqref{E:first-reduction-1-2}, and  Lemma {\color{black}\ref{L:CIO},} \ref{L:dCIO}, yields   \eqref{E:first-reduction-1}.
\end{proof}

{\color{black}Notice that, by choosing specific parameter $\epsilon$ in the assumption of Proposition \ref{P:first-reduction},
\begin{align}
   |u_{2,0}(x)|
 \le & C   \sum_{n=1}^\infty  |\iint d\overline\lambda'\wedge d\lambda'  \widetilde {  s}_c(\lambda'  )e^{\beta_{n+1}2\pi i tS_0 }      (\overline\lambda'-\lambda')  \theta(|\lambda_R'|-t^{\color{black}-\frac59})  \label{E:first-reduction-1-+}\\
 \times & \mathfrak C \mathfrak T_{0,(n)}\theta(t|\lambda_R'|-|x_{2,n}'|)
 \left[ \mathfrak C \mathfrak T 1 \right]^{0,(n-1)}| + o (t^{-1})  \nonumber. 
\end{align}}

\begin{lemma}\label{L:lambda-r}
If  {\color{black}  the assumption of Theorem \ref{T:u}}  holds for $ u_0  \in {\mathfrak M^{1,q}}$ and $C_0>0$. As $t\to\infty$,  
\begin{align}
|\partial_{\lambda'_I}\left[\mathfrak C \mathfrak T1\right]^{0, (n)}| \le&  C(1+|\lambda_R'|) ,      \label{E:0-x-CIO-neg-stationary-1-new} \\
   \theta(|\lambda_R'|-1/C_0) |
 \partial_{\lambda'_R} [\mathfrak C \mathfrak T_{0,(n)}\theta(t|\lambda_R'|-|x_{2,n}'|)[\mathfrak C \mathfrak T1]^{0,(n-1)}] 
\le  & C (1+ |\lambda_I'| ) . \label{E:lambda-r}
\end{align} 
\end{lemma}

\begin{proof}Proof of \eqref{E:0-x-CIO-neg-stationary-1-new} follows from the same argument used in the proof of Lemma \ref{L:lambda-i-0}. To prove \eqref{E:lambda-r}, from \eqref{E:ct-n},   
\begin{align}
&{\color{black}|\partial_{\lambda'_R} [\mathfrak C \mathfrak T_{0,(n)}\theta(t|\lambda_R'|-|x_{2,n}'|)[\mathfrak C \mathfrak T1]^{0,(n-1)}]|} \nonumber\\
\le&  C   |\iint dx_{1,n}'dx_{2,n}' \     x_{2,n}'\lambda_{I}'        [u_0  \mathfrak m_0  ](x'_{1,n}-\frac{2t_2}{3}x'_{2,n},x'_{2,n} )   e^{\beta_ni \lambda_I' (x_{1,n}'+2\lambda'_Rx'_{2,n} )}  \nonumber\\
\times& \theta(t|\lambda_R'|-|x_{2,n}'|)\int  d\xi_n''e^{\beta_n2\pi i   t\mathfrak G^\sharp }  \mathcal F ^{(n)} \left[\mathfrak C\mathfrak T 1\right] ^{0,(n-1)} (t_1,t_2,t;   {\lambda'_R}+2\pi i\xi_n'')  |\nonumber\\
+&{\color{black} C  |\left[\mathfrak C\mathfrak T 1\right] ^{0,(n-1)}|_{L^\infty}\int dx_{1,n}'   \     | t  [u_0  \mathfrak m_0  ](x'_{1,n}{\color{black}\mp \frac {2t_2}3}t\lambda'_R,\pm t\lambda_R' )  | } \nonumber\\
+&  C |\iint dx_{1,n}'dx_{2,n}' \            [u_0  \mathfrak m_0  ](x'_{1,n}-\frac{2t_2}{3}x'_{2,n},x'_{2,n} )   e^{\beta_ni \lambda_I' (x_{1,n}'+2\lambda'_Rx'_{2,n} )}  \nonumber\\
\times& \theta(t|\lambda_R'|-|x_{2,n}'|)\int  d\xi_n''e^{\beta_n2\pi i   t\mathfrak G^\sharp }  \mathcal F ^{(n)} \left[\mathfrak C\mathfrak T 1\right] ^{0,(n-1)}  \cdot 4\pi i  (x_{2,n}'+3t\lambda'_R )  \xi_n''|\nonumber\\
+&  C |\iint dx_{1,n}'dx_{2,n}' \            [u_0  \mathfrak m_0  ](x'_{1,n}-\frac{2t_2}{3}x'_{2,n},x'_{2,n} )   e^{\beta_ni \lambda_I' (x_{1,n}'+2\lambda'_Rx'_{2,n} )}  \nonumber\\
\times& \theta(t|\lambda_R'|-|x_{2,n}'|)\int  d\xi_n''e^{\beta_n2\pi i   t\mathfrak G^\sharp }  \mathcal F ^{(n)} \left[\mathfrak C\mathfrak T 1\right] ^{0,(n-1)}   \cdot t(\xi_n''-\frac{\lambda'_I}{2\pi}) (\xi_n''+\frac{\lambda'_I}{2\pi})  |\nonumber\\
     +&  C |\iint dx_{1,n}'dx_{2,n}' \            [u_0  \mathfrak m_0  ](x'_{1,n}-\frac{2t_2}{3}x'_{2,n},x'_{2,n} )   e^{\beta_ni \lambda_I' (x_{1,n}'+2\lambda'_Rx'_{2,n} )}  \nonumber\\
\times& \theta(t|\lambda_R'|-|x_{2,n}'|)\int  d\xi_n''e^{\beta_n2\pi i   t\mathfrak G^\sharp }  \mathcal F ^{(n)}  \partial_{\lambda'_R}\left[\mathfrak C\mathfrak T 1\right] ^{0,(n-1)} | \nonumber\\  
\equiv& {\color{black} I^{(n)}_{ 1} +I^{(n)}_{ 2}+  I^{(n)}_{ 3}+   I^{(n)}_{ 4}+   I^{(n)}_{ 5} } . \nonumber
\end{align}

Applying  $u_0  \in {\mathfrak M^{1,q}}$,  we obtain 
\begin{gather}
   I^{(n)}_{ 1}  \le  C  |\lambda_I'|  .\label{E:I-+-r}  
\end{gather}

Besides, for  $\color{black}u_0\in\mathfrak M^{1,q}$,
\begin{align}
I^{(n)}_3\le &  C  \iint dx_1'dx_2'  | [u_0  \mathfrak m_0  ](x'_{1,n}-\frac{2t_2}{3}x'_{2,n},x'_{2,n} )|\,| \int  d\xi''_{\color{black}n}e^{2\pi i   t\mathfrak G^\sharp }  \label{E:III-r}\\
\times& \theta(-(  x'_{\color{black}2,n}+3  t\lambda'_R)(\xi''_{\color{black}n} -\frac{\lambda'_I}{2\pi})  (\xi''_{\color{black}n}+\frac{\lambda'_I}{2\pi}  ))
     \partial_{\xi''_{\color{black}n}}  e^{4\pi^2 (  x'_{\color{black}2,n}+3  t\lambda'_R)(\xi''_{\color{black}n}-\frac{\lambda'_I}{2\pi})  (\xi''_{\color{black}n}+\frac{\lambda'_I}{2\pi}  )} |\le C,\nonumber     
\end{align}and, {\color{black} using the cut off functions $\theta(|\lambda_R'|-1/C_0)$, $\theta(t|\lambda_R'|-|x_{2,n}'|)$ and  writing $t$ as $\frac{t\lambda_R'}{\lambda_R'}$ in $I_2^{(n)}$,   we obtain   the exponent $4\pi^2(x_{2,n}'+3  t\lambda'_R)(\xi_n''-\frac{\lambda_I'}{2\pi}) (\xi_n''+\frac{\lambda_I'}{2\pi})$ is proportional to   $t(\xi''_n-\frac{\lambda_I'}{2\pi}) (\xi_n''+\frac{\lambda_I'}{2\pi})$ in $I_4^{(n)}$ and
\begin{align}
&   \theta(|\lambda_R'|-1/C_0)I^{(n)}_{ 2},\quad \theta(|\lambda_R'|-1/C_0) I ^{(n)}_{ 4}    \le C .\label{E:I-+-r2}  
\end{align}}

Applying   Lemma \ref{L:wickerhauser},  $u_0\in\mathfrak M^{1,q}$, $\color{black}\xi_{n}'=\xi_{n}''-\frac{\lambda_I'}{2\pi}$,
  and an induction, we obtain  
\begin{align}
&   I^{(n)}_{ 5} \le  C  (1+|\lambda_I'|)  . \label{E:I-II-+-n-r} 
\end{align}

\end{proof}

The above lemma shows that taking  the derivatives of the Cauchy integrals,   no matter how small a neighborhood is chosen around these points, the $(1 + |\lambda'|)\mathcal{O}(1)$ bounds on the $\lambda'$-derivatives of the Cauchy integrals cannot be improved. {\color{black}Moreover, higher derivatives of the Cauchy integrals correspond to higher orders in $t$.} This presents a fundamental obstruction to obtaining $o(t^{-1})$ estimates for $u_{2,0}$ and $u_{2,1}$ through our approach.

\subsection{Long time asymptotics of $u_{2,0}(x) $ when    $\color{black}a >+\delta>0$ }\label{SS:u+2}

Throughout this subsection, {\color{black}  the assumption of Theorem \ref{T:u}}  holds for $\color{black}a >+\delta>0$, define     $
\psi_{r,w_0}$, $\color{black}\chi$  as in  \eqref{E:localize-fct}, {\color{black}\eqref{E:local}},  and set  $  b=  ( -r^2+{\lambda'}_R^2)^{1/2}/{2\pi}$.

{\color{black}Using Lemma \ref{L:lambda-r}, we adapt the argument from Proposition \ref{P:u-12} to obtain asymptotic estimates away from the stationary points.} 
\begin{lemma}\label{L:+3ndReduction-outside}
 Suppose {\color{black}  the assumption of Theorem \ref{T:u}}  holds. As $t\to\infty$,   
\begin{align}
&    \sum_{n=1}^\infty |\iint d\overline\lambda'\wedge d\lambda'  \ e^{\beta_{n+1}2\pi i tS_0 }     \widetilde {  s}_c(\lambda'  ) (\overline\lambda'-\lambda'){\color{black}[1-\chi(\lambda')]} \label{E:first-reduction-1-+new-1-outside}\\
&\times   \theta(|\lambda_R'|-t^{- 5/9})\mathfrak C\mathfrak T_{0,(n)}\theta(t|\lambda_R'|-|x_{2,n}'|)\left[ \mathfrak C \mathfrak T 1 \right]^{0,(n-1)}| \le o(t^{-1}). \nonumber
\end{align}   
\end{lemma}
\begin{proof} The proof of the lemma demonstrates that the term $(\overline{\lambda'} - \lambda')$ is essential for eliminating the Kiselev conditions, such as the integrability of $(1+|\lambda'|)  s_c$  or boundedness of $\partial^k_{\lambda'_I}s_c$ for $k\le 2$.
\begin{itemize}
\item [$\blacktriangleright$]{\it Case of $a-3{\lambda'_R}^2>0$}: In this case,   $|{\lambda_R'}|\le r $, 
and  the analysis can be reduced to cases:  
\begin{itemize}
\item [(1)]  $\psi_{r,r}(\lambda'_R)\ne 0$ and $ \psi_{  r,0}(\lambda'_I)=0$; 
\item [(2)] $ \psi_{r,r}(\lambda'_R)=0$. 
\end{itemize}
Notice that  $\partial_{\lambda'_I}\mathfrak  S(a;\lambda_R';\lambda'_I)= +12\pi^2 {\lambda'_I}^2+(a-3{\lambda'}_R^2  )\ge r/C$ for both cases. Therefore, we obtain \eqref{E:first-reduction-1-+new-1-outside} by applying integration by parts  with respect to $\lambda'_I$ to 
\begin{multline*}
\hskip.5in{\color{black}\iint d\overline\lambda'\wedge d\lambda'  \ e^{\beta_{n+1}2\pi i tS_0 }     \widetilde {  s}_c(\lambda'  ) (\overline\lambda'-\lambda')[1-\chi(\lambda')]\theta( a-3{\lambda'_R}^2 )}  \\
{ \times   \theta(|\lambda_R'|-t^{- 5/9})\mathfrak C\mathfrak T_{0,(n)}\theta(t|\lambda_R'|-|x_{2,n}'|)\left[ \mathfrak C \mathfrak T 1 \right]^{0,(n-1)},}\hskip.5in\end{multline*} 
\begin{itemize}
\item using \eqref{E:intro-s-c-ana-c-0}, and $|{\lambda_R'}|\le r $   to obtain
\be\label{E:integration-by-parts-o-42}
|\partial_{\lambda'_I}  \left(\widetilde{  s}_c(\lambda'  )(\overline\lambda'-\lambda') \frac{1}{|\partial_{\lambda'_I} \mathfrak  S| }\right)|_{L^1(d\lambda'_R d\lambda'_I)}<C ;
\ee
\item adopting \eqref{E:intro-s-c-ana-c-0}, \eqref{E:0-x-CIO-neg-stationary-1}, Lemma \ref{L:sd-der-new}, and $|{\lambda_R'}|\le r $ to get
\be\label{E:integration-by-parts-o-42-new}
|\widetilde{  s}_c(\lambda'  )(\overline\lambda'-\lambda') \frac{1}{|\partial_{\lambda'_I} \mathfrak  S| }\partial_{\lambda'_I}  \left[ \mathfrak C\mathfrak T_{0,(n)}\theta(t|\lambda_R'|-|x_{2,n}'|)\left[ \mathfrak C \mathfrak T 1 \right]^{0,(n-1)}\right] |_{L^1(d\lambda'_R d\lambda'_I)}<C,
\ee
\end{itemize}    and adapting the argument from the proof of Proposition \ref{P:u-12}.

\item [$\blacktriangleright$]{\it Proof of $a-3{\lambda'_R}^2<0$}: In this case, $|{\lambda_R'}|\ge r $. The analysis can be reduced to cases:  
\begin{itemize}
\item [(1')]  $\psi_{r,r}(\lambda'_R)\ne 0$ and $ \psi_{  r,0}(\lambda'_I)=0$; 
\item [(2')] $ \psi_{r,r}(\lambda'_R)=0$. 
\end{itemize}

Proof of Case of (1') can be proceeded as that of Case (1). 
For Case of (2'), \eqref{E:first-reduction-1-+new-1-outside} is justified by  
adapting argument of Proposition \ref{P:u-12}, that is, integration by parts  with respect to $\lambda'$ to 
\begin{multline*}
\hskip.5in{\color{black}\iint d\overline\lambda'\wedge d\lambda'  \  e^{\beta_{n+1}2\pi i tS_0 } \widetilde {  s}_c(\lambda'  ) (\overline\lambda'-\lambda')[1-\chi(\lambda')]\theta( -(a-3{\lambda'_R}^2))} \\
{\color{black}\times   \theta(|\lambda_R'|-t^{- 5/9})\mathfrak C\mathfrak T_{0,(n)}\theta(t|\lambda_R'|-|x_{2,n}'|)\left[ \mathfrak C \mathfrak T 1 \right]^{0,(n-1)} }  ,\hskip.5in
\end{multline*}   
\begin{itemize}
\item Using  $ \psi_{r,r}(\lambda'_R)=0$ and  $|{\lambda_R'}|\ge r $, we derive 
\be\label{E:integration-by-parts-o-42->0}
 \nabla\cdot   {\color{black}  \theta( -(a-3{\lambda'_R}^2)) }  =0,\quad \nabla\cdot     \theta(|\lambda_R'|-t^{- 5/9})  =0.
\ee
\item Applying \eqref{E:intro-s-c-ana-c-0},  Lemma \ref{L:sd-der-new},  and taking advantage of the factor $(\overline\lambda'-\lambda')$, we have
\be\label{E:integration-by-parts-o-42->}
|\nabla\cdot \left(\widetilde{  s}_c(\lambda'  )(\overline\lambda'-\lambda')(1-\chi)\frac{\nabla S_0}{|\nabla S_0|^2}\right)|_{L^1(d\lambda'_R d\lambda'_I)}<C .
\ee
\item Applying $u_0\in\mathfrak M^{3,q}$, \eqref{E:intro-s-c-ana-c-0},   the factor $(\overline\lambda'-\lambda')$, and Lemma \ref{L:lambda-r} for $|{\lambda_R'}|\ge r $,
\be\label{E:integration-by-parts-o-42->1}
|\widetilde{  s}_c(\lambda'  )(\overline\lambda'-\lambda')(1-\chi)\frac{\nabla S_0}{|\nabla S_0|^2}\nabla\cdot  {\color{black}\mathfrak C\mathfrak T_{0,(n)}\theta(t|\lambda_R'|-|x_{2,n}'|)\left[ \mathfrak C \mathfrak T 1 \right]^{0,(n-1)}}|_{L^1(d\lambda'_R d\lambda'_I)}<C .
\ee
\end{itemize}   
\end{itemize}

\end{proof}

{\color{black}With the aid of Lemma \ref{L:lambda-r}, we now adapt the argument from Proposition \ref{P:u-11} to derive asymptotic estimates near the stationary points. However, due to the lack of effective control on higher derivatives of the Cauchy integrals, we cannot use Airy function properties to obtain an $o(t^{-1})$ estimate as in Proposition \ref{P:u-11}, and instead only derive an $\mathcal{O}(t^{-1})$ bound.}  
\begin{lemma}\label{L:+2ndReduction}
 Suppose {\color{black}  the assumption of Theorem \ref{T:u}}  holds. As $t\to\infty$,   
\begin{multline} \label{E:2ndreduction-1-+new} 
\hskip.5in\sum_{n=1}^\infty  \iint d\overline\lambda'\wedge d\lambda'   \ e^{\beta_{n+1}2\pi i tS_0 } \widetilde {  s}_c(\lambda'  ) (\overline\lambda'-\lambda'){\color{black} \chi(\lambda')} \\
\times   \theta(|\lambda_R'|-t^{- 5/9}) \mathfrak C\mathfrak T_{0,(n)}\theta(t|\lambda_R'|-|x_{2,n}'|)\left[ \mathfrak C \mathfrak T 1 \right]^{0,(n-1)} \sim {\color{black} \mathcal O ( t^{-1}) }.\hskip.5in
\end{multline}  
\end{lemma}
\begin{proof} Applying integration by parts, using the factors $\overline\lambda'-\lambda'$, $\chi$, and Lemmas \ref{L:sd-der-new},  \ref{L:lambda-r},  
\begin{align*}
   &\textit{LHS of \eqref{E:2ndreduction-1-+new}} = {\color{black}   \mathcal O( t^{-1})}\sum_{n=1}^\infty\int d \lambda'_I  \int d\lambda'_R\  e^{\beta_{n+1}2\pi i tS_0 } \\ \times&\partial_{\lambda_R'}\left(\frac{1}{\lambda'_R}  \widetilde {  s}_c(\lambda'  )   \chi(\lambda') \mathfrak C\mathfrak T_{0,(n)}\theta(t|\lambda_R'|-|x_{2,n}'|)\left[ \mathfrak C \mathfrak T 1 \right]^{0,(n-1)}\right)  \sim {\color{black}   \mathcal O( t^{-1})}.\nonumber   
\end{align*}

\end{proof}

\begin{theorem}\label{T:+u-2}  Assume {\color{black}  the assumption of Theorem \ref{T:u}} holds. As $t\to +\infty$,
\be \label{E:+u-2}
   u_{2,0 }\sim  {\color{black}  \mathcal O (t^{-1 }) }.
\ee
\end{theorem}
\begin{proof}
Follows from   \eqref{E:first-reduction-1-+}, Lemmas \ref{L:+3ndReduction-outside}, and \ref{L:+2ndReduction}.
\end{proof}

\subsection{Long time asymptotics of $u_{2,0}(x) $ when    $\color{black}a <-\delta<0$}\label{SS:u-2}

Throughout this subsection, we assume the hypotheses of  {\color{black}    Theorem \ref{T:u}}  and define  $
\psi_{r,w_0}$  as in  \eqref{E:localize-fct}. We also set  $  b=  (  r^2+{\lambda'}_R^2)^{1/2}/{2\pi}$  and adopt the terminology introduced in Lemmas  \ref{L:CIO}  and \ref{L:dCIO}.   
 
An analogue of Lemma \ref{L:+3ndReduction-outside} is stated as follows:
{\color{black}\begin{lemma}\label{L:3-reduction-u20}   Suppose    the assumption of Theorem \ref{T:u}  is valid.  As $t\to\infty$,  
\begin{align}
&    \sum_{n=1}^\infty  \iint d\overline\lambda'\wedge d\lambda'  e^{\beta_{n+1}2\pi i tS_0 }\widetilde {  s}_c(\lambda'  )      (\overline\lambda'-\lambda') [1-\psi_{r,0}(\lambda'_R)\psi_{r,r}(\lambda'_I)]  \label{E:-reduction-3-outside-new}\\
 \times& \theta(|\lambda_R'|-t^{\color{black}-\frac12-2\epsilon}) \mathfrak C \mathfrak T_{0,(n)}\theta(t|\lambda_R'|-|x_{2,n}'|)
\left[ \mathfrak C \mathfrak T 1 \right]^{0,(n-1)}\sim o(t^{-1}) . \nonumber
\end{align}   
\end{lemma} 
\begin{proof}   
Without loss of generality, the analysis of \eqref{E:-reduction-3-outside-new} can be reduced to cases:  
\begin{itemize}
\item [(1'')]  $\psi_{r,0}(\lambda'_R)\ne 0$ and $ \psi_{  r,r}(\lambda'_I)=0$; 
\item [(2'')] $ \psi_{r,0}(\lambda'_R)=0$. 
\end{itemize}

For Case (1''),  $|\partial_{\lambda'_I}[{ \lambda'}^3_I+(a-3{\lambda'}_R^2)\lambda'_I]|>r/C'_0$ for some positive constant $C_0'$. Hence the proof of \eqref{E:-reduction-3-outside-new} can be established by applying integration by parts with respect to $\lambda_I'$ and $|\lambda_R'|<r$.

For Case (2''), the proof of \eqref{E:-reduction-3-outside-new} can be obtained by applying integration by parts with respect to $\lambda '$, Lemma \ref{L:lambda-r}, the factor  $(\overline\lambda'-\lambda')$,   and an adaptation of the argument in the proof of Proposition \ref{P:u-12}. 
\end{proof} }
{\color{black} 

For $a<-\delta<0$, the absence of a positive lower bound for $|\lambda_R'|$ prevents us from applying the argument used in Lemma \ref{L:+2ndReduction} near the stationary points. Moreover, without effective higher derivative estimates, we mainly rely on integration by parts, and the proof becomes more delicate.  The obstruction arises when derivatives act on the Cauchy integrals: regardless of how small the integration region is, their derivatives admit at best an $\mathcal{O}(1)$ bound (see \eqref{E:-AddReduction-8-+}). Consequently, the strongest decay we obtain is $o(t^{-11/12+\epsilon})$.

\begin{theorem}\label{T:-u-2}     Assume   the assumption of Theorem \ref{T:u}  holds for $u_0\in\mathfrak M^{3,q}$. As $t\to +\infty$,
\[
   u_{2,0 }\sim  o_\epsilon( {t}^{-\frac{11}{12}+\epsilon}  ) .
\] 
\end{theorem} \begin{proof}   We divide the asymptotic analysis into two regimes:
\begin{align}
 &  \sum_{n=1}^\infty |\iint d\overline\lambda'\wedge d\lambda'  e^{\beta_{n+1}2\pi i tS_0 }  \widetilde {  s}_c(\lambda'  )     (\overline\lambda'-\lambda') {\color{black} \psi_{r,0}(\lambda'_R)\psi_{r,r}(\lambda'_I)}\theta(|\lambda_R'|-t^{-\frac 12-2\epsilon})  \label{E:-2ndreduction-1-+greece}\\
&\quad\times\mathfrak C \mathfrak T_{0,(n)} \theta(t|\lambda_R'|-|x_{2,n}'|)
{\color{black} [1-\theta(||\xi_n''|- \frac{|\lambda'_I|}{2\pi}|-t^{-1+2z})]} \left[ \mathfrak C \mathfrak T 1 \right]^{0,(n-1)}| ; \nonumber  
\\
&\sum_{n=1}^\infty  |\iint d\overline\lambda'\wedge d\lambda' e^{\beta_{n+1}2\pi i tS_0 }      \widetilde {  s}_c(\lambda'  ) (\overline\lambda'-\lambda') {\color{black} \psi_{r,0}(\lambda'_R)\psi_{r,r}(\lambda'_I)} \theta(|\lambda_R'|-t^{-\frac 12-2\epsilon})     \label{E:-2ndReduction-(0)-more-greece}\\
\times&\mathfrak C\mathfrak T_{0,(n)}\theta(t|\lambda_R'|-|x_{2,n}'|)\theta(||\xi_n''|- \frac{|\lambda'_I|}{2\pi}|-t^{-1+2z}) \left[\mathfrak C\mathfrak T1\right]^{0,(n-1)}  |  ,  \nonumber
\end{align} where $0<z<1$. The parameter $z$ will be chosen later to optimize the asymptotic estimates.

\begin{itemize}
\item [$\blacktriangleright$] 
Decompose  \eqref{E:-2ndreduction-1-+greece} into:
\begin{align}
\le&  \sum_{n=1}^\infty   |\iint d\overline\lambda'\wedge d\lambda'  \ e^{\beta_{n+1}2\pi i tS_0 }     \widetilde {  s}_c(\lambda'  ) (\overline\lambda'-\lambda'){\color{black}\psi_{r,0}(\lambda'_R)\psi_{r,r}(\lambda'_I)}\theta(|\lambda_R'|-t^{-\frac 12-2\epsilon}) \label{E:-2ndreduction-2-+new}\\
 \times &  \psi_{t^{-z},b}(\lambda_I')  \mathfrak C \mathfrak T_{0,(n)}\theta(t|\lambda_R'|-|x_{2,n}'|) {\color{black} [1-\theta(||\xi_n''|- \frac{|\lambda'_I|}{2\pi}|-t^{-1+2z})]}\left[ \mathfrak C \mathfrak T 1 \right]^{0,(n-1)}|\nonumber   \\
 + & \sum_{n=1}^\infty  |\iint d\overline\lambda'\wedge d\lambda'  \ e^{\beta_{n+1}2\pi i tS_0 }     \widetilde {  s}_c(\lambda'  ) (\overline\lambda'-\lambda'){\color{black}\psi_{r,0}(\lambda'_R)\psi_{r,r}(\lambda'_I)}\theta(|\lambda_R'|-t^{-\frac 12-2\epsilon}) \nonumber   \\
\times&  [1-\psi_{t^{-z},b}(\lambda_I')  ]\mathfrak C \mathfrak T_{0,(n)}\theta(t|\lambda_R'|-|x_{2,n}'|){\color{black} [1-\theta(||\xi_n''|- \frac{|\lambda'_I|}{2\pi}|-t^{-1+2z})]}| 
\equiv  I +I' .\nonumber    
\end{align} 

From the size of the integration region,
\be\label{E:-AddReduction-9}
I  \le  C   t^{-(1-z)}.
\ee 

Integration by parts with respect to $\lambda_I'$, using \eqref{E:intro-s-c-ana-c-0}, the factor ${\color{black}\psi_{r,0}(\lambda'_R)}$, 
   $ b=( r^2+{\lambda'}_R^2)^{1/2}/{2\pi}$, and  \eqref{E:0-x-CIO-neg-stationary-1-new}, we  obtain 
\be\label{E:-AddReduction-8-+}
 I' \le  C t^{-(1-z)} .
\ee 

\item [$\blacktriangleright$] As for \eqref{E:-2ndReduction-(0)-more-greece}, let $0<\epsilon_z\ll z$, consider the decomposition  
\begin{align}
&   {\color{black} \psi_{r,r}(\lambda'_I)}\theta(|{\lambda'}_R|-t^{-\frac 12-2\epsilon} )\theta(t|\lambda_R'|-|x_{2,n}'|)\theta(||\xi_n''|- \frac{|\lambda'_I|}{2\pi}|-t^{-1+2z})  \label{E:-3ndReduction-2}\\ 
=&   {\color{black} \psi_{r,r}(\lambda'_I)} \theta(|\lambda_R'|-t^{-2z+\epsilon_x})\theta(t|\lambda_R'|-|x_{2,n}'|)\theta(||\xi_n''|- \frac{|\lambda'_I|}{2\pi}|-t^{-1+2z})  \nonumber\\
+&   {\color{black} \psi_{r,r}(\lambda'_I)}\theta(|\lambda_R'|-t^{-\frac 12-2\epsilon}) \theta(t^{-2z+\epsilon_x}-|\lambda_R'|)\theta(t|\lambda_R'|-|x_{2,n}'|) \theta(||\xi_n''|- \frac{|\lambda'_I|}{2\pi}|-t^{-1+2z} ).\nonumber
\end{align}

From the factor $  \psi_{r,r}(\lambda'_I) $, we can show that the $L^1(d\xi''_n)$-norms of $\mathcal F^{(n)}$ over the corresponding domains for the first term on the right-hand side of \eqref{E:-3ndReduction-2} is less than $  C_z(t^{-2})$. 
 Along with \eqref{E:intro-s-c-ana-c-0} and the factor $(\overline\lambda'-\lambda')$,  the analysis reduces to studying the contribution over the domain corresponding to the second term, which is bounded by:
\begin{align}
&  |\iint d\overline\lambda'\wedge d\lambda'\ e^{\beta_{n+1}2\pi i tS_0 }   \widetilde {  s}_c(\lambda'  )     (\overline\lambda'-\lambda') {\color{black}\psi_{r,0}(\lambda'_R)\psi_{r,r}(\lambda'_I)}\theta(|\lambda_R'|-t^{-\frac 12-2\epsilon})  \label{E:-2ndReduction-4}\\
\times&\theta(t^{-2z+\epsilon_z}-|\lambda_R'|)  \psi_{t^{-y+\epsilon_y},b}(\lambda_I')\mathfrak C\mathfrak T_{0,(n)}\theta(t|\lambda_R'|-|x_{2,n }'|)  \nonumber\\
\times&\theta(||\xi_n''|- \frac{|\lambda'_I|}{2\pi}|-t^{-1+2z})\left[\mathfrak C\mathfrak T1\right]^{0,(n-1)}| \nonumber\\
+&  |\iint d\overline\lambda'\wedge d\lambda' \ e^{\beta_{n+1}2\pi i tS_0 } \widetilde {  s}_c(\lambda'  )(\overline\lambda'-\lambda') {\color{black}\psi_{r,0}(\lambda'_R)\psi_{r,r}(\lambda'_I)}\theta(|\lambda_R'|-t^{-\frac 12-2\epsilon})  \nonumber\\
\times& \theta(t^{-2z+\epsilon_z}-|\lambda_R'|)(1-\psi_{t^{-y+\epsilon_y},b}(\lambda_I')) \mathfrak C\mathfrak T_{0,(n)}\psi_{t^{-y-\epsilon_y},b}(\xi_n'')\theta(t|\lambda_R'|-|x_{2 ,n}'|)  \nonumber\\
\times&\theta(||\xi_n''|- \frac{|\lambda'_I|}{2\pi}|-t^{-1+2z})  \left[\mathfrak C\mathfrak T1\right]^{0,(n-1)} |\nonumber\\
+&  |\iint d\overline\lambda'\wedge d\lambda'\ e^{\beta_{n+1}2\pi i tS_0 }\widetilde {  s}_c(\lambda'  )(\overline\lambda'-\lambda') {\color{black}\psi_{r,0}(\lambda'_R)\psi_{r,r}(\lambda'_I)}\theta(|\lambda_R'|-t^{-\frac 12-2\epsilon}) \nonumber\\
\times& \theta(t^{-2z+\epsilon_z}-|\lambda_R'|) (1-\psi_{t^{-y+\epsilon_y},b}(\lambda_I'))\mathfrak C\mathfrak T_{0,(n)}(1-\psi_{t^{-y-\epsilon_y},b}(\xi_n''))\theta(t|\lambda_R'|-|x_{2,n}'|) \nonumber\\
\times& \theta(||\xi_n''|- \frac{|\lambda'_I|}{2\pi}|-t^{-1+2z})\left[\mathfrak C\mathfrak T1\right]^{0,(n-1)}|  
\equiv  I_1+I_2+I_3.\nonumber
\end{align}Here $ y$ will be determined to optimize the asymptotic estimates and $0<\epsilon_y\ll y$.

Applying Proposition \ref{P:cio-0}, and using  $|\psi_{t^{-y+\epsilon_y},b}(\lambda_I')|_{L^1(d\lambda_I')}\le Ct^{-y+\epsilon_y}$,   $|\theta(t^{-2z+\epsilon_z}-|\lambda_R'|)|_{L^1(d\lambda_R')}\le Ct^{-2z+\epsilon_z}$, and \eqref{E:intro-s-c-ana-c-0}, we obtain
\be\label{E:-2ndReduction-5}
|I_1|\le C_\epsilon t^{-2z+\epsilon_z-y+\epsilon_y-{\color{black}\frac 12+\epsilon}} .
\ee

Moreover,  using the two stationary points $ \pm b= \pm (  r^2+{\lambda'}_R^2)^{1/2}/{2\pi}$ of $\mathfrak S$, we have
\be\label{E:b}
(1-\psi_{t^{-y+\epsilon_y},b}(\lambda_I'))\psi_{t^{-y-\epsilon_y},b}(\xi_n'')|(\xi_n'' -  \frac{\lambda'_I}{2\pi})(\xi_n''+\frac{\lambda'_I}{2\pi})|\ge \frac 1Ct^{-y},\ee
 which implies
\begin{align}
 &  (1-\psi_{t^{-y+\epsilon_y},b}(\lambda_I')) \psi_{t^{-y-\epsilon_y},b}(\xi_n'')  \theta(t|\lambda_R'|-|x_{2,n }'|)\theta(|\lambda_R'|-t^{-\frac 12-2\epsilon})\label{E:-2ndReduction-6} \\
 \times&   |(  x_{2,n }'+3  t\lambda'_R)(\xi_n''-\frac{\lambda'_I}{2\pi})  (\xi_n''+\frac{\lambda'_I}{2\pi}  )| \ge t^{ 1-\frac 12-2\epsilon-y}/C .\nonumber
 \end{align}
Provided
\be\label{E:y-asump}
 \frac 12-2\epsilon-y>0,
\ee together with \eqref{E:intro-s-c-ana-c-0}, the factor $(\overline\lambda'-\lambda')$, and \eqref{E:real-ima-new}, yields 
\be\label{E:-2ndReduction-7}
|I_2|\le  o_{\epsilon,y}(t^{-1}).
\ee

Finally, for $I_3$,  we apply integration by parts with respect to $\xi_n''$,    \eqref{E:0-x-CIO-neg-stationary-1},  $  b=  (  r^2+{\lambda'}_R^2)^{1/2}/{2\pi}$, and  $|\theta(t^{-2z+\epsilon_z}-|\lambda_R'|)|_{L^1(d\lambda_R')}\le C t^{-2z+\epsilon_z}$ to conclude  
\be\label{E:-2ndReduction-8}
|I_3|\le Ct^{-1+y+\epsilon_y-2z+\epsilon_z}  .
\ee

\end{itemize}
In view of \eqref{E:-AddReduction-9}, \eqref{E:-AddReduction-8-+}, \eqref{E:-2ndReduction-5}, and \eqref{E:-2ndReduction-8}, to optimize the estimates, we obtain
\be\label{E:maximise}
y=\frac 14+\frac\epsilon 2	,\quad z=\frac 1{12}+\frac{\epsilon}{6}+\frac{\epsilon_z}3+\frac{\epsilon_y}3,\quad |I_j|\sim  o_\epsilon( {t}^{-\frac{11}{12}+ \epsilon  }  ) \ \textit{for }j=1,2,3.
\ee

Therefore, together with Proposition \ref{P:first-reduction} and Lemma  \ref{L:3-reduction-u20}, yields:  As $t\to +\infty$,
\[
   u_{2,0 }\sim  o_\epsilon( {t}^{-\frac{11}{12}+ \epsilon  }  ) .
\]

\end{proof}}



\section{Long time asymptotics of $u_{2,1}(x)$}\label{S:m-1}

We adapt the approach from Section \ref{S:m} to derive the asymptotic behavior of $u_{2,1}$.  To facilitate integration by parts without imposing additional conditions on $\partial^j_{\lambda'_I}\widetilde s_c$ and $\lambda'\widetilde s_c$ near $\lambda'_I=0$ (cf \cite{Ki06}), particular care is needed, and the argument becomes more involved.

Throughout this section,   $a$, $r$, $t_i$, $t$ are as defined in Definition \ref{D:stationary}.

\subsection{The Cauchy integrals}\label{SS:CI-1}

\subsubsection{Representation formulas of the Cauchy integrals}\label{SS:m-pre-u12}

In this subsection, we present two distinct representations of $\partial_{x_1}\widetilde{(\mathcal C T )^n1}$.  Each representation is useful in a different context.
\begin{lemma}\label{L:dCIO-u21} If {\color{black}  the assumption of Theorem \ref{T:u}} holds for $u_0 \in {\mathfrak M^{1,q}}$ then 
\begin{align}
& \partial_{x_1}\widetilde{\mathcal CT1 } (t_1,t_2,t;    \lambda' )\label{E:d-CT1-u12-T1} 
=    e^{ i\pi tS_0(a;\lambda') } \iint dx_1'dx_2' \     \left(\partial _{x'_1}[u_0  \mathfrak m_0  ] \right)(x'_1-\frac{2t_2}{3}x'_2,x'_2 )  e^{i\lambda'_I(x_1'+2\lambda'_Rx_2')}
 \\
&\quad\times  \int d\xi_1''   e^{2\pi i t \mathfrak G^\sharp} \mathcal F(t;\lambda';x_1',x_2';\xi_1'')   
\equiv  e^{ i\pi tS_0(a;\lambda') }   \mathfrak C \mathfrak T_{1,(1)}1,\nonumber
\end{align} or
\begin{align}
&\partial_{x_1}\widetilde{\mathcal CT1 } (t_1,t_2,t;    \lambda' ) \label{E:d-CT1-u12}\\
= &  e^{ i\pi tS_0(a;\lambda') } \iint dx_1'dx_2' \     \left(\partial _{x'_1}[u_0  \mathfrak m_0  ] \right)(x'_1-\frac{2t_2}{3}x'_2,x'_2 )  e^{i\lambda'_I(x_1'+2\lambda'_Rx_2')}
\nonumber\\
\times& \int d\xi_1''   e^{2\pi i t \mathfrak G^\sharp} \mathcal F(t;\lambda';x_1',x_2';\xi_1'') [1-\psi_{1,\frac{\lambda_I'}{2\pi}}( \xi_1'')]  \nonumber\\ 
+&  e^{ i\pi tS_0(a;\lambda') }\iint dx_1'dx_2' \      [u_0  \mathfrak m_0  ](x'_1-\frac{2t_2}{3}x'_2,x'_2 )  e^{i\lambda'_I(x_1'+2\lambda'_Rx_2')}
\nonumber\\
\times& \int d\xi_1''   e^{2\pi i t \mathfrak G^\sharp}   \mathcal F(t;\lambda';x_1',x_2';\xi_1'') \psi_{1,\frac{\lambda_I'}{2\pi}}( \xi_1'')\cdot{\color{black}(2\pi i)}(\xi_1''-\frac{\lambda_I'}{2\pi})   \nonumber\\ 
\equiv  & e^{ i\pi tS_0(a;\lambda') }   \mathfrak C \mathfrak T_{1,(1)}   [1-\psi_{1,\frac{\lambda_I'}{2\pi}}( \xi_1'')]+
 e^{ i\pi tS_0(a;\lambda') } \mathfrak C \mathfrak T_{0,(1)}  \psi_{1,\frac{\lambda_I'}{2\pi}}( \xi_1'')\cdot{\color{black}(2\pi i)}(\xi_1''-\frac{\lambda_I'}{2\pi}) ,\nonumber
\end{align}with $\mathfrak m_0$ satisfying \eqref{E:M}, is holomorphic in $\lambda'_R\lambda'_I$ when $ \lambda' _I\ne 0$.

Moreover,   
\begin{align}
  \partial_{x_1}\widetilde{(\mathcal C T )^n1}(t_1,t_2,t;    \lambda' )\equiv&    e^{\beta_ni\pi   tS_0(a;\lambda')}\left[\mathfrak C \mathfrak T 1\right]^{1,(n)} (t_1,t_2,t;    \lambda' ) \label{E:dct-n-u12}
\end{align} is holomorphic in $\lambda'_R\lambda'_I$ when ${\lambda'}_I\ne 0$. Here 
\begin{align}
    \left[ \mathfrak C \mathfrak T 1 \right]^{1,(n)} (t_1,t_2,t;    \lambda' )\label{E:dct-n-aux-u12-T1} 
    = \sum_{h=1}^n \mathfrak C \mathfrak T_{0, (n)}\cdots \mathfrak C \mathfrak T_{0, (h+1)}\mathfrak C \mathfrak T_{1, (h)} \left[ \mathfrak C \mathfrak T 1 \right]^{0, (h-1)} (t_1,t_2,t;   {\lambda'_R}+2\pi i\xi_h'') , 
    \end{align} or
\begin{align} 
 &\left[ \mathfrak C \mathfrak T 1 \right]^{1,(n)} (t_1,t_2,t;    \lambda' )  \label{E:dct-n-aux-u12} 
 =  \sum_{h=1}^n \mathfrak C \mathfrak T_{0, (n)}\cdots \mathfrak C \mathfrak T_{0, (h+1)}  \\
  & \quad  \times\{\,\mathfrak C \mathfrak T_{1, (h)}[1- \psi_{1,\xi_{h+1}''}(\xi_h'')] +\mathfrak C \mathfrak T_{0, (h)}  \psi_{1,\xi_{h+1}''}( \xi_h''   )\cdot {\color{black}(2\pi i)}(\xi_h''-\xi_{h+1}'') \, \}\nonumber\\
 & \quad  \times \left[ \mathfrak C \mathfrak T 1 \right]^{0, (h-1)} (t_1,t_2,t;   {\lambda'_R}+2\pi i\xi_h''),\nonumber
\end{align} where $\xi''_{n+1}=\frac{\lambda_I'}{2\pi}$.

Finally,
\begin{align}
|\partial_{\lambda'_I}\left[\mathfrak C \mathfrak T1\right]^{1, (n)}| \le   C(1+|\lambda_R'|).   \label{E:0-x-CIO-neg-stationary-1-u21} 
\end{align}

\end{lemma}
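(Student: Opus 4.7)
The plan is to adapt the arguments of Lemmas \ref{L:CIO}, \ref{L:lambda-i-0}, and \ref{L:dCIO} to accommodate the extra derivative $\partial_{x_1}$, which introduces a factor $2\pi i\xi_1'$ into the Fourier representation used in \eqref{E:CT-wickerhauser}. After the change of variables $\xi_1''=\xi_1'+\lambda_I'/(2\pi)$ this factor becomes $2\pi i(\xi_1''-\lambda_I'/(2\pi))$, and is therefore equivalent (up to harmless constants) to either $-\partial_{x_1'}$ acting on $[u_0\mathfrak m_0]$ (via integration by parts in $x_1'$) or to straightforward multiplication by $(\xi_1''-\lambda_I'/(2\pi))$. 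Carrying both forms simultaneously is precisely what \eqref{E:d-CT1-u12} encodes and is essential for the subsequent $u_{2,1}$ analysis: the factor form is small near the singular line $\xi_1''=\lambda_I'/(2\pi)$ where $\mathcal F$ loses its decay, while the derivative form is uniformly bounded away from that line without needing any decay from the amplitude.

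For the base case $n=1$, I would start from the analogue of \eqref{E:CT-wickerhauser} with $j=1$ and redo the residue/Sokhotski--Plemelj computation \eqref{E:inverse-fourier}--\eqref{E:fourier-1} with the extra $2\pi i\xi_1'$ in the integrand. Rewriting $2\pi i\xi_1'$ as $-\partial_{x_1'}$ on the Fourier phase and integrating by parts in $x_1'$ yields \eqref{E:d-CT1-u12-T1}; inserting the partition $\psi_{1,\lambda_I'/(2\pi)}(\xi_1'')+[1-\psi_{1,\lambda_I'/(2\pi)}(\xi_1'')]=1$ and reversing the integration by parts on the $\psi$-piece produces \eqref{E:d-CT1-u12}. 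Uniform convergence of the $\xi_1''$-integral as $x_2'+3t\lambda_R'\to 0^\pm$ is then obtained from an integration-by-parts decomposition identical to \eqref{E:I}--\eqref{E:III}, since the extra factor is either bounded on the support of $\psi_{1,\lambda_I'/(2\pi)}$ or has been absorbed into $\partial_{x_1'}[u_0\mathfrak m_0]$, which is controlled by \eqref{E:M}. Holomorphy in $\lambda_R'\lambda_I'$ for $\lambda_I'\ne 0$ is inherited from the Hilbert-transform representation \eqref{E:inverse-fourier}.

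For $n\ge 2$, I would induct from
\[\widetilde{(\mathcal CT)^n1}(\lambda')=-\frac{1}{2\pi i}\iint\frac{\widetilde s_c(\zeta')e^{2\pi itS_0(\zeta')}}{\zeta'-\lambda'}\widetilde{(\mathcal CT)^{n-1}1}(\overline\zeta')\,d\overline\zeta'\wedge d\zeta'\]
and apply the Leibniz rule to $\partial_{x_1}$. Either the derivative is absorbed at the outermost level, producing the $h=n$ summand of \eqref{E:dct-n-aux-u12-T1} via the base case, or it acts on $\widetilde{(\mathcal CT)^{n-1}1}$, in which case the induction hypothesis supplies the summands $1\le h\le n-1$; the shift of the cutoff base point from $\lambda_I'/(2\pi)$ to $\xi_{h+1}''$ in \eqref{E:dct-n-aux-u12} is automatic, since $[\mathfrak C\mathfrak T1]^{0,(h-1)}$ is evaluated at $\lambda_R'+2\pi i\xi_{h+1}''$ at the $(h+1)$-th iteration. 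At each composition the uniform-convergence argument \eqref{E:rep-airy-h}--\eqref{E:I-n} of Lemma \ref{L:dCIO} applies, which is precisely where Lemma \ref{L:lambda-i-0} is invoked. The first bound in \eqref{E:0-x-CIO-neg-stationary-1-u21} then reduces to $n$ iterations of Lemma \ref{L:wickerhauser}, and the second follows from the scheme of \eqref{E:0-d-i-CT1-prime} combined with the identity \eqref{E:0-new-potential} and the inductive bound \eqref{E:0-x-CIO-neg-stationary-1}.

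The main obstacle will be the bookkeeping in the inductive step: keeping the cutoffs $\psi_{1,\xi_{h+1}''}(\xi_h'')$ aligned through the nested substitutions, and verifying that at each level the factor $(\xi_h''-\xi_{h+1}'')$ is genuinely available on the support of $\psi_{1,\xi_{h+1}''}$ to compensate for the failure of decay of $\mathcal F^{(h)}$ near $\xi_h''=\pm\xi_{h+1}''$, while on the complementary region $\partial_{x_{1,h}'}[u_0\mathfrak m_0]$ is the usable substitute. Balancing these two representations at every level is precisely what will permit the subsequent $u_{2,1}$ analysis to avoid imposing the nonphysical integrability conditions on $\partial_{\lambda_I'}s_c$ and $\lambda's_c$ used in \cite{Ki06}.
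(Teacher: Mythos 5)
Your proposal is correct and follows essentially the same route as the paper: the paper's own proof is a one-sentence reference back to the arguments of Lemmas \ref{L:CIO} and \ref{L:dCIO} via the representation formula \eqref{E:dct-n-aux-u12-T1}, and your expansion --- the residue computation carrying the extra factor $2\pi i\xi_1'$, the integration-by-parts-in-$x_1'$ versus multiplication-by-$(\xi_1''-\frac{\lambda_I'}{2\pi})$ dichotomy producing \eqref{E:d-CT1-u12-T1} and \eqref{E:d-CT1-u12}, and the Leibniz-rule induction generating the sum over $h$ in \eqref{E:dct-n-aux-u12-T1} --- is exactly what that reference is implicitly invoking. No gaps; your sketch in fact supplies more detail than the paper does.
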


\begin{proof} 
Using the representation formulas \eqref{E:d-CT1-u12-T1} and  \eqref{E:dct-n-aux-u12-T1}, the proof proceeds by the same argument as in Lemma \ref{L:CIO} and \ref{L:dCIO}. 

\end{proof}

Note that when $n=1$, \eqref{E:dct-n-aux-u12} and \eqref{E:dct-n-aux-u12-T1} reduce  to \eqref{E:d-CT1-u12} and \eqref{E:d-CT1-u12-T1} respectively upon  identifying  that $\mathfrak C \mathfrak T_{0, (n)}\cdots \mathfrak C \mathfrak T_{0, (h+1)}=\left[ \mathfrak C \mathfrak T 1 \right]^{0, (h-1)}=1$ and $\xi''_{n+1}=\frac{\lambda_I'}{2\pi}$. For brevity, we will henceforth use \eqref{E:dct-n-u12}-\eqref{E:dct-n-aux-u12} to denote $\partial_{x_1}\widetilde{(\mathcal C T )^n1}$ for all $n\ge 1$.

\subsubsection{Asymptotics of the Cauchy integrals}\label{SS:m-u21}\hfill

\begin{proposition} \label{P:cio-0-u21}
   If {\color{black}  the assumption of Theorem \ref{T:u}} holds for $ u_0 \in\mathfrak M^{1,q}$  then  for $\color{black}|a|>\delta> 0$,   
\be  
  {\color{black} | \partial_{x_1} \widetilde{\left(\mathcal CT\right)^n1}|\le C_\epsilon(t^{-\frac12+\epsilon}), \quad\textit{ as $t\to\infty$}.}\label{E:CIO-+new-u21}
\ee   
\end{proposition}

\begin{proof}
Using the representation formula \eqref{E:dct-n-aux-u12-T1},  the   proof proceeds by the same argument as in Proposition \ref{P:cio-0}.  
\end{proof}

Without the factor $(\overline \lambda'-\lambda')$, it becomes more difficult to justify when the integrability of $(1+|\lambda'|)\widetilde s_c$ holds. To address this, we
 require the following reduction results.
\begin{proposition}\label{P:first-reduction-u12}   {\color{black}Suppose    the assumptions of Theorem \ref{T:u}  hold.            Define
\be\label{E:confine-u20}
 \Xi(\lambda')=\theta(10r-|\lambda'_R|)+\theta(|\lambda'_R|-10r )\theta( |\lambda'_I|- \frac{r}{10} ). 
\ee
Then  
\be\label{E:confine--u20}
 |\Xi(\lambda')\widetilde {  s}_c(\lambda'  )|_{L^1(d\lambda'_R d\lambda'_I)}\le C, 
\ee
and, as $t\to\infty$,  
\begin{align}
& |u_{2,1}(x)|\le   C\sum_{n=1}^\infty \sum_{h=1}^{n} |\iint d\overline\lambda'\wedge d\lambda'\ e^{\beta_{n+1}2\pi i tS_0 } \widetilde {  s}_c(\lambda'  ) \Xi(\lambda') \theta(|\lambda_R'|-t^{-5/9})\label{E:first-reduction-2-sum}\\
 \times&\mathfrak C \mathfrak T_{0,(n)}\cdots   \mathfrak C \mathfrak T_{0,(h)} \theta(t|\lambda_R'|-|x_{2,h}'|){\color{black}\psi_{1,\xi_{h+1}''}( \xi_h''   )}  (\xi_h''-\xi_{h+1}'')  \left[ \mathfrak C \mathfrak T 1 \right]^{0,(h-1)}|+  \mathcal O(t^{-1}) , \nonumber
 \end{align}and
 \begin{align}
& |u_{2,1}(x)|\le   C\sum_{n=1}^\infty \sum_{h=1}^{n} |\iint d\overline\lambda'\wedge d\lambda'\ e^{\beta_{n+1}2\pi i tS_0 } \widetilde {  s}_c(\lambda'  ) \Xi(\lambda') \theta(|\lambda_R'|-t^{\color{black}-\frac 12-2\epsilon})\label{E:first-reduction-2-sum-epsilon}\\
 \times&\mathfrak C \mathfrak T_{0,(n)}\cdots   \mathfrak C \mathfrak T_{0,(h)} \theta(t|\lambda_R'|-|x_{2,h}'|){\color{black}\psi_{1,\xi_{h+1}''}( \xi_h''   )}  (\xi_h''-\xi_{h+1}'')  \left[ \mathfrak C \mathfrak T 1 \right]^{0,(h-1)}|+  \mathcal O_\epsilon(t^{-1}) . \nonumber
 \end{align}   }
 
\end{proposition}

\begin{proof}  Estimate \eqref{E:confine--u20} follows from \eqref{E:intro-s-c-ana-c-0} and \eqref{E:real-ima-new} directly. 

{\color{black}\noindent $\blacktriangleright$ {\it Step 1}:  In this step, we will establish
\begin{align}
  &  \iint d\overline\lambda'\wedge d\lambda' \ e^{\beta_{n+1}2\pi i tS_0 }  \widetilde {  s}_c(\lambda'  )   \theta( |\lambda'_R|- 10r ) \theta( \frac{r}{10} - |\lambda'_I| )     \label{E:first-reduction-2-add1-dec-10-u21}\\
\times&\mathfrak C \mathfrak T_{0,(n)}\cdots   \mathfrak C \mathfrak T_{1,(h)} \left[ \mathfrak C \mathfrak T 1 \right]^{0,(h-1)}|\sim\mathcal O(t^{-1})\nonumber.
 \end{align}
 
 In view of  
\be\label{E:first-reduction-5-u21}
|\theta( |\lambda'_R|- 10r )\theta( \frac{r}{10} - |\lambda'_I| )\partial_{\lambda'_I}S_0|\ge  {   {\lambda'}_R ^2 }/C,
\ee we 
apply  integration by parts with respect to $\lambda'_I$. Using \eqref{E:intro-s-c-ana-c-0}, Proposition \ref{P:cio-0-u21}, and \eqref{E:first-reduction-5-u21},   it reduces to justifying
\be\label{E:chi-integrability} 
|\theta( |\lambda'_R|- 10r ) \theta( \frac{r}{10} - |\lambda'_I| )  \partial_{\lambda_I'} \left[ \mathfrak C \mathfrak T 1 \right]^{1,(n)} |\le \mathcal O(1),
\ee which amounts to showing the following inequalities
\begin{align}
|\theta( |\lambda'_R|- 10r ) \theta( \frac{r}{10} - |\lambda'_I| )  \partial_{\lambda_I'} \mathfrak C \mathfrak T_{1,(n)} \left[ \mathfrak C \mathfrak T 1 \right]^{0,(n-1)}|\le &\mathcal O(1),
\label{E:chi-integrability-1} \\
|\theta( |\lambda'_R|- 10r ) \theta( \frac{r}{10} - |\lambda'_I| )  \partial_{\lambda_I'}\mathfrak C \mathfrak T_{0,(n)} \left[ \mathfrak C \mathfrak T 1 \right]^{1,(n-1)}|\le &\mathcal O(1).\label{E:chi-integrability-2} 
\end{align}

We provide the proof for \eqref{E:chi-integrability-2} firstly. Thanks to $u_0\in\mathfrak M^{3,q}$, 
\begin{align}
&| \partial_{\lambda_I'}\mathfrak C \mathfrak T_{0,(n)} \left[ \mathfrak C \mathfrak T 1 \right]^{1,(n-1)}|\label{E:chi-1}\\
\le & | \partial_{\lambda_I'}\mathfrak C \mathfrak T_{0,(n)} \theta(t|\lambda_R'|-|x_{2,n}'|)\theta( ||\xi_n''|-\frac{|\lambda_I'|}{2\pi} |-r)\left[ \mathfrak C \mathfrak T 1 \right]^{1,(n-1)}|\nonumber\\
+&| \partial_{\lambda_I'}\mathfrak C \mathfrak T_{0,(n)} \theta(t|\lambda_R'|-|x_{2,n}'|)[1-\theta(||\xi_n''|-\frac{|\lambda_I'|}{2\pi} |-r)]\left[ \mathfrak C \mathfrak T 1 \right]^{1,(n-1)}|+o( t^{-1})\nonumber\\
\equiv&I_1+I_2+o( t^{-1})\nonumber.
\end{align} 

From
$\theta( |\lambda'_R|- 10r )\theta(t|\lambda_R'|-|x_{2,n}'|)\theta( ||\xi_n''|-\frac{|\lambda_I'|}{2\pi} |-r) |(x_{2,n}'+3t\lambda_R')(\xi_n''-\frac{\lambda_I'}{2\pi})(\xi_n''+\frac{\lambda_I'}{2\pi})|>\frac{t}C$,
\be\label{E:chi-2}
|I_1|\le o(t^{-1}). 
\ee

For $I_2$,   we have
\be\label{E:first-reduction-5-u21-G}
|\theta( |\lambda'_R|- 10r )\theta( \frac{r}{10} - |\lambda'_I| )[1-\theta(||\xi_n''|-\frac{|\lambda_I'|}{2\pi} |- r)]\partial_{\xi''_n}\mathfrak S |\ge   {   {\lambda'}_R ^2 } /C.
\ee Therefore, applying integration by parts with respect to $\xi''_n$,
\begin{align}
& I_2 
\le  \frac{C}{t}|\iint dx_{1,n}'dx_{2,n}'[u_0m_0]\theta(t|\lambda_R'|-|x_{2,n}'|)(x_{1,n}'+2\lambda_R'x_{2,n}')e^{i\lambda_I'(x_{1,n}'+2\lambda_R'x_{2,n}')}\label{E:chi-3}\\
\times&\int d\xi_n''e^{2\pi it \mathfrak S^\sharp}\partial_{\xi''_n}\{\ \frac{[1-\theta(||\xi_n''|-\frac{|\lambda_I'|}{2\pi} |-r)]\theta(-(x_{2,n}'+3t\lambda_R')(\xi_n''-\frac{\lambda_I'}{2\pi})(\xi_n''+\frac{\lambda_I'}{2\pi}))}{\partial_{\xi''_n}\mathfrak S}\nonumber\\
\times&e^{4\pi^2(x_{2,n}'+3t\lambda_R')(\xi_n''-\frac{\lambda_I'}{2\pi})(\xi_n''+\frac{\lambda_I'}{2\pi})}\nonumber\\
+&\frac{C}{t}|\iint dx_{1,n}'dx_{2,n}'[u_0m_0]\theta(t|\lambda_R'|-|x_{2,n}'|) e^{i\lambda_I'(x_{1,n}'+2\lambda_R'x_{2,n}')}\partial_{\lambda'_I} \nonumber\\
\times&\int d\xi_n''e^{2\pi it \mathfrak S^\sharp}e^{4\pi^2(x_{2,n}'+3t\lambda_R')(\xi_n''-\frac{\lambda_I'}{2\pi})(\xi_n''+\frac{\lambda_I'}{2\pi})}\nonumber\\
\times&\partial_{\xi''_n}\frac{[1-\theta(||\xi_n''|-\frac{|\lambda_I'|}{2\pi} |-r)]\theta(-(x_{2,n}'+3t\lambda_R')(\xi_n''-\frac{\lambda_I'}{2\pi})(\xi_n''+\frac{\lambda_I'}{2\pi})) }{\partial_{\xi''_n}\mathfrak S}\nonumber\\
+&\frac{C}{t}|\iint dx_{1,n}'dx_{2,n}'[u_0m_0]\theta(t|\lambda_R'|-|x_{2,n}'|) e^{i\lambda_I'(x_{1,n}'+2\lambda_R'x_{2,n}')}\nonumber\\
\times&\partial_{\lambda'_I}\int d\xi_n''e^{2\pi it \mathfrak S^\sharp}\frac{[1-\theta(||\xi_n''|-\frac{|\lambda_I'|}{2\pi} |-r)]\theta(-(x_{2,n}'+3t\lambda_R')(\xi_n''-\frac{\lambda_I'}{2\pi})(\xi_n''+\frac{\lambda_I'}{2\pi})) }{\partial_{\xi''_n}\mathfrak S}\nonumber\\
\times&\partial_{\xi''_n}e^{4\pi^2(x_{2,n}'+3t\lambda_R')(\xi_n''-\frac{\lambda_I'}{2\pi})(\xi_n''+\frac{\lambda_I'}{2\pi})}\nonumber\\
\equiv& I_{2,1}+I_{2,2}+I_{2,3}.\nonumber
\end{align}

Using \eqref{E:first-reduction-5-u21-G}, the factors $\theta( |\lambda'_R|- 10r ) $, $\theta( \frac{r}{10} - |\lambda'_I| )$, $[1-\theta(||\xi_n''|-\frac{|\lambda_I'|}{2\pi} |-r)]$, and $u_0\in \mathfrak M^{3,q}$,  
\be\label{E:chi-4}
|\theta( |\lambda'_R|- 10r ) \theta( \frac{r}{10} - |\lambda'_I| ) I_{2,j}|\le  \frac 1t\frac{{\lambda'}_R^2t }{1+{\lambda'}^2_R}|u_0|_{\mathfrak M^{3,q}},\quad j=1,2,3.
\ee

Combining \eqref{E:chi-1}, \eqref{E:chi-2}, \eqref{E:chi-3}, and \eqref{E:chi-4}, we prove \eqref{E:chi-integrability-2}. Since \eqref{E:chi-integrability-1} can be derived by analogy. We justify \eqref{E:chi-integrability} and \eqref{E:first-reduction-2-add1-dec-10-u21} follows.

\noindent $\blacktriangleright$ {\it Step 4}: Applying \eqref{E:first-reduction-2-add1-dec-10-u21} and following   argument as that in \eqref{E:first-reduction-1-+}, we then establish
\begin{align}
 |u_{2,1}(x)|\le & C\sum_{n=1}^\infty   |\iint d\overline\lambda'\wedge d\lambda'\  e^{\beta_{n+1}2\pi i tS_0 }\widetilde {  s}_c(\lambda'  )  \Xi(\lambda') \theta(|\lambda_R'|-t^{-5/9})\label{E:first-reduction-2-sum-u20}\\
 \times&\mathfrak C \mathfrak T_{0,(n)}\cdots  \{\mathfrak C \mathfrak T_{1,(h)}\theta(t|\lambda_R'|-|x_{2,h}'|)[1-\psi_{1,\xi''_{h+1}}(\xi''_h)] \nonumber\\
+& \mathfrak C \mathfrak T_{0,(h)} \theta(t|\lambda_R'|-|x_{2,h}'|) \psi_{1,\xi_{h+1}''}( \xi_h''   )   (\xi_h''-\xi_{h+1}'')\}\left[ \mathfrak C \mathfrak T 1 \right]^{0,(h-1)}|+\mathcal O(t^{-1}).\nonumber
 \end{align}

Therefore, the proof is then justified by noting
\begin{multline}\label{E:first-reduction-2-add1-new}
\qquad\quad \theta(|\lambda_R'|-t^{-5/9})[1-\psi_{1,\xi_{h+1}''}( \xi_h''   )] \theta(t|\lambda_R'|-|x_{2,h}'|)  \\
\times |(  x_{2,h}'+3  t\lambda'_R)(\xi_h''-\xi_{h+1}'')  (\xi_h''+\xi_{h+1}'')|\ge  
 t ^{1-5/9  }/C.  \qquad\quad
\end{multline}Via a completely similar way, \eqref{E:first-reduction-2-sum-epsilon} can be justified.}
\end{proof}

\subsection{Long time asymptotics of $u_{2,1}(x) $ when    $\color{black}a >+\delta>0$ }\label{SS:u+2-u21}

Throughout this subsection, we assume $\color{black}a >+\delta>0$, and define the parameters  $
\psi_{r,w_0}$ and $u_{2,1}$ as in  \eqref{E:localize-fct} and \eqref{E:rep-dec-u21}, respectively. We also set  $  b=  ( -r^2+{\lambda'}_R^2)^{1/2}/{2\pi}$ and adopt the terminology established in Lemma \ref{L:dCIO-u21}.

Building on  \eqref{E:first-reduction-2-sum}, we will decompose the estimates for $u_{2,1}$ into two parts, depending on whether $||\xi_{h}''|- |\xi_{h+1}''||> t^{-6/9}$ or not. Precisely,
\begin{lemma}\label{L:+2ndReduction-P} Suppose {\color{black}  the assumption of Theorem \ref{T:u}} holds.  As $t\to\infty$, 
\begin{align}
  u_{2,1}(x)\le  & C\sum_{n=1}^\infty  |\iint d\overline\lambda'\wedge d\lambda'\ e^{\beta_{n+1}2\pi i tS_0 } \widetilde {  s}_c(\lambda'  ) \Xi(\lambda') \theta(|\lambda_R'|-t^{-5/9}) \label{E:2ndreduction-2-+}\\
\times&\sum_{h=1}^{n} (P_{n,h}^>+ P_{n,h}^< )\left[ \mathfrak C \mathfrak T 1 \right]^{0,(h-1)}|+ \mathcal O(t^{-1})  , \nonumber
 \end{align}where
 \begin{align}
 P_{n,h}^>= &    \mathfrak C \mathfrak T_{0,(n)}\cdots \mathfrak C \mathfrak T_{0,(h)}   {\color{black}\psi_{1,\xi_{h+1}''}( \xi_h''   )} (\xi_h''-\xi_{h+1}'') \label{E:--first-reduction-2}\\
\times&\theta(t|\lambda_R'|-|x_{2,h}'|)
\theta(||\xi_h''|- |\xi_{h+1}''||-t^{-6/9}) \nonumber,\\
 P_{n,h}^<=&   \mathfrak C \mathfrak T_{0,(n)}\cdots\{\ \mathfrak C \mathfrak T_{0,(h+1)}(-2\xi_{h+1}'') \nonumber\\
\times& \theta(t|\lambda_R'|-|x_{2,h+1}'|)  {\color{black}\psi_{1,\xi_{h+2}''}( \xi_{h''+1}  )} \theta(||\xi_{h+1}''|- |\xi_{h+2}''||-t^{-6/9})
   \nonumber\\
\times& \mathfrak C \mathfrak T_{0,(h)}    \theta(t|\lambda_R'|-|x_{2,h}'|)  \theta(t^{-6/9}-|\xi_h''+\xi_{h+1}''|)  \theta(|\xi_h'' -\xi_{h+1}''|-t^{-6/9})\ \}  .\nonumber 
\end{align}
Here,  for brevity,  when $h=n$, we  identify  
\begin{multline}\label{E:assertion} 
\qquad\mathfrak C \mathfrak T_{0,(n)}\cdots \mathfrak C \mathfrak T_{0,(h+1)} (-2\xi_{h+1}'')\theta(t|\lambda_R'|-|x_{2,h+1}'|) \\
    {\color{black}\psi_{1,\xi_{h+2}''}( \xi_{h'' +1}  )} \theta(||\xi_{h+1}''|- |\xi_{h+2}''||-t^{-6/9})= -\frac{\lambda_I'}{\pi}.\qquad \end{multline}
\end{lemma} 
\begin{proof}From  \eqref{E:first-reduction-2-sum}, it reduces to studying
\begin{align}
&  \sum_{n=1}^\infty \sum_{h=1}^{ n} |\iint d\overline\lambda'\wedge d\lambda' \ e^{\beta_{n+1}2\pi i tS_0 }  \widetilde {  s}_c(\lambda'  ) \Xi(\lambda')\theta(|\lambda_R'|-t^{-5/9})     \nonumber\\
\times&\mathfrak C \mathfrak T_{0,(n)}\cdots
     \mathfrak C \mathfrak T_{0,(h)}   {\color{black}\psi_{1,\xi_{h+1}''}( \xi_h''   )} (\xi_h''-\xi_{h+1}'') \theta(t|\lambda_R'|-|x_{2,h}'|) \nonumber\\
\times& {\color{black}[1-\theta(||\xi_h''|- |\xi_{h+1}''||-t^{-6/9}) ]} \left[ \mathfrak C \mathfrak T 1 \right]^{0,(h-1)}|, \nonumber 
\end{align} which is less than
\begin{align}
 & \sum_{n=1}^\infty \sum_{h=1}^{ n}    |\iint d\overline\lambda'\wedge d\lambda' \ e^{\beta_{n+1}2\pi i tS_0 }\widetilde {  s}_c(\lambda'  )\Xi(\lambda') \theta(|\lambda_R'|-t^{-5/9})    \label{E:first-reduction-23}\\
\times&\mathfrak C \mathfrak T_{0,(n)}\cdots
     \mathfrak C \mathfrak T_{0,(h)} \theta(t^{-6/9}-| \xi_h'' -\xi_{h+1}'' | ) (\xi_h''-\xi_{h+1}'')   \nonumber\\
\times&\theta(t|\lambda_R'|-|x_{2,h}'|) \left[ \mathfrak C \mathfrak T 1 \right]^{0,(h-1)} |\nonumber\\
+& \sum_{n=1}^\infty \sum_{h=1}^{ n}   |\iint d\overline\lambda'\wedge d\lambda' \  e^{\beta_{n+1}2\pi i tS_0 }  \widetilde {  s}_c(\lambda'  )\Xi(\lambda') \theta(|\lambda_R'|-t^{-5/9})  \nonumber\\
\times&\mathfrak C \mathfrak T_{0,(n)}\cdots
    \mathfrak C \mathfrak T_{0,(h)} \theta(|\xi_h'' -\xi_{h+1}''|-t^{-6/9})\theta(t^{-6/9}-| \xi_h'' +\xi_{h+1}''|  ) (\xi_h''+\xi_{h+1}'') \nonumber\\
\times& \theta(t|\lambda_R'|-|x_{2,h}'|)\left[ \mathfrak C \mathfrak T 1 \right]^{0,(h-1)} |\nonumber\\
+& \sum_{n=1}^\infty \sum_{h=1}^{ n}    |\iint d\overline\lambda'\wedge d\lambda' \ e^{\beta_{n+1}2\pi i tS_0 } \widetilde {  s}_c(\lambda'  ) \Xi(\lambda') \theta(|\lambda_R'|-t^{-5/9})  \nonumber\\
\times&\mathfrak C \mathfrak T_{0,(n)}\cdots
    \mathfrak C \mathfrak T_{0,(h)} \theta(|\xi_h'' -\xi_{h+1}''|-t^{-6/9}) \theta(t^{-6/9}-|  \xi_h''+\xi_{h+1}''|   )   (-2\xi_{h+1}'') \nonumber\\
\times& \theta(t|\lambda_R'|-|x_{2,h}'|) \left[ \mathfrak C \mathfrak T 1 \right]^{0,(h-1)}  |  \nonumber\\
\equiv&\sum_{n=1}^\infty \sum_{h=1}^{ n}  Q_{n,h}^{>,-}+\sum_{n=1}^\infty \sum_{h=1}^{ n}Q_{n,h}^{>,+}+\sum_{n=1}^\infty \sum_{h=1}^{ n}Q_{n,h}^< .\nonumber
\end{align}

Using \eqref{E:confine--u20} and   $|(\xi_h''\pm \xi_{h+1}'') 
\theta(t^{-6/9}-|\xi_h''\pm \xi_{h+1}''|)|_{L^1(d\xi_h'')}\le   C  (t^{-6/9\times 2})$,
we obtain
\be \label{E:AddReduction-dec-1}
\sum_{n=1}^\infty \sum_{h=1}^{ n} Q_{n,h}^{>,\pm}  \le C   t^{-6/9\times 2}  .
\ee

Applying the above argument, we have
\begin{align} 
&\sum_{n=1}^\infty \sum_{h=1}^{ n}Q_{n,h}^< \label{E:AddReduction-dec-2}\\ 
\le& \sum_{n=1}^\infty \sum_{h=1}^{ n}|\iint d\overline\lambda'\wedge d\lambda' \ e^{\beta_{n+1}2\pi i tS_0 } \widetilde {  s}_c(\lambda'  )\Xi(\lambda')   \theta(|\lambda_R'|-t^{-5/9}) \nonumber\\
\times&\mathfrak C \mathfrak T_{0,(n)}\cdots
    \mathfrak C \mathfrak T_{0,(h+1)} (-2\xi_{h+1}'') \theta(t|\lambda_R'|-|x_{2,h+1}'|) 
   \nonumber\\
\times&  {\color{black}\psi_{1,\xi_{h+2}''}( \xi_{h''+1}   )} \theta(||\xi_{h+1}''|- |\xi_{h+2}''||-t^{-6/9})     \nonumber\\
\times&\mathfrak C \mathfrak T_{0,(h)}   \theta(t|\lambda_R'|-|x_{2,h}'|)\theta( t^{-6/9}-|\xi_h'' +\xi_{h+1}''  |) \theta(|\xi_h'' -\xi_{h+1}''|-t^{-6/9})  \left[ \mathfrak C \mathfrak T 1 \right]^{0,(h-1)} |  \nonumber\\
+&Ct^{-6/9\times 2} +  o(t^{-1})\nonumber\\
= &\sum_{n=1}^\infty  |\iint d\overline\lambda'\wedge d\lambda'\ e^{\beta_{n+1}2\pi i tS_0 } \widetilde {  s}_c(\lambda'  )\Xi(\lambda') \theta(|\lambda_R'|-t^{-5/9})  P_{n,h}^<\left[ \mathfrak C \mathfrak T 1 \right]^{0,(h-1)}|+  o(t^{-1}) \nonumber.
\end{align}


\end{proof}

The next lemma allows us to restrict our attention to the regime $|\lambda_R'|>r/C$, which is a weaker condition than requiring $\lambda'$ to lie in the support of $\chi(\lambda')$ (cf. Lemma \ref{L:+3ndReduction-outside}). Nevertheless, it is sufficient for deriving asymptotics away from the vicinity of $\pm  \xi''_{h+1}$.
\begin{lemma}\label{L:+2ndReduction-u21-outside}
  Suppose {\color{black}  the assumption of Theorem \ref{T:u}} holds.  As $t\to\infty$,  
\begin{align}
 &  \sum_{n=1}^\infty  |\iint d\overline\lambda'\wedge d\lambda'  \ e^{\beta_{n+1}2\pi i tS_0 }\widetilde {  s}_c(\lambda'  ) \Xi(\lambda')  \theta(|\lambda_R'|-t^{-5/9}){\color{black}\theta( a-3{\lambda'_R}^2) } \label{E:-+2ndReduction-(1)-u21-outside}\\
\times&\sum_{h=1}^{n}(P_{n,h}^>+ P_{n,h}^< )[1-\psi_{r,r}(\lambda'_R){\color{black}\psi_{ 5r,0}}(2\pi\xi_{h+1}'')]\left[ \mathfrak C \mathfrak T 1 \right]^{0,(h-1)}|\le  o(t^{-1}).\nonumber
 \end{align}

\end{lemma} 
\begin{proof}By assumption there is no stationary point and $|{\lambda_R'}|\le r$, and   the analysis can be reduced to cases:  
\begin{itemize}
\item [(1+)]  $\psi_{r,r}(\lambda'_R)\ne 0$ and $ {\color{black}\psi_{ 5r,0}}(2\pi\xi_{h+1}'')=0$; 
\item [(2+)] $ \psi_{r,r}(\lambda'_R)=0$. 
\end{itemize}
Notice that  $\partial_{\xi''_{h+1}}\mathfrak  S(a;\lambda_R';2\pi\xi_{h+1}'')= +12\pi^2 (2\pi\xi_{h+1}'')^2+(a-3{\lambda'}_R^2  )\ge r/C$ for both cases. Therefore, integration by parts with respect to $\xi_{h+1}''$, using $|{\lambda_R'}|\le r $, Lemmas \ref{L:sd-der-new}  and {\color{black}\ref{L:lambda-r}} (cf. Lemma \ref{L:+3ndReduction-outside}), we prove the lemma.

\end{proof}

\begin{lemma}\label{L:+2ndReduction-u21-inside-o}
  Suppose  {\color{black}  the assumption of Theorem \ref{T:u}} holds.  As $t\to\infty$,  
\begin{align}
 &  \sum_{n=1}^\infty  |\iint d\overline\lambda'\wedge d\lambda'  \ e^{\beta_{n+1}2\pi i tS_0 }\widetilde {  s}_c(\lambda'  )\Xi(\lambda')   {\color{black}\theta( a-3{\lambda'_R}^2) } \label{E:-+2ndReduction-(1)-u21-inside-o}\\
\times&\sum_{h=1}^{n} P_{n,h}^> \psi_{r,r}(\lambda'_R){\color{black}\psi_{ 5r,0}}(2\pi\xi_{h+1}'')\left[ \mathfrak C \mathfrak T 1 \right]^{0,(h-1)}|\le  o(t^{-1}) ,\nonumber\\
 & \sum_{n=1}^\infty  |\iint d\overline\lambda'\wedge d\lambda' \ e^{\beta_{n+1}2\pi i tS_0 } \widetilde {  s}_c(\lambda'  ) \Xi(\lambda') {\color{black}\theta( -(a-3{\lambda'_R}^2))} \label{E:++2ndReduction-(1)-u21-inside-o}\\
\times&\sum_{h=1}^{n}P_{n,h}^>  \left[ \mathfrak C \mathfrak T 1 \right]^{0,(h-1)}|\le   o(t^{-1}).\nonumber
 \end{align}

\end{lemma}

\begin{proof} We  will first discard terms with rapidly decaying amplitudes. Then, through a refined decomposition, we derive the necessary estimates by leveraging the smallness of the integration domains and the   factor  or $(\xi_h''\pm \xi_{h+1}'')$. Integration by parts is not required in the proof.

To prove \eqref{E:++2ndReduction-(1)-u21-inside-o}, decompose 
\begin{align}
  &  \theta(t|\lambda_R'|-|x_{2,h}'|) \theta(||\xi_h''|- |\xi_{h+1}''||-t^{-6/9}) \label{E:+3rdReduction-1-u21}\\
=&   \theta(t|\lambda_R'|-|x_{2,h}'|) \theta(||\xi_h''|- |\xi_{h+1}''||-t^{-4.4/9}) \nonumber\\
+&  \theta(t|\lambda_R'|-|x_{2,h}'|) {\color{black}[1- \theta(||\xi_h''|- |\xi_{h+1}''||-t^{-4.4/9})]}\theta(||\xi_h''|- |\xi_{h+1}''||-t^{-6/9})
.\nonumber
\end{align}

Thanks to ${\color{black}\theta( -(a-3{\lambda'_R}^2))}$, $|\lambda_R'|>r/2$ as $t\gg 1$. Hence the $L^1(d\xi''_h)$-norm of the amplitude function  $ \mathcal F^{(h)}$ on the corresponding domain of the first term is less than $  o(t^{-1})$. Together with  \eqref{E:confine--u20} and Lemma \ref{L:+2ndReduction-P}, it reduces to showing
\begin{align}
 &\sum_{n=1}^{\infty} \sum_{h=1}^{n}    \iint d\overline\lambda'\wedge d\lambda' \ e^{\beta_{n+1}2\pi i tS_0 }   \widetilde {  s}_c(\lambda'  )\Xi(\lambda')  {\color{black}\theta( -(a-3{\lambda'_R}^2))}    \label{E:+3rdReduction-2-u21}\\
\times &  P_{n,h}^>  
{\color{black}[1- \theta(||\xi_h''|- |\xi_{h+1}''||-t^{-4.4/9})]}\left[ \mathfrak C \mathfrak T 1 \right]^{0,(h-1)}\sim o(t^{-1}). \nonumber
\end{align}

Notice
\begin{align}
&\textit{LHS of \eqref{E:+3rdReduction-2-u21} }\label{E:+3rdReduction-5-u21}\\
\le & \sum_{n=1}^\infty\sum_{h=1}^n|\iint d\overline\lambda'\wedge d\lambda' \ e^{\beta_{n+1}2\pi i tS_0 }\widetilde {  s}_c(\lambda'  )\Xi(\lambda'){\color{black}\theta( -(a-3{\lambda'_R}^2))} \nonumber\\
\times&\mathfrak C\mathfrak T_{0,(n)}\cdots \mathfrak C\mathfrak T_{0,(h+1)}  \psi_{ t^{-2.5/9},0}(2\pi \xi_{h+1}'')\nonumber\\
\times&\mathfrak C\mathfrak T_{0,(h)}{\color{black}[1- \theta(||\xi_h''|- |\xi_{h+1}''||-t^{-4.4/9})]}(\xi_h''- \xi_{h+1}'')\theta(t|\lambda_R'|-|x_{2,h}'|)\nonumber\\
\times&\theta(||\xi_h''|- | \xi_{h+1}''||-t^{-6/9})\left[\mathfrak C\mathfrak T1\right]^{0,(h-1)}|\nonumber\\
+&\sum_{n=1}^\infty\sum_{h=1}^n|\iint d\overline\lambda'\wedge d\lambda' \ e^{\beta_{n+1}2\pi i tS_0 }\widetilde {  s}_c(\lambda'  )\Xi(\lambda') {\color{black}\theta( -(a-3{\lambda'_R}^2))} \nonumber\\
\times&\mathfrak C\mathfrak T_{0,(n)}\cdots \mathfrak C\mathfrak T_{0,(h+1)}[1-\psi_{t^ {-2.5/9},0}(2\pi \xi_{h+1}'')]\nonumber\\
\times&\mathfrak C\mathfrak T_{0,(h)}{\color{black}[1- \theta(||\xi_h''|- |\xi_{h+1}''||-t^{-4.4/9})]}(\xi_h''- \xi_{h+1}'')\theta(t|\lambda_R'|-|x_{2,h}'|)\nonumber\\
\times& \theta(||\xi_h''|- | \xi_{h+1}''||-t^{-6/9})\left[\mathfrak C\mathfrak T1\right]^{0,(h-1)}| 
\equiv II_1+II_2.\nonumber
\end{align} 

Using  
\begin{gather}
|(\psi_{ t^{-2.5/9},0}(2\pi \xi_{h+1}'')|_{L^1(d\xi_{h+1}'')}\le Ct^{-2.5/9},\nonumber\\
|(\xi_h''- \xi_{h+1}'') {\color{black}[1- \theta(||\xi_h''|- |\xi_{h+1}''||-t^{-4.4/9})]}|_{L^1(d\xi_h'')}\le C(t^{-4.4/9 }+|\xi_{h+1}''|)t^{-4.4/9 },\label{E:+3rdReduction-9-1}
\end{gather}and  \eqref{E:confine--u20}, we obtain  
\be\label{E:+3rdReduction-9} 
\begin{split}
|II_1|\le C \left(\mathcal O(t^{-2.5/9\times 2-4.4/9 })+\mathcal O(t^{-2.5/9-4.4/9\times 2})\right).
\end{split}
\ee 

Besides, 
on the support of $(1-\psi_{ t^{-2.5/9},0}(2\pi \xi_{h+1}''))$, distance between $\pm \xi_{h+1}''$ is greater than $\mathcal O(t^{-2.5/9})$. Combining with $|\lambda_R'|>r $ on the support of ${\color{black}\theta( -(a-3{\lambda'_R}^2))}$,  
\begin{multline*}
 (1-\psi_{ t^{-2.5/9},0}(2\pi \xi_{h+1}''))\theta(t|\lambda_R'|-|x_{2,h}'|){\color{black}\theta( -(a-3{\lambda'_R}^2))} \\
\times \theta(||\xi_h''|- |\xi_{h+1}''||-t^{-6/9})  |(  x_{2,h}'+3  t\lambda'_R)(\xi_h''-\xi_{h+1}'')  (\xi_h''+\xi_{h+1}'')| 
\ge   t^{1-6/9-2.5/9}/C,
\end{multline*}which, together with  \eqref{E:confine--u20}, implies 
 \be\label{E:+3rdReduction-5-d} 
\begin{split}
|II_2|\le    o(t^{-1}).
\end{split}
\ee
Therefore, \eqref{E:+3rdReduction-2-u21} is justified.

  Since $|\lambda_R'|>r/C$ is assured by the factor $\psi_{r,r}(\lambda'_R)$. We can prove  \eqref{E:-+2ndReduction-(1)-u21-inside-o} by analogy.

\end{proof}

The following lemma shows that the obstruction to obtaining an $o(t^{-1})$ estimate for $u_{2,1}$ lies in the vicinity of $-\xi_{h+1}''$.
\begin{lemma}\label{L:+2ndReduction-u21-inside}
  Suppose {\color{black}  the assumption of Theorem \ref{T:u}} holds.  As $t\to\infty$,  
\begin{align}
 &  \sum_{n=1}^\infty \sum_{h=1}^{n}  |\iint d\overline\lambda'\wedge d\lambda'\ e^{\beta_{n+1}2\pi i tS_0 } \widetilde {  s}_c(\lambda'  )\Xi(\lambda')  {\color{black}\theta( a-3{\lambda'_R}^2)} \label{E:-+2ndReduction-(1)-u21-inside}\\
\times& P_{n,h}^<  \psi_{r,r}(\lambda'_R){\color{black}\psi_{ 5r,0}}(2\pi\xi_{h+1}'')\left[ \mathfrak C \mathfrak T 1 \right]^{0,(h-1)}|\le  {\color{black}\mathcal O(t^{-1})} ,\nonumber\\
 & \sum_{n=1}^\infty \sum_{h=1}^{n} |\iint d\overline\lambda'\wedge d\lambda' \ e^{\beta_{n+1}2\pi i tS_0 }\widetilde {  s}_c(\lambda'  ) \Xi(\lambda'){\color{black}\theta( -(a-3{\lambda'_R}^2))} \label{E:++2ndReduction-(1)-u21-inside}\\
\times& P_{n,h}^<  \left[ \mathfrak C \mathfrak T 1 \right]^{0,(h-1)}|\le   {\color{black}\mathcal O(t^{-1}) }.\nonumber
 \end{align}

\end{lemma} 

\begin{proof} From  \eqref{E:confine--u20} and \eqref{E:+3rdReduction-1-u21}, to prove \eqref{E:++2ndReduction-(1)-u21-inside}, it reduces to justifying
\begin{align}
 &\sum_{n=1}^{\infty} \sum_{h=1}^{n}   |\iint d\overline\lambda'\wedge d\lambda' \ e^{\beta_{n+1}2\pi i tS_0 } \widetilde {  s}_c(\lambda'  )\Xi(\lambda'){\color{black}\theta( -(a-3{\lambda'_R}^2))} {\color{black}\psi_{r,r}(\lambda'_R)} \label{E:+3rdReduction-3-u21}\\
\times&  \mathfrak C\mathfrak T_{0,(n)}\cdots  \mathfrak C \mathfrak T_{0,(h+1)} (-2\xi_{h+1}''){\color{black}\psi_{ 5r,0}}(2\pi\xi_{h+1}'') {\color{black}\psi_{1,\xi_{h+2}''}(\xi_{h+1}''  )}\theta(t|\lambda_R'|-|x_{2,h+1}'|)\nonumber\\
\times& \theta(||\xi_{h+1}''|- |\xi_{h+2}''||-t^{-6/9}){\color{black}[1-\theta(||\xi_h''|- |\xi_{h+1}''||-t^{-4.4/9})]}
   \nonumber\\
\times&  \mathfrak C \mathfrak T_{0,(h)}    \theta(t|\lambda_R'|-|x_{2,h}'|)  \theta(t^{-6/9}-|\xi_h''+\xi_{h+1}''|)\theta(|\xi_h''-\xi_{h+1}''|-t^{-6/9})   \left[ \mathfrak C \mathfrak T 1 \right]^{0,(h-1)}|\nonumber\\
\le&   {\color{black}\mathcal O(t^{-1}) } .
 \nonumber
 \end{align}
 
 To this aim,  decomposing $-2\xi_{h+1}''=-2(\xi_{h+1}''-\xi_{h+2}'')+2 \xi_{h+2}''$,  applying Lemma \ref{L:+2ndReduction-u21-inside-o},   an induction, and Theorem \ref{T:+u-2}, we have 
\be\label{E:+3rdReduction-4-u21}
\textit{LHS of \eqref{E:+3rdReduction-3-u21} }\le  {\color{black}\mathcal O(t^{-1}) } .
\ee

In an entirely similar way, we can prove \eqref{E:-+2ndReduction-(1)-u21-inside}.
\end{proof}

Combining  Proposition \ref{P:first-reduction-u12}, Lemmas \ref{L:+2ndReduction-P}-\ref{L:+2ndReduction-u21-inside}, we conclude:
\begin{theorem}\label{T:+u-2-u21}  Assume {\color{black}  the assumption of Theorem \ref{T:u}} holds. As $t\to +\infty$,
\be \label{E:+u-2-u21}
   u_{2,1}\sim   {\color{black}\mathcal O(t^{-1}) }.
\ee
\end{theorem}

\subsection{Long time asymptotics of $u_{2,1}(x) $ when    $\color{black}a <-\delta<0$ }\label{S:u-2-u21}

Throughout this section, we assume the hypotheses of Theorem \ref{T:u},  $\color{black}a <-\delta<0$, and define the parameters $a$, $r$, $t_i$,  $t$, $
\psi_{r,w_0}$   as in \eqref{E:phase}, \eqref{E:a},   \eqref{E:stationary--},  and \eqref{E:localize-fct}  respectively. We also set  $  b=  (  r^2+{\lambda'}_R^2)^{1/2}/{2\pi}$ and adopt the terminology established in Lemma   \ref{L:dCIO-u21}. 

Similarly, building on   Proposition \ref{P:first-reduction-u12} {\color{black}and the proof of Theorem \ref{T:-u-2}, we can decompose the estimates for $u_{2,1}$ into two parts, depending on whether $||\xi_{h}''|- |\xi_{h+1}''||> t^{-1+2x}$ or not with $x$ defined by \eqref{E:maximise}. Precisely,}
\begin{lemma}\label{L:-2ndReduction-P} Suppose {\color{black}  the assumption of Theorem \ref{T:u}} holds {\color{black}and  $z$ defined by \eqref{E:maximise}}.  As $t\to\infty$, 
\begin{align}
  u_{2,1}(x)\le  & C\sum_{n=1}^\infty  |\iint d\overline\lambda'\wedge d\lambda' \  e^{\beta_{n+1}2\pi i tS_0 } \widetilde {  s}_c(\lambda'  )\Xi(\lambda')\theta(|\lambda_R'|-t^{\color{black}-\frac 12-2\epsilon})    \label{E:2ndreduction-2--}\\
\times&\sum_{h=1}^{n} (\mathbb P_{n,h}^>+ \mathbb P_{n,h}^< )\left[ \mathfrak C \mathfrak T 1 \right]^{0,(h-1)}|+ {\color{black}\mathcal O_\epsilon(t^{-1})}  , \nonumber
 \end{align}where
 \begin{align}
\mathbb P_{n,h}^>= &    \mathfrak C \mathfrak T_{0,(n)}\cdots   \mathfrak C \mathfrak T_{0,(h)}    {\color{black}\psi_{1,\xi_{ h+1}''}( \xi_{h }'') } (\xi_h''-\xi_{h+1}'') \label{E:---first-reduction-2}\\
\times&\theta(t|\lambda_R'|-|x_{2,h}'|)
\theta(||\xi_h''|- |\xi_{h+1}''||-t^{\color{black}-1+2z}) \nonumber,\\
\mathbb P_{n,h}^<= &  \mathfrak C \mathfrak T_{0,(n)}\cdots\{\ \mathfrak C \mathfrak T_{0,(h+1)}(-2\xi_{h+1}'') \nonumber\\
\times& \theta(t|\lambda_R'|-|x_{2,h+1}'|)   {\color{black}\psi_{1,\xi_{ h+2}''}( \xi_{h+1 }'') }\theta(||\xi_{h+1}''|- |\xi_{h+2}''||-t^{\color{black}-1+2z})
   \nonumber\\
\times& \mathfrak C \mathfrak T_{0,(h)}    \theta(t|\lambda_R'|-|x_{2,h}'|)  \theta(t^{\color{black}-1+2z}-|\xi_h''+\xi_{h+1}''|)  \theta(|\xi_h'' -\xi_{h+1}''|-t^{\color{black}-1+2z})\ \}  .\nonumber 
\end{align}
Here,  for brevity,  when $h=n$, we  identify  
\begin{multline}\label{E:-assertion} 
\qquad\mathfrak C \mathfrak T_{0,(n)}\cdots \mathfrak C \mathfrak T_{0,(h+1)} (-2\xi_{h+1}'')  \theta(t|\lambda_R'|-|x_{2,h+1}'|) \\
 \times {\color{black}\psi_{1,\xi_{ h+2}''}( \xi_{h+1 }'') }\theta(||\xi_{h+1}''|- |\xi_{h+2}''||-t^{\color{black}-1+2z})= -\frac{\lambda_I'}{\pi}.\qquad \end{multline}
\end{lemma}   
\begin{proof}
The proof proceeds by the same argument as in Lemma \ref{L:+2ndReduction-P}.
\end{proof}

\begin{lemma}\label{L:-3ndReduction-u21-o}
 Suppose {\color{black}  the assumption of Theorem \ref{T:u}} holds and  {\color{black}$z$ defined by \eqref{E:maximise}}. As $t\to\infty$,   
\begin{align}
&\sum_{n=1}^\infty \sum_{  h=1}^{ n}   \iint d\overline\lambda'\wedge d\lambda' \ e^{\beta_{n+1}2\pi i tS_0 }\widetilde {  s}_c(\lambda'  )\Xi(\lambda') \theta(|\lambda_R'|-t^{\color{black}-\frac 12-2\epsilon}) \mathbb   P_{n,h}^>   
  \left[ \mathfrak C \mathfrak T 1 \right]^{0,(h-1)}    
  {\color{black}\sim  o_\epsilon(t^{-11/12+\epsilon})}. \nonumber
\end{align} 
\end{lemma}

\begin{proof}

{\color{black}Applying Proposition \ref{P:first-reduction-u12}  and adapting the proof of Theorem \ref{T:-u-2}, it reduces to showing}
\be \label{E:-3rdReduction-(1)-u21-o-sharp} 
   |\iint d\overline\lambda'\wedge d\lambda' \ e^{\beta_{n+1}2\pi i tS_0 }\widetilde {  s}_c(\lambda'  )\Xi(\lambda') \theta(|\lambda_R'|-t^{\color{black}-1-2\epsilon}) \mathbb   P_{n,h}^{>,\sharp}   
  \left[ \mathfrak C \mathfrak T 1 \right]^{0,(h-1)}|   
  {\color{black}\sim  o_\epsilon(t^{-11/12+\epsilon})},
\ee
where
\begin{align}
 {\color{black}\mathbb P_{n,h}^{>,\sharp}}= &    {\color{black}\psi_{r,0}({\lambda'}_R)} \mathfrak C \mathfrak T_{0,(n)}\cdots \mathfrak C \mathfrak T_{0,(h+1)} \psi_{b,b}(\xi_{h+1}'')\mathfrak C \mathfrak T_{0,(h)}     {\color{black}\psi_{1,\xi_{ h+1}''}( \xi_{h }'') }(\xi_h''-\xi_{h+1}'')  \label{E:---first-reduction-2-sharp}\\
\times&\theta(t|\lambda_R'|-|x_{2,h}'|)
\theta(||\xi_h''|- |\xi_{h+1}''||-t^{\color{black}-1+2z}) ,\quad \textit{ if $h<n$,}\nonumber\\
{\color{black}\mathbb P_{n,h}^{>,\sharp}}= &    {\color{black}\psi_{r,0}({\lambda'}_R)} \mathfrak C \mathfrak T_{0,(n)}\cdots \mathfrak C \mathfrak T_{0,(h+1)} {\color{black}\psi_{r,r}(\lambda_I')}\mathfrak C \mathfrak T_{0,(h)}     {\color{black}\psi_{1,\xi_{ h+1}''}( \xi_{h }'') }(\xi_h''-\xi_{h+1}'')  \nonumber\\
\times&\theta(t|\lambda_R'|-|x_{2,h}'|)
\theta(||\xi_h''|- |\xi_{h+1}''||-t^{\color{black}-1+2z}) ,\quad \textit{ if $h=n$.}\nonumber
\end{align}{\color{black}  To this aim, consider the decomposition
\begin{align}
& {\color{black}\psi_{b,b}(\xi_{h+1}'')} \theta(|{\lambda'}_R|-t^{\color{black}-\frac 12-2\epsilon} )\theta(||\xi_h''|- |\xi_{h+1}''||-t^{\color{black}-1+2z})  \label{E:-3ndReduction-2-u21}\\ 
=& {\color{black}\psi_{b,b}(\xi_{h+1}'')}    \theta(|\lambda_R'|-t^{-2z+\epsilon_z}) \theta(||\xi_h''|- |\xi_{h+1}''||-t^{\color{black}-1+2z})\nonumber\\
+& {\color{black}\psi_{b,b}(\xi_{h+1}'')}   \theta(|\lambda_R'|-t^{\color{black}-\frac 12-2\epsilon}) \theta(t^{-2z+\epsilon_z}-|\lambda_R'|) \theta(||\xi_h''|- |\xi_{h+1}''||-t^{\color{black}-1+2z}) \nonumber
\end{align} with $0<\epsilon_z\ll z$.

From $  b=  (  r^2+{\lambda'}_R^2)^{1/2}/{2\pi}$, we can prove the $L^1(d\xi_h'')$-norm of $ \mathcal F^{(h)} $ on the corresponding domains for the first term on the right hand side of \eqref{E:-3ndReduction-2-u21} is less than $o(t^{-1})$. It then reduces to proving:  
\begin{align}
&   |\iint d\overline\lambda'\wedge d\lambda'  \ e^{\beta_{n+1}2\pi i tS_0 } \widetilde {  s}_c(\lambda'  )   {\color{black} \theta(|\lambda'_I|-\frac r{10})}  \theta(|\lambda_R'|-t^{\color{black}-\frac 12-2\epsilon}) \theta(t^{\color{black}-2z+\epsilon_z}-|\lambda_R'|) \nonumber\\
\times&  \mathfrak C \mathfrak T_{0,(n)}\cdots \mathfrak C\mathfrak T_{0,(h+1)}   \psi_{t^{\color{black}-y+\epsilon_y},b}(\xi_{h+1}'')\mathfrak C\mathfrak T_{0,(h)}\theta(t|\lambda_R'|-|x_{2,h }'|)\nonumber\\
\times& \theta(||\xi_h''|- |\xi_{h+1}''||-t^{\color{black}-1+2z})\left[\mathfrak C\mathfrak T1\right]^{0,(h-1)}| \nonumber\\
+&    |\iint d\overline\lambda'\wedge d\lambda'  \  e^{\beta_{n+1}2\pi i tS_0 } \widetilde {  s}_c(\lambda'  )  {\color{black}  \theta(|\lambda'_I|-\frac r{10})} \theta(|\lambda_R'|-t^{\color{black}-\frac 12-2\epsilon}) \theta(t^{\color{black}-2z+\epsilon_z}-|\lambda_R'|)\nonumber\\
\times& \mathfrak C \mathfrak T_{0,(n)}\cdots \mathfrak C\mathfrak T_{0,(h+1)} {\color{black}\psi_{b,b}(\xi_{h+1}'')}  (1-\psi_{t^{\color{black}-y+\epsilon_y},b}(\xi_{h+1}'')) \mathfrak C\mathfrak T_{0,(h)}\psi_{t^{\color{black}-y-\epsilon_y},b}(\xi_h'')\theta(t|\lambda_R'|-|x_{2 ,h}'|)\nonumber\\
\times& \theta(||\xi_h''|- |\xi_{h+1}''||-t^{\color{black}-1+2z})\left[\mathfrak C\mathfrak T1\right]^{0,(h-1)} |\nonumber\\
+&    |\iint d\overline\lambda'\wedge d\lambda'  \ e^{\beta_{n+1}2\pi i tS_0 } \widetilde {  s}_c(\lambda'  ) {\color{black}   \theta(|\lambda'_I|-\frac r{10})} \theta(|\lambda_R'|-t^{\color{black}-\frac 12-2\epsilon}) \theta(t^{\color{black}-2z+\epsilon_z}-|\lambda_R'|)  \nonumber\\
\times& \mathfrak C \mathfrak T_{0,(n)}\cdots \mathfrak C\mathfrak T_{0,(h+1)} {\color{black}\psi_{b,b}(\xi_{h+1}'')}  (1-\psi_{t^{\color{black}-y+\epsilon_y},b}(\xi_{h+1}''))\mathfrak C\mathfrak T_{0,(n)}(1-\psi_{t^{\color{black}-y-\epsilon_y},b}(\xi_h''))\theta(t|\lambda_R'|-|x_{2,h}'|) \nonumber\\
\times& \theta(||\xi_h''|- |\xi_{h+1}''||-t^{\color{black}-1+2z})\left[\mathfrak C\mathfrak T1\right]^{0,(h-1)}| \nonumber\\
\equiv &I_1+I_2+I_3{\color{black}\sim  o_\epsilon(t^{-11/12+\epsilon})},\nonumber
\end{align}where $y$ is defined by  \eqref{E:maximise} and $0<\epsilon_y\ll y$.}

{\color{black}Proceeding as in the proof of Theorem \ref{T:-u-2}, we obtain
\be\label{E:-2ndReduction-5-u21}
|I_1|,\ |I_3|\sim  o_\epsilon(t^{-11/12+\epsilon}),\qquad 
|I_2|\sim  o(t^{-1}).
\ee}
\end{proof}
\begin{lemma}\label{L:-3ndReduction-u21}
 Suppose {\color{black}  the assumption of Theorem \ref{T:u}} holds  and  {\color{black}$z$ defined by \eqref{E:maximise}}. As $t\to\infty$,   
\[
 \sum_{n=1}^\infty \sum_{  h=1}^{ n}  |\iint d\overline\lambda'\wedge d\lambda' \  e^{\beta_{n+1}2\pi i tS_0 }\widetilde {  s}_c(\lambda'  )\Xi(\lambda')\theta(|\lambda_R'|-t^{\color{black}-\frac12-2\epsilon})  \mathbb P_{n,h}^<     \left[ \mathfrak C \mathfrak T 1 \right]^{0,(h-1)}| {\color{black}\sim  o_\epsilon(t^{-11/12+\epsilon})}.  \]
\end{lemma}

\begin{proof}From  \eqref{E:confine--u20}, \eqref{E:-3ndReduction-2-u21} and a similar argument as argument used in  {\color{black}  Lemma \ref{L:3-reduction-u20},} to prove the lemma, it reduces to justifying
\begin{align}
 & |\iint d\overline\lambda'\wedge d\lambda' \ e^{\beta_{n+1}2\pi i tS_0 }\widetilde {  s}_c(\lambda'  ) \Xi(\lambda')\theta(|\lambda_R'|-t^{\color{black}-\frac12-2\epsilon}) \theta(t^{\color{black}-2z+\epsilon_z}-|\lambda_R'|)\label{E:-3ndReduction-4-u21}\\
 \times& \mathfrak C \mathfrak T_{0,(n)}\cdots\mathfrak C \mathfrak T_{0,(h+1)}    (-2\xi_{h+1}'') {\color{black}\psi_{r,0}(\lambda_R')}\psi_{b,b}(\xi_{h+1}'')  \theta(t|\lambda_R'|-|x_{2,h+1}'|)\nonumber\\
\times&  \theta(||\xi_{h+1}''|- |\xi_{h+2}''||-t^{\color{black}-1+2z})  
   \nonumber\\
\times& \mathfrak C \mathfrak T_{0,(h)}    \theta(t|\lambda_R'|-|x_{2,h}'|)  \theta(t^{\color{black}-1+2z}-|\xi_h''+\xi_{h+1}''|)\theta(|\xi_h''-\xi_{h+1}''|-t^{\color{black}-1+2z})\left[ \mathfrak C \mathfrak T 1 \right]^{0,(h-1)}| \nonumber\\
{\color{black}\sim }& {\color{black}o_\epsilon(t^{-11/12+\epsilon})}. \nonumber
\end{align} 

To this aim, via decomposing $-2\xi_{h+1}''=-2(\xi_{h+1}''-\xi_{h+2}'')+2 \xi_{h+2}''$, applying  Lemma \ref{L:-3ndReduction-u21-o}, an induction, and Theorem \ref{T:-u-2}, we have 
\be\label{E:-3ndReduction-5-u21}
\textit{LHS of \eqref{E:-3ndReduction-4-u21} } {\color{black}\sim  o_\epsilon(t^{-11/12+\epsilon})}.
\ee
  
\end{proof}

 Therefore, combining    Lemmas \ref{L:-2ndReduction-P}-\ref{L:-3ndReduction-u21},  we obtain: 
\begin{theorem}\label{T:-u-2-u21}
 Assume {\color{black}  the assumption of Theorem \ref{T:u}} holds. As $t\to +\infty$,
\[
 u_{2,1}{\color{black}\sim  o_\epsilon(t^{-11/12+\epsilon})}.
\] 
\end{theorem}

\begin{appendix}
\begin{section}{A technical lemma}
 
We provide one key estimate used in the derivation of new representation formulas. 
\begin{lemma}\label{L:I-app} Suppose \eqref{E:intro-ID} is true. Let $\mathfrak m_0(x_1,x_2)$ be defined by \eqref{E:M}. For $j=0,1$,
\begin{equation}\label{E:wickerhauser}
|\partial_{x_1}^j(\mathfrak m_0-1)|_{L^\infty}\le|\left(  \partial_{x_1}^j(\mathfrak m_0(x_1,x_2;\overline{\zeta(\xi)})-1) \right)^{\wedge_{x_1,x_2}}|_{L^1(d\xi_1 d\xi_2)}\le C .
\end{equation}
\end{lemma}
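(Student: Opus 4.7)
The first inequality is immediate Fourier inversion. Setting
$F(\xi_1,\xi_2):=\bigl(m_0(\,\cdot\,;\overline{\zeta(\xi)})-1\bigr)^{\wedge_{x_1,x_2}}(\xi_1,\xi_2)$,
the defining formula \eqref{E:M} asserts that $\mathfrak m_0-1$ is the inverse $(x_1,x_2)$-Fourier transform of $F$, so
$\partial_{x_1}^j(\mathfrak m_0-1)(x_1,x_2)=\iint (2\pi i\xi_1)^j F(\xi)\,e^{2\pi i(x_1\xi_1+x_2\xi_2)}\,d\xi_1 d\xi_2$,
and the triangle inequality under $d\xi_1 d\xi_2$ gives $|\partial_{x_1}^j(\mathfrak m_0-1)|_{L^\infty}\le|(2\pi\xi_1)^jF|_{L^1(d\xi_1 d\xi_2)}$, which is the middle expression in the statement.

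For the $L^1(d\xi_1 d\xi_2)$-bound on $(2\pi\xi_1)^jF$, the plan is to combine the integral equation $m_0-1=\mathcal{CT}m_0$ of \eqref{E:intro-CIE-t} with the forward-scattering identity $s_c(\zeta)=\frac{\sgn\zeta_I}{2\pi i}\,[u_0 m_0(\,\cdot\,;\zeta)]^\wedge(\xi(\zeta))$ from \eqref{E:intro-SD}. Substituting the latter into the kernel of $\mathcal{CT}$, changing variables via $d\bar\zeta\wedge d\zeta=(i\pi^2/|\xi_1'|)\,d\xi_1'd\xi_2'$, swapping orders of integration by Fubini, and evaluating the resulting inner contour integral in $\xi_2'$ by the residue theorem (essentially the calculation producing \eqref{E:CT-nabla}, adapted to the Fourier-transformed version of the integral equation) recasts
\[
F(\xi)=\iint K(\xi;x_1',x_2')\,[u_0 m_0](x_1',x_2';\overline{\zeta(\xi)})\,dx_1'dx_2'
\]
for an explicit residue kernel $K$. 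Minkowski's inequality then reduces the $L^1(d\xi)$-bound to uniform-in-$(x_1',x_2')$ control of $|K|_{L^1(d\xi_1 d\xi_2)}$, and together with $|u_0|_{L^1\cap L^\infty}<\epsilon_0$ and the uniform bound $|m_0|_{L^\infty}\le C$ from Lemma \ref{L:wickerhauser}, yields the desired estimate. For $j=1$, the extra $\xi_1$-multiplier is transferred onto $u_0 m_0$ by integration by parts in $x_1'$, producing $\partial_{x_1}(u_0 m_0)=(\partial_{x_1}u_0)m_0+u_0(\partial_{x_1}m_0)$ which remains in $L^1\cap L^\infty$ by $u_0\in\mathfrak M^{0,q}$ with $q\ge 1$ and the $j=1$ case of Lemma \ref{L:wickerhauser}.

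The principal technical obstacle is an apparent $\xi_1^{-2}$ singularity at $\xi_1=0$. A naive pointwise Fourier inversion of the Lax equation would give $F(\xi)\propto(u_0 m_0)^\wedge(\xi;\overline{\zeta(\xi)})/p_{\overline{\zeta(\xi)}}(\xi)$, and by \eqref{E:p} one verifies $p_{\overline{\zeta(\xi)}}(\xi)=-8\pi^2\xi_1^2$, which is not locally integrable. The role of the residue-theorem step is precisely to show that this singularity is spurious: it reflects the fact that $m_0-1\notin L^1_{x_1,x_2}$, so pointwise Fourier inversion of the Lax symbol is invalid, while the correct Jost Green's function—being holomorphic in $\xi_2'$ for $\xi_1'\ne 0$ (the same holomorphy used in the proof of Lemma \ref{L:CIO})—produces upon contour evaluation a regular kernel $K$ without the fictitious singularity. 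Verifying $|K|_{L^1(d\xi_1 d\xi_2)}\le C$ uniformly in $(x_1',x_2')$ is the heart of the argument.
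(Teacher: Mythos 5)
Your first inequality (Fourier inversion plus the triangle inequality) is fine and is what the paper intends. For the main $L^1(d\xi_1 d\xi_2)$ estimate, however, your route diverges from the paper's and has genuine gaps. The paper never forms the diagonal quotient $\widehat{u_0m_0}/p_{\overline{\zeta(\xi)}}$ and never invokes the residue theorem here. Instead it Fourier-transforms the integral equation \eqref{E:intro-CIE-t} at \emph{fixed} $\lambda$, obtaining \eqref{E:app-m}: the free term $[\mathcal CT(2\pi i\xi_1)^j]^{\wedge}=(2\pi i\xi_1)^js_c/p_\lambda$ is in $L^1(d\xi)$ uniformly in $\lambda$ by the $L^1/L^2$ bounds on $1/p_\lambda$ in \eqref{E:p} together with \eqref{E:intro-s-c-ana-c-0}, and the remaining term is $\mathbf R_{(2\pi i\xi_1)^js_c}\widehat f$, an integral operator in the spectral variable whose $L^1(d\xi)\to L^1(d\xi)$ norm is $\le C\epsilon_0$ by Minkowski. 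The Neumann series then gives $\sup_\lambda|[\partial_{x_1}^j(m_0-1)]^{\wedge}(\cdot;\lambda)|_{L^1}\le C\epsilon_0$, and only at the very end is the diagonal $\lambda=\overline{\zeta(\xi)}$ taken, bounded by this supremum. In particular the $\xi_1^{-2}$ singularity you identify as the principal obstacle is sidestepped entirely: for fixed $\lambda$ the kernel $1/p_\lambda$ is locally integrable, and no contour deformation is needed to "remove" anything.

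The concrete gaps in your proposal are these. First, circularity: the residue-theorem representation \eqref{E:CT-nabla}--\eqref{E:CT1-prime} that you propose to adapt is itself derived in Lemma \ref{L:CIO} \emph{using} the estimate \eqref{E:M}, i.e., using the present lemma, to make sense of $[u_0\mathfrak m_0]\in L^1(dx_1'dx_2')$; the paper explicitly defers the proof of \eqref{E:M} to this appendix for that reason. To avoid the circle you would have to keep $m_0(\cdot;\overline\zeta)$ (not $\mathfrak m_0$) inside the $\zeta$-integral, and then the residue step requires holomorphy in $\xi_2'$ of $\widehat{m_0(\cdot;\overline\zeta)}(\xi_0-\xi(\zeta))$, a composite dependence on the contour variable through both the spectral parameter and the evaluation point that is nowhere established and is not "essentially the calculation producing \eqref{E:CT-nabla}", where the only non-explicit factor is $s_c$. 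Second, the estimate you defer, $|K(\cdot;x_1',x_2')|_{L^1(d\xi_1 d\xi_2)}\le C$ uniformly in $(x_1',x_2')$, is asserted but not proved, and nothing in Lemma \ref{L:CIO} supports it: the analogous object produced there, $\int d\xi_1''\,e^{2\pi it\mathfrak G^\sharp}\mathcal F$, is only shown to be uniformly \emph{bounded} in the output spectral variable, with no decay in $\xi$, so integrability in $d\xi_1 d\xi_2$ is a strictly stronger claim. Since you yourself call this "the heart of the argument," the proof is incomplete at its central step; the paper's contraction argument in $L^\infty_\lambda L^1_\xi$ is the mechanism that actually closes it.
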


\begin{proof}
We will adpt the proof given in \cite{W87}. From \eqref{E:intro-CIE-t}, for $j=0,1$,
\begin{equation}\label{E:app-m}
\begin{split}
\left[  \partial_{x_1}^j(  m_0(x_1,x_2;\lambda)-1) \right]^{\wedge_{x_1,x_2}}(\xi;\lambda)
=&\left[ \mathcal CT(2\pi i \xi_1)^j (  m_0(x_1,x_2;\lambda)-1)\right]^{\wedge_{x_1,x_2}}(\xi;\lambda)\\
&+\left[ \mathcal CT(2\pi i \xi_1)^j \right]^{\wedge_{x_1,x_2}}(\xi;\lambda).
\end{split}
\end{equation}

Applying the Fourier theory and \eqref{E:p} and Theorem \ref{T:cauchy-0}, we obtain
\begin{equation}\label{E:app-fourier-inversion}
\begin{split}
|\left[ \mathcal CT(2\pi i \xi_1)^j \right]^{\wedge_{x_1,x_2}}(\xi;\lambda)|_{L^1(d\xi_1 d\xi_2)}=&|\frac{(2\pi i \xi_1)^j s_c}{p_\lambda(\xi)}   |_{L^1(d\xi_1 d\xi_2)}\le C|    \xi_1 ^j s_c   |_{L^\infty \cap L^2(d\xi_1 d\xi_2)}\\
\le &C\sum_{|l|\le 2+j}|\partial_x^lu_0|_{L^1\cap L^2},
\end{split}
\end{equation}and
\begin{align}
&\left[ \mathcal CT(2\pi i \xi_1)^j f\right]^{\wedge_{x_1,x_2}}(\xi_0;\lambda)\label{E:app-m-1}\\
=&\iint\left[\frac{1}{2\pi i} \iint\frac{(2\pi i \xi_1)^j s_c(\zeta)f(x_1,x_2;\overline\zeta)e^{2\pi i (x_1\xi_{0,1}+x_2\xi_{0,2} )}}{\lambda-\zeta}d\overline\zeta \wedge d\zeta\right]dx_1 dx_2\nonumber\\
=&\frac{1}{2\pi i} \iint\frac{(2\pi i \xi_1)^j s_c(\zeta)}{\lambda-\zeta} \widehat f(\xi_1-\xi_{0,1},\xi_2-\xi_{0,2};\overline\zeta) d\overline\zeta \wedge d\zeta\equiv \mathbf R_{(2\pi i \xi_1)^j s_c}\widehat f(\xi_0;\lambda).\nonumber
\end{align}

In view of \eqref{E:p}, Theorem \ref{T:cauchy-0}, and the Minkowski inequality,
\begin{equation}\label{E:app-m-2}
|\mathbf R_{(2\pi i \xi_1)^j s_c}\widehat f(\xi _0;\lambda)|_{L^1(d\xi_{0,1} d\xi_{0,2})}\le C|\widehat f|_{L^1(d\xi_1 d\xi_2)}.
\end{equation}

Combining \eqref{E:app-m}-\eqref{E:app-m-2}, and the Minkowski inequality, we obtain
\begin{equation}\label{E:app-m-3}
| [  \partial_{x_1}^j(  m_0(x_1,x_2;\lambda)-1)  ]^{\wedge_{x_1,x_2}}(\xi;\lambda)|_{L^1(d\xi_1 d\xi_2)}\le C|\frac{  \xi_1^j s_c}{p_\lambda }   |_{L^1(d\xi_1 d\xi_2)}\le C \sum_{|l|\le 2+j}|\partial_x^lu_0|_{L^1\cap L^2}.
\end{equation}

Using the definition of Riemann sums,
\begin{align*}
&| [  \partial_{x_1}^j(  m_0(x_1,x_2;\overline{\zeta(\xi)})-1)  ]^{\wedge_{x_1,x_2}} |_{L^1(d\xi_1 d\xi_2)}\\
\le &\sup_\lambda | [  \partial_{x_1}^j(  m_0(x_1,x_2;\lambda)-1)  ]^{\wedge_{x_1,x_2}}(\xi;\lambda)|_{L^1(d\xi_1 d\xi_2)}.
\end{align*}Therefore, \eqref{E:wickerhauser} is justified.

\end{proof}

\end{section}

\begin{section}{List of Symbols}\label{App}

{\small\begin{longtable}{|lrc|clr|}
\caption{List of Symbols}\\
\hline
Notation and Definition &Page &&& Notation and Definition &Page\\
\hline
Coordinates &&&& $\quad \widetilde f(\zeta')$, &5\\
$\quad x=(x_1,x_2,x_3)$, &4 &&& $\quad \mathbb S_0(t_1,t_2,\zeta),\ S_0(a;\zeta')$, &5\\ 
$\quad \partial_x^l=\partial_{x_1}^{l_1}\partial_{x_2}^{l_2}\partial_{x_3}^{l_3},\ |l|=l_1+l_2+l_3$, &4 &&& $\quad \nabla S_0(a;\zeta'),\ \Delta S_0(a;\zeta')$, &9\\
$\quad \xi=(\xi_1,\xi_2)$, &4 &&& $\quad a$, &5\\
$\quad C,\ \epsilon_0,\ \delta $  &4,2 &&&  $\quad \pm r$ stationary point for $S_0(\zeta')$ &\qquad  6 \\ 
&&&& &\\
Potentials (KPII solutions) &  &&& Special functions &\\ 
$\quad u(x),\  u_0(x_1,x_2)$, &2  &&& $\quad $Airy function $Ai(z)$, &11\\ 
$\quad u_1(x),\  u_{1,1}(x),\ u_{1,2}(x)$, &2,8 &&& $\quad $Heaviside function $\theta(s)$, &7\\ 
 $\quad u_{2,0}(x),\  u_{2,1}(x)$  &2 &&& $\quad \mathfrak M^{p,q}$, &4\\ 
  &  &&& $\quad \psi_{r,w_0}(s),\ \chi(\lambda')$, &8\\
Inverse scattering theory &  &&&   $\quad  \Xi(\lambda')$  & 23 \\ 

$\quad \mathcal S,\ s_c,\ \mathcal C,\ T$   &4,5 &&&   & \\ 
& &&&CIO (new representation)&\\ 
Fourier transform  & &&& $\quad \mathfrak m_0(x_1',x_2'),\ x_{1,n}',\ x_{2,n}'$, &13,16\\ 
$\quad \widehat f(\xi)$,  &4&&& $\quad \xi_1'',\ \xi_n'',\ \xi_h'',\ \xi_{n+1}''$, &13,16,28\\
$\quad \phi^{\wedge_{\zeta_R'}}(\zeta_R'),\ \phi^{\wedge_{\zeta_I'}}(\zeta_I')$,  &10  &&& $\quad  [\mathfrak C\mathfrak T  ]^{0,(n)},    [\mathfrak C\mathfrak T  ]^{1,(n)},   \mathfrak C\mathfrak T _{0,(n)},   \mathfrak C\mathfrak T _{1,(n)}$, &13,16,27,28\\
&&&& $\quad \mathfrak S(a;\lambda_R';\xi_1''),\, \mathfrak S^\sharp(a,t;x_1',x_2';\lambda_R';\xi_1'')$, &13\\
Stationary theory & &&& $\quad \mathcal F(t;\lambda';x_2'; \xi_1''),\ \mathcal F^{(n)}(t;\lambda';x_{2,n}'; \xi_n'')$, &13,16\\
 $\quad  (t_1,t_2,t)$, &5 &&& $\quad \beta_n$, &16\\
 $\quad \zeta=\zeta_R+i\zeta_I,\ \zeta'=\zeta'_R+i\zeta'_I$, &5 &&& $\quad \pm b$ stationary points for $\mathfrak G(\xi_n'')$, &18\\
  $\quad (\xi_1',\xi_2'),\ \partial_{\zeta_R'},\ \partial_{\zeta_I'}$, &5 &&& $\quad P_{n,h}^>,\ P_{n,h}^<,\ \mathbb P_{n,h}^>,\ \mathbb P_{n,h}^<$  &29,34\\
  \hline
\end{longtable}}

\end{section}

\end{appendix}

\end{document}